\newcommand{\vertiii}[1]{{\left\vert\kern-0.25ex\left\vert\kern-0.25ex\left\vert #1 
    \right\vert\kern-0.25ex\right\vert\kern-0.25ex\right\vert}}
\newtheorem*{theorem*}{Main Theorem}
\newtheorem*{theorem**}{Theorem}
\newtheorem{theorem}{Theorem}
\newtheorem{lem}{Lemma}
\newtheorem*{conj}{Conjecture}
\newtheorem{prop}[theorem]{Proposition}
\newtheorem{coro}{Corollary}
\newtheorem{rem}[theorem]{Remark}
\theoremstyle{definition}
\newtheorem*{ques}{Question}
\newtheorem*{Rep}{Reparametrization Lemma}
\newcommand{\Jac}{\mathop{\mathrm{Jac}}}
\newcommand{\diam}{\mathop{\mathrm{diam}}}
\newcommand{\Leb}{\mathop{\mathrm{Leb}}}
\newcommand{\N}{\mathbb{N}}
\theoremstyle{remark}
\newcommand{\G}{\llbracket}
\newcommand{\R}{\rrbracket}
\numberwithin{equation}{section}
\begin{document}
\begin{large}
\title{SRB measures for $C^\infty$ surface diffeomorphisms}

%    Information for first author
\author{David Burguet}
%    Address of record for the research reported here
\address{CNRS, Sorbonne Universite, LPSM, 75005 Paris, France}
  \email{david.burguet@upmc.fr}  
\subjclass[2010]{Primary 37C40, 37D25}

\date{October 2021}

%\dedicatory{This paper is dedicated to our advisors.}

\begin{abstract}A $C^\infty$ surface diffeomorphism admits a SRB measure if and only if the set $\{ x, \ \limsup_n\frac{1}{n}\log \|d_xf^n\|>0\}$  has positive Lebesgue measure. Moreover the basins of the ergodic SRB measures are covering this set Lebesgue almost everywhere. We also obtain similar results for $C^r$ surface diffeomorphisms with  $+\infty>r>1$.
\end{abstract}
\keywords{}

\maketitle
\tableofcontents

%\tableofcontents
\section{Introduction}One fundamental problem in dynamics consists in 
understanding the statistical behaviour of the system. Given a topological system $(X,f)$ we are more precisely interesting in  the asymptotic distribution of the empirical measures $\left(\frac{1}{n}\sum_{k=0}^{n-1}\delta_{f^kx}\right)_n$ for typical points $x$ with respect to a reference measure.  In the setting of differentiable dynamical systems the natural reference measure to consider is the Lebesgue measure on the manifold. 

The basin of a $f$-invariant measure $\mu$ is the set $\mathcal B(\mu)$ of points whose empirical measures are converging to $\mu$ in the weak-$*$ topology. By Birkhoff's ergodic theorem the basin of an  ergodic  measure $\mu$ has full $\mu$-measure.  An invariant measure is said physical when its basin has positive Lebesgue measure. We may  wonder when such measures exist and then study their basins. 

In the works of Y. Sinai, D. Ruelle and R. Bowen \cite{sin,bow,rue} these questions have been successfully solved for uniformly hyperbolic systems.
A SRB measure of a $C^{1+}$ system  is an invariant probability measure with at least one positive Lyapunov exponent almost everywhere, which has absolutely continuous conditional measures on unstable manifolds  \cite{young}.  Physical measures may neither be SRB measures nor sinks (as in the famous figure-eight attractor), however hyperbolic ergodic SRB measures are physical measures. For uniformly hyperbolic systems, there is a finite number of such measures and their basins cover a full Lebesgue subset of the manifold.
 Beyond the uniformly hyperbolic case such a picture is also known for large classes of partially hyperbolic systems  \cite{BV,ABV,ADLP}.   Corresponding results have been established for unimodal maps with negative Schwartzian derivative \cite{kel}. SRB measures have  been also deeply  investigated for parameter families such as the quadratic family  and Henon maps \cite{jak, BC,BY,VB}.
 In his celebrated ICM's talk, M. Viana conjectured that a surface diffeomorphism admits a SRB measure, whenever the set of points with positive Lyapunov exponent has positive Lebesgue measure.
 In recent works some weaker versions of the conjecture (with some additional assumptions of recurrence  and Lyapunov regularity) have been proved \cite{cli,BO}. Finally we mention that in the present context of $C^\infty$ surface diffeomorphims  J. Buzzi, S. Crovisier, O. Sarig have also recently shown   the existence of  a SRB measure when the set of points with a positive Lyapunov exponent has positive Lebesgue measure \cite{BCS3} (Corollary \ref{coco}).  \\
 
In this paper we define a general entropic approach  to build SRB measures, which we apply to prove Viana's conjecture for $C^\infty$ surface diffeomorphisms. We strongly believe the same approach may be used to recover the existence of SRB measures  for weakly mostly expanding partially hyperbolic systems \cite{ADLP} and to give another proof of Ben Ovadia's criterion for $C^{1+}$ diffeomorphisms in any dimension  \cite{BO}. \\

%We also point out  the key role of hyperbolic times in our proof, which correspond to times where  the distorsion is bounded  in the similar way to that  of \cite{ABV} for mostly expanding partially hyperbolic systems.

We state now the main results of our paper.   Let $(M,\|\cdot \|)$ be a compact Riemannian surface and let $\Leb$ be a volume form on  $M$, called  Lebesgue measure. 
We consider  a $C^\infty$ surface diffeomorphism  $f:M\circlearrowleft$. 
The maximal Lyapunov exponent at $x\in M$ is given by $\chi(x)=\limsup_n\frac{1}{n}\log \|d_xf^n\|.$
When  $\mu$  is a $f$-invariant  probability measure, we let 
$\chi(\mu)=\int \chi(x)\, d\mu(x).$
For   two Borel subsets $A$ and $B$ of $M$ we write  $A\stackrel{o}{\subset} B$ (resp. $A\stackrel{o}{=} B$) when we have $\Leb(A\setminus B)=0$ (resp. $\Leb(A\Delta B)=0$).

\begin{theorem} 
Let $f:M\circlearrowleft$ be a $C^\infty$ surface diffeomorphism. There are countably many ergodic SRB measures $(\mu_i)_{i\in I}$, such that we have with $\Lambda=\{\chi(\mu_i), \ i\in I\}\subset \mathbb R_{>0}$:
\begin{itemize}
\item   $\{\chi>0\}\stackrel{o}{=} \{\chi\in \Lambda\}$, 
\item  $\{\chi=\lambda\}\stackrel{o}{\subset}\bigcup_{i, \chi(\mu_i)=\lambda}\mathcal {B}(\mu_i) $ for all $\lambda\in \Lambda$.
\end{itemize} 

\end{theorem}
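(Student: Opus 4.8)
\emph{Overall strategy.} I would isolate a \emph{production lemma} --- if $A\subset M$ is Borel with $\Leb(A)>0$ and $\chi(x)\ge\lambda>0$ for every $x\in A$, then there is an ergodic SRB measure $\mu$ with $\Leb(A\cap\mathcal B(\mu))>0$ (and with $\chi(\mu)$ bounded below in terms of $\lambda$) --- and deduce the theorem from it by a Vitali-type exhaustion. The bridge is the classical surface picture: for an ergodic $\nu$ the operator norm realizes the top exponent, so $\chi(\nu)=\lambda_1(\nu)$, Ruelle's inequality reads $h_\nu(f)\le\lambda_1(\nu)^++\lambda_2(\nu)^+$, and by Ledrappier's theorem equality holds exactly when $\nu$ is SRB (given $\lambda_1(\nu)>0$). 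So the content is to convert exponential growth of $\|d_xf^n\|$ on a positive-measure set into metric entropy, in the right quantity.

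\emph{Production lemma.} Fix a two-dimensional Lebesgue density point $x_0$ of $A$; after a pigeonhole one may assume a definite $\Leb$-fraction of $A$ near $x_0$ is stretched in its most-expanded direction at rate $\ge\lambda-\epsilon$ along one common sequence of times $n_j$. Take a small ball $B\subset A$ about $x_0$, foliate it by $C^\infty$ arcs, and let $\mu$ be a weak-$*$ limit of $\frac1{n_j}\sum_{k<n_j}f^k_*\big(\Leb|_B/\Leb(B)\big)$; it is $f$-invariant. By Fubini a definite fraction of the leaves $\ell$ satisfy $\mathrm{length}(f^{n_j}(\ell))\ge e^{(\lambda-\epsilon)n_j}$ on a positive-length sub-arc (extracting this from the operator-norm growth is already delicate, since $\chi$ is only a $\limsup$ and carries no direction). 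The $C^\infty$ Yomdin--Gromov reparametrization of the iterated curves $f^n(\ell)$ and patches $f^n(B)$ into $e^{o(n)}$ pieces of uniformly bounded $C^\infty$ size then controls their folding at small scales --- this is exactly where $C^\infty$ (and the vanishing of the tail entropy) enters, an error $\sim R(\ell)/r$ appearing in the $C^r$ case --- and thereby turns length and volume growth into separated orbits seen by $\mu$: via Katok's formula $h_\mu(f)\ge\limsup_j\frac1{n_j}\log\mathrm{length}(f^{n_j}(\ell))\ge\lambda$, so $\mu$ is nontrivial with $\lambda_1(\mu)>0$, and a refinement of the same scheme tracking the actual exponential growth of $\mathrm{length}(f^n(\ell))$ and $\mathrm{vol}(f^n(B))$ along the orbit gives $h_\mu(f)\ge\lambda_1(\mu)^++\lambda_2(\mu)^+$; with Ruelle's inequality this is an equality on the positive-entropy ergodic components, so such a component $\mu$ is an ergodic SRB measure with $\chi(\mu)\ge\lambda$. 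Finally, the stretching makes a positive-measure family of arcs $\ell\subset B$ admissible u-curves for the Pesin unstable lamination of $\mu$; by absolute continuity of that lamination $\Leb$-a.e.\ point of each such $\ell$ equidistributes to $\mu$, i.e.\ lies in $\mathcal B(\mu)$, and as $\mathcal B(\mu)$ is $f$-invariant, Fubini over the leaves gives $\Leb(B\cap\mathcal B(\mu))>0$, hence $\Leb(A\cap\mathcal B(\mu))>0$.

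\emph{Exhaustion.} Distinct ergodic measures have disjoint basins, so the ergodic SRB measures with basins of positive Lebesgue measure form an at most countable family $(\mu_i)_{i\in I}$; put $U=\bigcup_i\mathcal B(\mu_i)$ and $\Lambda=\{\chi(\mu_i):i\in I\}\subset\mathbb R_{>0}$. By Pesin theory $\Leb$-a.e.\ point of $\mathcal B(\mu_i)$ is Lyapunov regular with $\chi=\lambda_1(\mu_i)=\chi(\mu_i)$, whence $\mathcal B(\mu_i)\stackrel{o}{\subset}\{\chi=\chi(\mu_i)\}$ and $U\stackrel{o}{\subset}\{\chi\in\Lambda\}$. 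If $\{\chi>0\}\stackrel{o}{\subset}U$ failed, there would be $\lambda>0$ with $\Leb(\{\chi>\lambda\}\setminus U)>0$; the production lemma applied to $A=\{\chi>\lambda\}\setminus U$ gives an ergodic SRB $\mu$ with $\Leb(A\cap\mathcal B(\mu))>0$, hence $\Leb(\mathcal B(\mu))>0$, hence $\mu=\mu_i$ for some $i$, hence $A\cap\mathcal B(\mu)\subset A\cap U=\varnothing$ --- a contradiction. Therefore $\{\chi>0\}\stackrel{o}{\subset}U\stackrel{o}{\subset}\{\chi\in\Lambda\}\subset\{\chi>0\}$, which is the first bullet; and for the second, $\Leb$-a.e.\ point of $\{\chi=\lambda\}$ lies in $U$, hence in some $\mathcal B(\mu_i)$ with necessarily $\chi(\mu_i)=\lambda$. (Alternatively one may organize $(\mu_i)_i$ and their basins through the countable Markov coding of Buzzi--Crovisier--Sarig.)

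\emph{Main obstacle.} All the difficulty sits in the production lemma. Two points are the crux: first, manufacturing genuine exponential stretching of a $C^\infty$ arc on a definite portion of its length, along one fixed sequence of times, out of the purely asymptotic and direction-free hypothesis $\limsup_n\frac1n\log\|d_xf^n\|\ge\lambda$ --- with no cocycle regularity and no a priori Lyapunov-regular points; and second, the bookkeeping that converts the subexponential Yomdin count into an entropy \emph{lower} bound for the \emph{limit} measure $\mu$ matching $\lambda_1(\mu)^++\lambda_2(\mu)^+$ --- rather than merely a statement about topological entropy along the orbit of $B$ --- followed by the localization of the basin of the emerging SRB component back inside $A$. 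In the $C^r$ case ($1<r<\infty$) the reparametrization error $\sim R(\ell)/r$ degrades the estimates, so the construction only produces SRB measures with $\chi(\mu)\gtrsim\lambda-C/r$; this still suffices to run the exhaustion once $r$ is large relative to the scales involved.
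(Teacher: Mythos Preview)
Your high-level architecture --- a production lemma plus an exhaustion by basins --- matches the paper's, and you have correctly located the entire difficulty in the production step. But the specific mechanism you propose for that step does not work, and the paper's own history explains why. Your sketch takes standard Ces\`aro averages $\frac{1}{n_j}\sum_{k<n_j}f^k_*\Leb_B$ along a fixed subsequence, and hopes that Yomdin reparametrization plus ``Katok's formula'' converts length growth of $f^{n_j}\ell$ into $h_\mu\ge\lambda_1(\mu)$. The obstruction is that the times $n_j$ at which $\|d_xf^{n_j}(v_x)\|$ is large need \emph{not} be times at which the geometry of $f^{n_j}\ell$ near $f^{n_j}x$ is bounded; without that control the Yomdin count blows up and gives no entropy lower bound for the limit measure. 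The paper's ``Comment'' at the end of the introduction records exactly this: an earlier version used hyperbolic times of the derivative cocycle and claimed bounded geometry there, and that claim is \emph{false} in general (it only works when $\chi(x)>R(f)/2$). So the passage ``via Katok's formula $h_\mu(f)\ge\limsup_j\frac{1}{n_j}\log\mathrm{length}(f^{n_j}\ell)\ge\lambda$'' is the step that fails.

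What the paper does instead is substantially more delicate. It lifts everything to $\mathbb{P}TM$, defines a set $E(x)$ of \emph{geometric times} (times at which the reparametrization tree has a red node over $x$, so that $f^nD$ near $f^nx$ is genuinely a bounded curve with controlled distortion), and proves via the Reparametrization Lemma that $E(x)$ has positive upper density on a positive-$\Leb_\sigma$ set (Proposition~\ref{lebgeo}). It then averages not over $\llbracket 0,n_j\rrbracket$ but over a F\o lner sequence $(F_n)_n$ that is \emph{filled with} $E(x)$ in the sense of Section~\ref{drei}; this is what makes the limit measure sit on the unstable bundle and what makes the entropy estimate go through. The entropy lower bound $h(\mu)\ge\chi_1(\nu)$ comes from a \emph{F\o lner Gibbs property} (Proposition~\ref{coeur}) proved by an inductive covering argument over the connected components of $F_n$ --- not from Katok's formula, and not from volume growth (your invocation of $\mathrm{vol}(f^nB)$ is a red herring: for the hyperbolic components of $\nu$ one has $\chi_2<0$, so $\lambda_2^+=0$ and volume does not grow). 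Finally, to pass from $h\ge\psi^q(\mu)$ to $h\ge\chi_1(\nu)$ one lets $q\to\infty$ so that $\frac{1}{q}\int\log\|d_xf^q\|\,d\nu\to\chi_1(\nu)$; this is where the $R(f)/r$ defect in finite smoothness appears, not merely as a degradation of a reparametrization count.

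Two smaller points on the exhaustion. First, ``by Pesin theory $\Leb$-a.e.\ point of $\mathcal B(\mu_i)$ is Lyapunov regular with $\chi=\chi(\mu_i)$'' is not free: belonging to a basin says nothing about Lyapunov regularity of the point itself. The paper instead shows (Lemma~\ref{fini}, via Pesin charts) that $\chi(y)=\chi(x)$ for every $y\in W^s(x)$ with $x\in\mathcal R^{+*}$, and combines this with the covering $\{\chi>0\}\stackrel{o}{\subset}W^s(\Gamma)$ obtained in Proposition~\ref{bass} from absolute continuity of the stable holonomy. Second, in the contradiction step of Proposition~\ref{bass} the paper does not simply re-run the production lemma on $A=\{\chi>\lambda\}\setminus U$; it needs that the newly produced SRB mass actually charges $A$, and for this it uses Lebesgue density along the curve $\sigma$ together with bounded distortion at geometric times to transport density from $H_n(x)$ to $D_n(x)$ before applying the stable holonomy --- a step your sketch (``stretching makes a positive-measure family of arcs admissible $u$-curves'') glosses over.
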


\begin{coro}
Let $f:M\circlearrowleft$ be a $C^\infty$ surface diffeomorphism. Then 
$$\{\chi>0\}\stackrel{o}{\subset}\bigcup_{\mu \text{ SRB ergodic}}\mathcal {B}(\mu).$$
\end{coro}

\begin{coro}[Buzzi-Crovisier-Sarig \ \cite{BCS3}]\label{coco}
Let $f:M\circlearrowleft$ be a $C^\infty$ surface diffeomorphism. 

If  $\Leb(\chi>0)>0$, then there exists a SRB measure.

\end{coro}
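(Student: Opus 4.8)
The plan is to obtain this as an immediate consequence of the main theorem stated above; the corollary has essentially no independent content once that theorem is in hand. Concretely, suppose $\Leb(\chi>0)>0$. The theorem provides a countable (a priori possibly empty) family of ergodic SRB measures $(\mu_i)_{i\in I}$ together with the set $\Lambda=\{\chi(\mu_i):i\in I\}\subset\mathbb R_{>0}$, and in particular the inclusion $\{\chi>0\}\stackrel{o}{\subset}\{\chi\in\Lambda\}$.

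The key --- and only --- step is a contrapositive bookkeeping argument. If there were no SRB measure at all, then $I=\emptyset$, hence $\Lambda=\emptyset$, hence $\{\chi\in\Lambda\}=\emptyset$; combined with the above inclusion this forces $\Leb(\chi>0)\le\Leb(\chi\in\Lambda)=0$, contradicting the hypothesis. Therefore $I$ is nonempty and any $\mu_i$ is an SRB measure (in fact $\chi(\mu_i)>0$ since $\Lambda\subset\mathbb R_{>0}$, and its basin has positive Lebesgue measure by the second bullet), which proves the corollary.

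Accordingly, I do not expect any obstacle in this deduction itself: all the difficulty is upstream, in proving the main theorem --- namely constructing the countably many ergodic SRB measures and showing that the set of their exponents exhausts $\{\chi>0\}$ up to a Lebesgue-null set, via the entropic approach to building SRB measures announced in the introduction. If instead one wanted a self-contained proof of the corollary not relying on that theorem, the route would be either to reproduce the Buzzi--Crovisier--Sarig construction \cite{BCS3}, or to run an entropic/variational argument: use positivity of $\chi$ on a positive-measure set to produce an $f$-invariant measure whose entropy matches its positive Lyapunov exponent (the equality case of Ruelle's inequality), and then invoke the Ledrappier--Young characterization of SRB measures \cite{young} to upgrade this to absolute continuity of the conditional measures along unstable manifolds. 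In either of those alternatives the genuine obstacle would be precisely this last absolute-continuity step, which is exactly what the entropic machinery of the present paper is designed to handle.
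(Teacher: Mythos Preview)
Your proposal is correct and matches the paper's approach: the corollary is stated without a separate proof, as it is an immediate consequence of Theorem~1 (or equivalently of Corollary~1) via exactly the contrapositive bookkeeping you describe. Your additional remarks about alternative self-contained routes are accurate but go beyond what the paper does here.
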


 In fact we  establish a $C^r$, $1<r< +\infty$, stronger version, which implies straightforwardly   Theorem 1:
 
 \begin{theorem*} \label{Cr}
Let $f:M\circlearrowleft$ be a $C^r$, $r>1$,  surface diffeomorphism. Let $R(f):=\lim_n\frac{1}{n}\log^+ \sup_{x\in M}\|d_xf^n\|$.     There are countably many ergodic SRB measures $(\mu_i)_{i\in I}$ with $\Lambda:=\{\chi(\mu_i), \ i\in I\}\subset ]\frac{R(f)}{r},+\infty[$, such that we have :
\begin{itemize}
\item   $\left\{\chi>\frac{R(f)}{r}\right\}\stackrel{o}{=} \{\chi\in \Lambda\}$, 
\item  $\{\chi=\lambda\}\stackrel{o}{\subset}\bigcup_{i, \chi(\mu_i)=\lambda}\mathcal {B}(\mu_i) $ for all $\lambda\in \Lambda$.
\end{itemize} 

\end{theorem*}
 
              When $f$ is a $C^{1+}$ topologically transitive surface diffeomorphism,                there is at most one SRB measure, i.e. $\sharp I\leq 1$ \cite{RRHH}.  If moreover  the system is topologically mixing, then the SRB measure when it exists is Bernoulli \cite{BCS}. By  the spectral decomposition of  $C^r$ surface diffeomorphisms for $1<r\leq +\infty$ \cite{BCS} there are at most finitely many ergodic SRB measures with entropy and thus maximal  exponent larger than  a given  constant $b>\frac{R(f)}{r}$. Therefore, in the Main Theorem,  the set  $\Lambda=\{\chi(\mu_i), \ i\in I\}$ is either finite or a sequence decreasing to $\frac{R(f)}{r}$.  When $r$ is finite, there may also exist ergodic SRB measures $\mu$ with $\chi(\mu)\leq \frac{R(f)}{r}$. \\

            We prove in a forthcoming paper \cite{burex} that  the above statement is sharp by  building for any finite $r>1$ a $C^r$ surface diffeomorphism $(f,M)$ with a periodic saddle hyperbolic point  $p$ such that $\chi(x)=\frac{R(f)}{r}>0$ for all $x\in U$ for some set $U\subset \mathcal B(\mu_p)$ with $\Leb(U)>0$, where $\mu_p$ denotes the periodic measure associated to $p$  (see \cite{bur} for such an example of interval maps). \\

In higher dimensions we let $\Sigma^k\chi(x):=\limsup_n\frac{1}{n}\|\Lambda^k d_xf^n\|$ where $\Lambda^k df$ denotes the action induced by $f$ on the $k^{th}$ exterior power of $TM$ for $k=1,\cdots, d$ with $d$ being the dimension of $M$.   By convention we also let $\Sigma^0\chi=0$. For any $C^1$ diffeomorphism $(M,f)$ we have $\Leb(\Sigma^d\chi>0)=0$ (see \cite{ara}). 
The product of a figure-eight attractor with a surface Anosov diffeomorphism does not admit any SRB measure whereas $\chi$ is positive on a set of positive Lebesgue measure. However we conjecture :

\begin{conj}
Let $f:M\circlearrowleft$ be a $C^\infty$ diffeomorphism on a compact manifold (of any dimension). 

If $\Leb\left(\Sigma^k\chi>\Sigma^{k-1}\chi\geq 0\right)>0$, then there exists an ergodic  measure with at least $k$ positive Lyapunov exponents, such that its entropy is larger than or equal to the sum of its $k$ smallest positive Lyapunov exponents. 
\end{conj}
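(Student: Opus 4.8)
The plan is to run the entropic scheme of this paper one exterior power higher, reducing the conjecture to a $k$-dimensional volume-growth/entropy estimate. I would first isolate the elementary convex-optimization fact that converts a ``partial SRB'' property into the stated entropy bound: if $\mu$ is ergodic with positive Lyapunov exponents $0<\lambda_1\le\cdots\le\lambda_p$ (so $p\ge k$) and Ledrappier--Young partial dimensions $\gamma_1,\dots,\gamma_p\in[0,1]$ with $\sum_{i=1}^p\gamma_i\ge k$, then by the Ledrappier--Young entropy formula
$$h(\mu)=\sum_{i=1}^p\lambda_i\gamma_i\ \ge\ \min\Big\{\textstyle\sum_{i=1}^p\lambda_i c_i:\ 0\le c_i\le1,\ \sum_i c_i=k\Big\}=\lambda_1+\cdots+\lambda_k,$$
the minimum being attained by loading the $k$ \emph{smallest} exponents. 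Hence it suffices to build an ergodic measure with at least $k$ positive exponents whose total unstable partial dimension is at least $k$, i.e. whose conditional measures along some $k$-dimensional expanding subbundle are absolutely continuous.

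For the construction I would proceed as in the surface case, but pushing forward $k$-dimensional pieces instead of curves. At Lebesgue-a.e.\ point $x$ of the hypothesis set $\{\Sigma^k\chi>\Sigma^{k-1}\chi\ge 0\}$, Oseledets applied to $\Lambda^k df$ — or, more robustly, subexponential control of the singular values of $d_xf^n$ — yields a measurable $k$-plane field $E^k_x\subset T_xM$ along which the induced Riemannian $k$-volume grows at exponential rate $\Sigma^k\chi(x)>0$, while the condition $\Sigma^{k-1}\chi\ge 0$ prevents the $(k-1)$st exponent from destroying genuine $k$-dimensional expansion. Fixing a positive-Lebesgue-measure subset, I would take small embedded $C^\infty$ $k$-disks $\Delta$ through its points with tangent spaces close to $E^k$, push forward the normalized induced volume $\nu$, and form $\mu_n:=\frac1n\sum_{j=0}^{n-1}f^j_*\nu$; any weak-$*$ limit $\mu_\infty$ is $f$-invariant.

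The entropy bound should then come from Yomdin--Gromov reparametrization theory exactly as for $k=1$, $d=2$: the iterates $f^n(\Delta)$ are covered by $e^{o(n)}$ (resp.\ $e^{nR(f)/r+o(n)}$ in the $C^r$ case) reparametrized pieces of uniformly bounded $C^r$ geometry, and comparing this count with the exponential growth of $k$-volume gives a lower bound on $h(\mu_\infty)$ by the $k$-volume exponent. Passing to an ergodic component $\mu$ with positive entropy, and invoking the absolute-continuity mechanism that is the technical heart of the present paper — pushed-forward Lebesgue measure stays absolutely continuous along an unstable-type $k$-lamination — one obtains that $\mu$ has at least $k$ positive exponents with $\sum_i\gamma_i\ge k$, and the optimization fact concludes.

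The genuine obstacles are all higher-dimensional. (i) \emph{Robust choice of disks.} For $k=1$ one pushes curves, whose geometry (length, curvature, reparametrization) is elementary; for $k>1$ the iterates $f^n(\Delta)$ may in the limit align with directions mixing positive and negative exponents, so that $\mu$ need not ``see'' $k$ positive exponents — one must exploit the gap $\Sigma^k\chi>\Sigma^{k-1}\chi$ to set up a $k$-dimensional cone field and a partial domination in which the disks persist, and the figure-eight $\times$ surface Anosov example shows this gap cannot be dropped. (ii) \emph{Volume growth $\Rightarrow$ entropy $\Rightarrow$ absolute continuity in dimension $k$.} Both the reparametrization bookkeeping and, above all, the absolute continuity of the conditional measures (which occupies the bulk of the surface argument) must be redone with higher-dimensional Pesin blocks, $k$-dimensional ``fake'' unstable laminations and the attendant distortion estimates. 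This last point is where I expect the real difficulty to lie, and is presumably why the statement is still only a conjecture.
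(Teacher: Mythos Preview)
The statement you are attempting is labelled \emph{Conjecture} in the paper and is not proved there. Immediately after stating it the author writes: ``In the present two-dimensional case the semi-algebraic tools used to bound the distorsion and the local volume growth of $C^\infty$ curves are elementary. This is a challenging problem to adapt this technology in higher dimensions.'' So there is no proof in the paper to compare your proposal against.

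Your text is not a proof but a research outline, and you yourself say so in the last line. As a strategy it is broadly aligned with what the paper does for $k=1$, $d=2$: pushing forward $k$-disks, F\"olner/Gibbs entropy lower bounds via reparametrization, and Ledrappier--Young to conclude. The convex-optimization reduction at the start is correct and useful. But the two obstacles you flag at the end --- a robust $k$-dimensional cone field under the hypothesis $\Sigma^k\chi>\Sigma^{k-1}\chi\ge 0$, and a $k$-dimensional analogue of the Reparametrization Lemma with the attendant distortion and absolute-continuity control --- are precisely the missing ingredients, and nothing in your write-up supplies them. In particular, the paper's Reparametrization Lemma is genuinely one-dimensional (it relies on B\'ezout for real polynomials in one variable and on the Landau--Kolmogorov inequality on an interval), and the ``geometric times'' machinery has no established higher-dimensional counterpart. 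Until those pieces exist, the argument remains a plan rather than a proof, which is exactly why the paper records the statement as a conjecture.
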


In the present two-dimensional case the semi-algebraic tools used to bound the distorsion and the local volume growth of $C^\infty$ curves are elementary. This is  a challenging problem to adapt this   technology  in higher dimensions. \\

When the empirical measures from   $x\in M$ are not converging,  the point  $x$ is said to have  historic behaviour \cite{ruee}.   A set $U$ is    contracting when  the diameter of $f^nU$ goes to zero when $n\in \N$ goes to infinity.  In a contracting set the empirical measures of all points have the same limit set,  however  they may not converge. P. Berger and S. Biebler have  shown  that   $C^\infty$ densely inside the  Newhouse domains \cite{BB} there are contracting  domains with historic behaviour. In intermediate smoothness, such domains have been previously built in \cite{KS}.  As a consequence of the Main Theorem, Lebesgue almost every point $x$ with historic behaviour  satisfies $\chi(x)\leq 0$ for $C^\infty$ surface diffeomorphisms. We also show the following statement.

\begin{theorem}\label{yoyo}
Let $f$ be a $C^\infty$ diffeomorphism on a compact manifold (of any dimension).  Then 
 Lebesgue a.e. point $x$ in a contracting set satisfies $\chi(x)\leq 0$. 
\end{theorem}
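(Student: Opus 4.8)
\section*{Proof proposal for Theorem \ref{yoyo}}

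The plan is to argue by contradiction: suppose $U$ is a contracting set with $\Leb(U\cap\{\chi>0\})>0$, and derive a contradiction with the defining property $\diam f^nU\to 0$. By inner regularity we may assume $U$ is compact, and by intersecting with a level set $\{\chi>\lambda\}$ for a suitable rational $\lambda>0$ we may assume every $x\in U$ satisfies $\chi(x)>\lambda>0$; we then want to show this forces some iterate $f^nU$ to have diameter bounded below, or else to produce an invariant measure with a positive exponent supported on $\bigcap_n\overline{f^nU}$, which is a single point by the contracting hypothesis — impossible. The key point is that a contracting set is, up to finitely many iterates, contained in an arbitrarily small ball, hence $df^n$ restricted to $U$ must eventually be dominated by the derivative cocycle along the (single) limiting orbit; but along a single orbit one cannot have a set of positive Lebesgue measure of points with exponential growth of $\|d_xf^n\|$ unless that orbit itself carries the growth, and then the diameter of $f^nU$ would expand in the corresponding direction.

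More concretely, here are the steps I would carry out. First, fix $\lambda>0$ and a compact $K\subset U$ with $\Leb(K)>0$ and $\chi(x)\ge\lambda$ for all $x\in K$; replace $K$ by a density point neighbourhood so that $K$ contains a small round ball intersected with a positive-measure set, or better, use the fact that $\Leb(K)>0$ to find a ball $B$ with $\Leb(K\cap B)>\tfrac12\Leb(B)$. Second, since $\diam f^nU\to 0$, for every $\varepsilon>0$ there is $N$ with $\diam f^nU<\varepsilon$ for all $n\ge N$; in particular $f^nK$ sits in a ball of radius $\varepsilon$ for $n\ge N$. Third, estimate the Lebesgue measure of $f^nK$ from below using the change of variables formula: $\Leb(f^nK)=\int_K|\Jac d_xf^n|\,d\Leb(x)$. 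On a surface, $|\Jac d_xf^n|=\|\Lambda^2 d_xf^n\|$, and I would combine the lower bound on $\|d_xf^n\|$ coming from $\chi(x)\ge\lambda$ with a lower bound on the "other" singular value to control the Jacobian from below on a large-measure subset of $K$ — this is where the surface (or more generally the structure of exterior powers) enters, together with the uniform bound $R(f)$ controlling the top singular value and hence forcing the second singular value not to be too small whenever the Jacobian is not too small. Fourth, conclude that $\Leb(f^nK)$ does not go to zero along a subsequence, contradicting $\Leb(f^nK)\le\Leb(B(\cdot,\varepsilon))\to 0$ as $\varepsilon\to0$.

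The cleaner route, which I would actually prefer, avoids Jacobian gymnastics: extract from the orbit of $U$ a limit. Pick $x_0\in U$; the sets $\overline{f^nU}$ are nested (after discarding transients) into balls shrinking to a point, so $\mathrm{dist}(f^nx,f^ny)\to 0$ uniformly for $x,y\in U$. Now take $x\in K$ with $\chi(x)\ge\lambda$ and realize $\chi(x)$ along a subsequence $n_j$; consider the empirical measures $\mu_{n_j}=\frac1{n_j}\sum_{k<n_j}\delta_{f^kx}$ and a weak-$*$ limit $\mu$. Because $\|d_xf^{n_j}\|$ grows at rate $\ge\lambda$ and $\log\|df\|$ is continuous, by the subadditive/Kingman argument (or simply $\frac1{n}\log\|d_xf^n\|\le\frac1n\sum_{k<n}\log\|d_{f^kx}f\|$ is false in general, so one must instead use that $\limsup$ of the cocycle norm controls an upper exponent of $\mu$ via the standard semicontinuity of top Lyapunov exponent / subadditive ergodic considerations) one gets $\chi(\mu)>0$, hence $\mu$ has a positive Lyapunov exponent, hence $\mathrm{supp}\,\mu$ is not reduced to a single periodic orbit with only nonpositive exponents. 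But $\mathrm{supp}\,\mu\subset\bigcap_{m}\overline{\bigcup_{n\ge m}f^nU}=\bigcap_m\overline{f^mU}$, which by the contracting property is a single point $p$; then $\mu=\delta_p$ with $p$ fixed, forcing $\chi(\mu)=\log\|d_pf\|$, and one checks this cannot be positive while $\diam f^nU\to0$ — indeed a positive exponent at the fixed point $p$ would expand distances near $p$, contradicting the contraction of $f^nU$ for $U$ a neighbourhood portion around points limiting to $p$.

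The main obstacle is the passage from "$\chi(x)>0$ on a positive-measure set" to "the limiting dynamics on $\bigcap_n\overline{f^nU}$ carries a positive exponent": the exponent $\chi$ is defined via $\limsup_n\frac1n\log\|d_xf^n\|$, which is only an upper bound on Lyapunov exponents of empirical limits and is not itself upper semicontinuous in a way that transfers cleanly to the weak-$*$ limit measure. I expect one must either (i) use a pointwise-to-measure argument exploiting that $\Leb(K)>0$ — e.g. a version of the Ruelle inequality or the fact, used elsewhere in this paper, that positive-measure sets of points with exponential derivative growth force entropy/exponents on accumulating measures — or (ii) run the Jacobian lower-bound argument of the previous paragraph carefully, using $R(f)$ and the two-dimensionality (or $\Lambda^d$-triviality in general: $\Leb(\Sigma^d\chi>0)=0$) to show the volume of $f^nK$ stays bounded away from $0$. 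Route (ii) seems most robust and most in the spirit of the paper's semi-algebraic volume-growth estimates; the delicate point there is handling the set where the second singular value is tiny, which I would control by noting that on such a set the Jacobian is tiny so it contributes negligibly to $\Leb(f^nK)$, while on the complement the lower bound $\|d_xf^n\|\ge e^{n\lambda/2}$ (along the relevant subsequence, on a positive-measure piece, via a pigeonhole/Egorov argument) together with a not-too-small second singular value gives $|\Jac d_xf^n|$ large, so $\Leb(f^nK)\not\to0$.
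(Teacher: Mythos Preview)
Your proposal contains genuine gaps in both routes, and neither is close to the paper's argument.

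\textbf{Route 2 (empirical measures).} The claimed equality
\[
\bigcap_{m}\overline{\bigcup_{n\ge m}f^nU}=\bigcap_m\overline{f^mU}
\]
is false: the left-hand side is the $\omega$-limit set of $U$, which can be a nontrivial compact invariant set, while the right-hand side is at most a point (and may be empty, since the $\overline{f^mU}$ are not nested). So $\mathrm{supp}\,\mu$ need not be a single fixed point. All you actually know from the contracting hypothesis is that any two orbits in $U$ are forward asymptotic, hence share the same $\omega$-limit set and the same empirical limits; but that common $\omega$-limit set can be complicated, and $\mu$ is merely some invariant measure supported on it with $\chi_1(\mu)>0$. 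This is not a contradiction. Your fallback (``positive exponent at the fixed point $p$ would expand distances near $p$'') only applies once you have reduced to a fixed point, which you have not.

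\textbf{Route 1 (Jacobian/volume).} The hypothesis $\chi(x)\geq\lambda$ controls only the \emph{largest} singular value of $d_xf^n$; it says nothing about $|\Jac d_xf^n|$. In fact the paper itself recalls that $\Leb(\Sigma^d\chi>0)=0$, so for Lebesgue-a.e.\ $x$ the Jacobian exponent is $\leq 0$; there is no obstruction to $\Leb(f^nK)\to 0$ coming from $\chi>0$ alone. Your splitting into ``second singular value tiny / not tiny'' is circular: you would need to know the ``not tiny'' set has positive measure, but nothing you have assumed forces this. Also note the theorem is stated in any dimension, and your route 1 is explicitly two-dimensional.

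\textbf{What the paper does.} The proof is one-dimensional and hinges on the $C^\infty$ hypothesis via Yomdin's theorem, which you never invoke. One first chooses $\epsilon>0$ so small that the one-dimensional local volume growth satisfies
\[
v^*(f,\epsilon):=\sup_\sigma\limsup_n\frac1n\sup_{x}\log\big|f^n(B(x,\epsilon,n)\cap\sigma_*)\big|<\tfrac{a}{2},
\]
the supremum over $C^\infty$ embedded curves $\sigma$. Because $U$ is (almost) contracting, it is covered by subexponentially many $(n,\epsilon)$-dynamical balls: $U\subset\bigcup_{x\in C_n}B(x,\epsilon,n)$ with $\frac1n\log\sharp C_n\to 0$. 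On the other hand, Lemma~\ref{dav} (from \cite{bure}) produces a $C^\infty$ curve $\sigma_*$ and infinitely many $n$ with
\[
\big|\{x\in U\cap\sigma_*:\ \|d_xf^n(v_x)\|>e^{na}\}\big|>e^{-n\gamma}.
\]
Hence $|f^n(U\cap\sigma_*)|\geq e^{n(a-\gamma)}$, while the covering and Yomdin's bound force $|f^n(U\cap\sigma_*)|\leq \sharp C_n\cdot e^{n(v^*(f,\epsilon)+o(1))}$. Taking $\gamma<a/2$ gives the contradiction $a-\gamma\leq v^*(f,\epsilon)<a/2$.

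The essential idea you are missing is to measure growth along a \emph{curve} (where one positive exponent genuinely forces length to grow) rather than full-dimensional volume, and then to cap that growth using Yomdin's $C^\infty$ local volume estimate together with the subexponential dynamical covering that the contracting property provides.
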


\begin{ques}
Let $f$ be a $C^\infty$ surface diffeomorphism. Assume the set $H$ of points with historic behaviour has positive  Lebesgue measure. Does every Lebesgue density point of $H$ belongs  to a almost contracting\footnote{See Section \ref{gui} for the definition of almost contracting set.} set with positive Lebesgue measure? \\
\end{ques}

%\begin{ques}Let $f$ be a $C^\infty$ surface diffeomorphism. Assume the set $H$ of points with historic behaviour has positive  Lebesgue measure. Does every Lebesgue density point of $H$ belongs  to a contracting set with positive Lebesgue measure? \\\end{ques}

We explain now  in few lines the main ideas to build a SRB measure under the assumptions of the Main Theorem. The geometric approach for uniformly hyperbolic systems consists in considering a weak limit of $\left(\frac{1}{n}\sum_{k=0}^{n-1}f_*^k\Leb_{D_u}\right)_n$, where $D_u$ is a local unstable disc and  $\Leb_{D_u}$ denotes the normalized Lebesgue measure on $D_u$ induced by its inherited Riemannian structure as a submanifold of $M$.  Here we take a smooth $C^r$ embedded curve $D$ such that 
$$\chi(x,v_x):=\limsup_n\frac{1}{n}\log \|d_xf^n(v_x)\|>b>\frac{R(f)}{r}$$ for  $(x,v_x)$ in the unit tangent space $T^1D$ of $D$ with $x$ in a subset $B$ of $D$ with positive $\Leb_D$-measure.  For $x$ in $B$ we define  a subset $E(x)$ of  positive integers, called the \textit{ geometric set},  such that  the following properties hold for any $n\in  E(x)$  :
\begin{itemize}
\item the geometry of $f^nD$ around $f^nx$ is \textit{bounded} meaning that for some uniform  $\epsilon>0$, the connected component $D_n^\epsilon(x)$ of $f^nD$ with the ball at $f^nx$ of radius $\epsilon>0$ is a curve with bounded $s$-derivative for $s\leq r$,
\item the distorsion of $df^{-n}$ on the tangent space of $ D_n^\epsilon(x)$ is controlled, 
\item for some $\tau>0$ we have $\frac{\|d_xf^{l}(v_x)\|}{\|d_xf^{k}(v_x)\|}\geq  e^{(l-k)\tau}$ for any $l>k\in E(x)$.
\end{itemize}
We show that $E(x)$ has positive upper asymptotic density for $x$ in a subset $A$ of $B$ with positive $\Leb_D$-measure.  Let $F:\mathbb PTM\circlearrowleft$ be the map induced by $f$ on the projective tangent bundle $\mathbb PTM$.
We build a SRB measure by considering a weak limit $\mu$ of a sequence of the form  $\left(\frac{1}{\sharp F_n}\sum_{k\in F_n}F_*^k\mu_n\right)_n$ such that :
\begin{itemize}
\item $(F_n)_n$ is a F\"olner sequence,  so that the weak limit $\mu$ will be invariant by $F$,
\item for all $n$, the measure $\mu_n$ is the probability measure induced by $\Leb_D$ on $A_n\subset A$,  the $\Leb_D$-measure of $A_n$ being  not exponentially small,
\item the sets $(F_n)_n$ are in some sense  \textit{filled with } the geometric set  $E(x)$  for $x\in A_n$. Then the measure $\mu$ on $\mathbb PTM$ will be supported on the unstable Oseledec's bundle.
\end{itemize} 
Finally we check with some F\"olner Gibbs property that the limit empirical measure $\mu$ projects to  a SRB measure on $M$ by using the Ledrappier-Young entropic characterization.\\

The paper is organized as follows. In Section 2 we recall for general sequences of integers  the notion of asymptotical density and we build for any  sequence $E$ with positive upper density a F\"olner  set $F$ filled with  $E$. Then we use a Borel-Cantelli argument  to define our  sets $(A_n)_n$ and the F\"olner sequence $(F_n)_n$. In Section 3, we study the maximal  Lyapunov exponent and the entropy of  the generalized empirical measure $\mu$ assuming some Gibbs property. We introduce the geometric set  in Section 4 by using the Reparametrization Lemma of \cite{bure}. We build then SRB measures  in Section 5 by using the abstract formalism of Section 2 and 3.  Then we prove the covering property of the basins in Section 6 by  the standard argument of absolute continuity of Pesin stable foliation.   The last section is devoted to the proof of  Theorem \ref{yoyo}. \\

\textit{Comment :} In a first version of this work, by following \cite {bure}  (incorrectly) the author claimed that, at $b$-hyperbolic  times $n$ of the sequence $\left(\|d_xf^k(v_x)\|\right)_k$ for some $b>0$, the geometry of $f^nD$ at $f^nx$ was bounded.  J. Buzzi, S. Crovisier and O. Sarig gave then in  \cite{BCS3} another proof of Corollary \ref{coco} by using their analysis of the entropic continuity of Lyapunov exponents from \cite{BCS2}.
But as realized recently, our claim on the geometry at hyperbolic times is wrong in general and  we manage to show it  only when $\chi(x)>\frac{R(f)}{2}$.  In this last version, we correct our proof by showing directly that the set of times with bounded geometry has positive upper asymptotic density on a set of positive $\Leb_D$-measure. Our proof  is still based on the Reparametrization Lemma proved in \cite{bure}.

\section{Some asymptotic properties of integers}\label{drei}

\subsection{Asymptotic density}
We first introduce some notations.
In the following we let $\mathcal P_\N$ and $\mathcal P_n$ be respectively the   power sets of $\mathbb N$ and $\{1,2,\cdots,n\}$,  $n\in \N$.  
The\textbf{ boundary} $\partial E$ of $E\in \mathcal P_\N$ is the subset of $\N$ consisting in the integers $n\in E$ with $n-1\notin E$ or  $n+1\notin E$. We also let $E^{-}:=\{n\in E, \  n+1\in E\}$. For $a,b\in \N$ we write $\G a,b \R$  (resp. $\G a,b \G$, $\R a,b\R$) the interval of 
integers $k$ with $a\leq k\leq b$ (resp $a\leq k< b$, $a<k\leq b$). The \textbf{connected components} of $E$ are the maximal  intervals of integers contained in $E$. An interval of integers $\G a, b\G$ is said $E$-\textbf{irreducible} when we have $a, b\in E $ and $\G a,b \G\cap E=\{a\}$.
 For $E\in \mathcal P_\N$ we let $E_{(n)}:=E\cap \G 1,n \R\in \mathcal P_n$ for all $n\in \N$.  For  $M\in \N$,  we   denote by $E_M$     the union of the  intervals $\G a,b \R$    with $a,b\in E$ and $|a-b|\leq M$.

We let $\mathfrak N$  be the set of increasing sequences of natural  integers,  which may be identified with the subset of $\mathcal P(\N)$ given by infinite subsets of $\N$.  For $\mathfrak n\in \mathfrak N$ we define the \textbf{generalized power set of $\mathfrak n$} as 
$\mathcal Q_\mathfrak n:=\prod_{n\in \mathfrak n} \mathcal P_n.$\\

We recall now the classical notion of upper and lower asymptotic densities. 
For $n\in \N^*$ and $F_n\in \mathcal P_n$ we let  $d_n(F_n)$ be the frequency of $F_n$ in $\G 1,n \R$: $$d_n(F_n)=\frac{\sharp F_n}{n}.$$
The \textbf{upper and lower asymptotic densities} $\overline{d}(E)$  and $\underline{d}(E)$ of  $E\in \mathcal P_\N$ are respectively defined by
$$\overline{d}(E):=\limsup_{n\in \mathbb N}d_n(E_{(n)} ) \text{ and} $$
$$\underline{d}(E):=\liminf_{n\in \mathbb N}d_n(E_{(n)} ). $$
We just write $d(E)$ for the limit,  when the frequencies $d_n(E_{(n)})$ are converging.
For any $\mathfrak n\in \mathfrak N$ we let similarly   $\overline{d}^\mathfrak n(E):=\limsup_{n\in \mathfrak n}d_n(E_{(n)} )$ and $\underline{d}^\mathfrak n(E):=\liminf_{n\in \mathfrak n}d_n(E_{(n)} )$.  The concept of  upper and lower  asymptotic densities of $E\in \mathcal P_\N$ may be extended to generalized power sets  as follows.  For $\mathfrak n\in \mathfrak N$ and   $\mathcal F=(F_n)_{n\in \mathfrak n}\in\mathcal Q_\mathfrak n$ we let 
$$\overline{d}^{\mathfrak n}(\mathcal F):=\limsup_{n\in \mathfrak n}d_n(F_n) \text{ and }$$
$$\underline{d}^{\mathfrak n}(\mathcal F):=\liminf_{n\in \mathfrak n}d_n(F_n). $$
Again we just write $d^{\mathfrak n}(E) $ and $d^{\mathfrak n}(\mathcal F)$ when the corresponding frequencies are converging.

\subsection{F\"olner sequence and density along subsequences}
We say that $E\in \mathcal P_\N$ is \textbf{F\"olner  along} $\mathfrak n\in \mathfrak N$ when its boundary $\partial E$ has zero upper asymptotic density with respect to   $\mathfrak n$, i.e. $\overline{d}^\mathfrak n(\partial E)=0$.  More generally $\mathcal F=(F_n)_{n\in \mathfrak n}\in \mathcal Q_\mathfrak n $ with $\mathfrak n\in \mathfrak N$ is F\"olner when  we have $\overline{d}^\mathfrak n(\partial \mathcal F)=0$ with $\partial \mathcal F=(\partial F_n )_{n\in \mathfrak n}$. In general this property seems to be weaker than the usual F\"olner property $\limsup_{n\in \mathfrak n}\frac{\sharp \partial F_n}{\sharp F_n}=0$. But in the following we will work with sequences $\mathcal F$ with $\underline{d}^{\mathfrak n}(\mathcal F)>0$. In this case our definition coincides with the standard one. \\

Let $E,F\in \mathcal P_\N$ and $\mathfrak n\in \mathfrak N$.  We say  that $F$ is \textbf{ $\mathfrak n$-filled with $E$} or $E$ is \textbf{dense in $F$ along $\mathfrak n$}  when we have 
$$\overline{d}^{\mathfrak n}(F\setminus E_M)\xrightarrow{M\rightarrow +\infty}0.$$
Observe that $\left(\overline{d}(E_M)\right)_M$ is converging nondecreasingly to some $a\geq \overline{d}(E)$ when $M$ goes to infinity.  The limit $a$ is in general strictly less than $1$. For example if $E:=\bigcup_n \G 2^{2n},2^{2n+1}\R$ one easily computes $\overline{d}(E_M)=\overline{d}(E)=2/3$ for all $M$. In this case, the set $E$ is moreover a  F\"olner set.\\

Also $\mathcal F=(F_n)_{n\in \mathfrak n}\in \mathcal Q_\mathfrak n
$ is said filled with $E$ when   we have with $\mathcal F\setminus E_M:=(F_n\setminus E_M)_{n\in \mathfrak n}$:
$$\overline{d}^{\mathfrak n}(\mathcal F\setminus E_M)\xrightarrow{M\rightarrow +\infty}0.$$ 

\subsection{F\"olner set $F$ filled with a given $E$ with $\overline{d}(E)>0$}
Given a set $E$ with positive upper asymptotic density we build a F\"olner set $F$ filled with $E$ by using a diagonal argument. More precisely we will build $F$ by filling the holes in $E$ of larger and larger size when going to infinity.

\begin{lem}
For  any $E$ with $\overline{d}(E)>0$ there is a subsequence $\mathfrak n\in \mathfrak N$  and $F\in \mathcal P_\N$ with $\partial F\subset E$ such that 
\begin{itemize}
\item $d^\mathfrak{n}(F)\geq d^{\mathfrak n}(E\cap F )= \overline{d}(E)$;
\item $F $ is F\"olner along $\mathfrak n$;
\item $E$ is dense in $F$ along $\mathfrak n$.  
\end{itemize}
\end{lem}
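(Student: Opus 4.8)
The statement asks, given $E\subset\mathbb N$ with $\overline d(E)>0$, to produce a subsequence $\mathfrak n$ and a set $F$ with $\partial F\subset E$, $d^{\mathfrak n}(F)\ge d^{\mathfrak n}(E\cap F)=\overline d(E)$, $F$ F\"olner along $\mathfrak n$, and $E$ dense in $F$ along $\mathfrak n$.  The idea, as the author indicates, is a diagonal construction in which we fill the holes of $E$ up to a threshold $M_k$ that grows along a suitable sequence of scales $n_k$, while simultaneously arranging that the newly created boundary points land in $E$ and that the density stays close to $\overline d(E)$.

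\textbf{Step 1: choosing the scales.}  Since $\overline d(E)=:\alpha>0$, fix an increasing sequence $(n_k)_k$ along which $d_{n_k}(E_{(n_k)})\to\alpha$; by passing to a further subsequence I may assume $n_{k+1}/n_k\to\infty$ and $|d_{n_k}(E_{(n_k)})-\alpha|<1/k$.  Also choose $M_k\to\infty$ slowly enough that $M_k=o(n_k)$ and, more importantly, such that the interval $\llbracket n_{k-1},n_k\R$ is long compared to $M_k$ (so the ``filling'' done at scale $k$ only modifies $E$ inside blocks of length $\le M_k$ and hence changes the counting function $n\mapsto\sharp(\cdot\cap\G1,n\R)$ by at most an $o(n)$ additive term on $\G1,n_k\R$).

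\textbf{Step 2: the construction of $F$.}  Define $F:=\bigcup_k\big(E_{M_k}\cap\R n_{k-1},n_k\R\big)$, i.e. inside the ``window'' $\R n_{k-1},n_k\R$ we take $E$ together with all its gaps of length at most $M_k$.  First, $\partial F\subset E$: a boundary point $n$ of $F$ is an endpoint of a maximal run of consecutive integers in $F$; in the interior of a window $\R n_{k-1},n_k\R$ such an endpoint is, by construction of $E_{M_k}$, an endpoint of a run in $E_{M_k}$, hence lies in $E$; at the window boundaries one must be slightly careful, but by shifting the cut points $n_k$ to the nearest element of $E$ (possible since $E$ is infinite and $n_k/n_{k-1}\to\infty$, so this moves $n_k$ by $o(n_k)$) one ensures the window endpoints themselves belong to $E$.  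This adjustment is harmless for all the density estimates.

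\textbf{Step 3: density and F\"olner estimates along $\mathfrak n:=(n_k)_k$.}  For the density: $E\cap F\supset E\cap\R n_{k-1},n_k\R$, and since $n_{k-1}=o(n_k)$ we get $d_{n_k}(E\cap F)=d_{n_k}(E_{(n_k)})+o(1)\to\alpha$; since $F\supset E\cap F$ and $F\setminus E$ consists only of short inserted blocks, $d_{n_k}(F)=d_{n_k}(E\cap F)+o(1)$ as well, giving $d^{\mathfrak n}(F)=d^{\mathfrak n}(E\cap F)=\alpha$ (and in particular the inequality, with equality).  For the F\"olner property: $\partial F\cap\G1,n_k\R$ has one boundary point per maximal run of $F$ in windows up to scale $k$; the number of such runs is at most the number of $E_{M_j}$-runs in window $j$, which is bounded by $(\sharp E_{(n_j)})/1$ crudely but, more usefully, each run in window $j$ has length $\ge 1$ and is separated from the next by a gap $>M_j$, so the number of runs of $F$ intersecting $\G1,n_k\R$ is $O(\sum_{j\le k} n_j/M_j)=o(n_k)$ provided $M_j\to\infty$ — hence $\overline d^{\mathfrak n}(\partial F)=0$.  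For ``$E$ dense in $F$ along $\mathfrak n$'': fix $M$; for all $k$ with $M_k\ge M$ we have, inside window $\R n_{k-1},n_k\R$, that $F\subset E_{M_k}\subset E_M$ fails only by the gaps of $E$ of length in $(M,M_k]$ — wait, rather $E_{M_k}\supset E_M$, so $F\setminus E_M\subset\bigcup_{j:\,M_j> M}(E_{M_j}\setminus E_M)\cap\R n_{j-1},n_j\R$ plus the finitely many early windows; since the early windows contribute $o(n_k)$ and $F\setminus E_M\subset\bigcup_{M<\ell\le M_k}\{\text{gaps of }E\text{ of length }\ell\}$, a direct count shows $\sharp\big((F\setminus E_M)\cap\G1,n_k\R\big)\le \sum_{\ell>M}\ell\cdot(\text{number of gaps of }E\text{ of exact length }\ell\text{ in }\G1,n_k\R)$, and splitting this sum at any large $L$ and using that the total number of gaps is $o(n_k)$ on the scales where $M_k$ is large, one gets $\overline d^{\mathfrak n}(F\setminus E_M)\to 0$ as $M\to\infty$.

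\textbf{Main obstacle.}  The delicate point is the bookkeeping at the window boundaries $n_k$: one must reconcile ``$\partial F\subset E$'' with a construction that changes the filling threshold from $M_{k-1}$ to $M_k$ across $n_k$, since at the seam one run of $F$ may abut a gap that is short for scale $k$ but long for scale $k-1$ (or vice versa).  Snapping each $n_k$ to a point of $E$ and — if needed — only \emph{increasing} the sequence $(M_k)$ so that the filled set in window $k$ contains the filled set of window $k-1$ restricted near the seam resolves this, but it requires choosing the two sequences $(n_k)$ and $(M_k)$ in the right dependent order (first $n_k$ given $M_{k-1}$, then $M_k$ given $n_k$), and verifying that all the $o(n_k)$ error terms accumulated from these adjustments are genuinely negligible along $\mathfrak n$.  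Everything else is the routine density arithmetic sketched above.
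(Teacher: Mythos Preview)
Your construction has a genuine gap in the proof that $E$ is dense in $F$ along $\mathfrak n$, and in fact the construction as written can fail to have this property.

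First a minor point: your claim that $d_{n_k}(F)=d_{n_k}(E\cap F)+o(1)$ is wrong.  Filling in ``short'' gaps can change the density substantially --- if $E$ is the set of even integers, then $E_2=\mathbb N$, so $d(E)=1/2$ but $d(E_2)=1$.  Fortunately the statement only requires $d^{\mathfrak n}(F)\ge d^{\mathfrak n}(E\cap F)$, so this by itself is not fatal.  But it signals the real problem.

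The real issue is your ``$E$ dense in $F$'' argument.  Your bound $\sharp\bigl((F\setminus E_M)\cap\G1,n_k\R\bigr)\le\sum_{\ell>M}\ell\cdot(\text{number of gaps of length }\ell)$ is correct but useless without control on the gap statistics of $E$ along $\mathfrak n$, and the phrase ``the total number of gaps is $o(n_k)$'' is simply false in general.  Here is a concrete counterexample to your construction.  Choose $n_k$ with $n_{k+1}/n_k\to\infty$, and take $M_k=k$.  Build $E$ so that in each window $\R n_{k-1},n_k\R$ the set $E$ has density $1/2$, with gaps of size $1$ when $k$ is even and gaps of size $k$ when $k$ is odd.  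Then $d_{n_k}(E)\to 1/2=\overline d(E)$, so your Step~1 is satisfied.  In every window, $E_{M_k}=E_k$ fills all gaps, hence $d_{n_k}(F)\to 1$.  But for any fixed $M$ and any odd $k>M$, the window $\R n_{k-1},n_k\R$ has $E_M=E$ (gaps of size $k>M$ are not filled), so $d_{n_k}(F\setminus E_M)\to 1/2$ along odd $k$.  Thus $\overline d^{\mathfrak n}(F\setminus E_M)=1/2$ for every $M$: density of $E$ in $F$ fails.

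What is missing is precisely what the paper's proof supplies: before defining $F$, one must pass to nested subsequences $\mathfrak n^M$ so that $d^{\mathfrak n^M}(E_M)$ converges to some $\Delta_M$ for each $M$, and then take the diagonal $\mathfrak n=(\mathfrak n^k_k)_k$.  With this extraction the limits $\Delta_M$ increase to some $\Delta_\infty=d^{\mathfrak n}(F)$, and one gets the clean estimate $\overline d^{\mathfrak n}(F\setminus E_M)\le\Delta_\infty-\Delta_M\to 0$.  Without arranging convergence of the densities $d_{n_k}(E_M)$ for each fixed $M$, the diagonal-in-$M_k$ construction cannot be made to work.
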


\begin{proof} We first consider a subsequence  $\mathfrak n^0=(\mathfrak n^0_k)_k$ satisfying $d^{\mathfrak n^0}(E)=\overline{d} (E)$.  We can ensure  
that $\mathfrak n^0_k$ belong to $E$ for all $k$. Observe that $\overline{d}(E\setminus E_M)\leq 1/M$ for all $M\in \N^*$. Therefore for $M>2/\overline{d}(E)$,  we have $\overline{d}^{\mathfrak n^0}(E_{M})\geq \overline{d}^{\mathfrak n^0}(E_{M}\cap E)>\overline{d}(E)/2>0$.
 We fix such an integer $M$ and  we extract again a subsequence 
$\mathfrak{n}^{M}=(\mathfrak{n}^{M}_k)_k$ of $\mathfrak n^0$   such that 
$d^{\mathfrak{n}^{M}}(E_{M})$ is a limit equal to $\Delta_{M}:=\overline{d}^{\mathfrak n^0}(E_{M})$.  Then we put 
$\Delta_{M+1}=\overline{d}^{\mathfrak{n}^{M}}(E_{M+1})$ and we consider a subsequence $
\mathfrak n^{M+1}$ of $\mathfrak n^{M}$ such that $d^{\mathfrak{n}^{M+1}}(E_{M+1})
$ is  a limit equal to $\Delta_{M+1}\geq \Delta_{M}$ and $d_l(E_{M+1})\leq 
\Delta_{M+1}+1/2^{M+1}$ for all $l\in \mathfrak{n}^{M+1}$. We define by induction in this way nested  sequences $
\mathfrak n^k$ for $k> M$ such that $d^{\mathfrak{n}^{k}}(E_{k})$ is a limit equal to 
$\Delta_{k}=\overline{d}^{\mathfrak{n}^{k-1}}(E_{k})$ and $d_l(E_{k})\leq \Delta_{k}+1/2^{k}$ for all $l\in \mathfrak n^{k}$. We let $\Delta_\infty>\overline{d}(E)/2>0$ be the limit of the nondecreasing sequence $(\Delta_k)_k$. We consider  the diagonal sequence $\mathfrak n=(\mathfrak n_k)_{k\geq M}= (\mathfrak n^k_k)_{k\geq M}$ and we let
$$F=\bigcup_{k>M}\G \mathfrak n_{k-1},\mathfrak n_k\R \cap E_k.$$  Clearly we have $\partial F\subset \mathfrak n^0\cup E\subset E$. 

 On the one hand, $F\cap \G 1,\mathfrak n_k\R$ is contained in $E_k\cap \G  1,\mathfrak n_k\R$ so that
  \begin{align*}d_{\mathfrak n_k}(F)&\leq   d_{\mathfrak n^k_k}(E_k),\\& \leq \Delta_k+1/2^k,\\ \overline{d}^{\mathfrak n}(F) & \leq \lim_k\Delta_k=\Delta_{\infty},
  \end{align*}
  On the other hand,  $F\cap \G 1,\mathfrak n_k\R  $ contains $E_l\cap \G \mathfrak n_{l-1},\mathfrak n_k\R$ for all $M<l<k$. Therefore \begin{align*}d_{\mathfrak n^k_k}(E_l)-\frac{\mathfrak n_{l-1}}{\mathfrak n^k_k}&\leq d_{\mathfrak n_k}(F),\\
   \Delta_l&\leq  \underline{d}^{\mathfrak n}(F),\\
   \Delta_{\infty}&\leq \underline{d}^{\mathfrak n}(F).
  \end{align*}
   We conclude  $d^{\mathfrak n}(F)=\Delta_\infty$. 
 
Similarly we have  for all $l>M$ :
 \begin{align*}
 \underline{d}^{\mathfrak n}(E\cap F)&\geq  \underline{d}^{\mathfrak n}(E\cap E_l),\\
 &\geq d^{\mathfrak n}(E)-1/l,\\
   &\geq \overline{d}^{\mathfrak{m}}(E)-1/l,
 \end{align*}
 therefore $ \underline{d}^{\mathfrak n}(E\cap F)\geq \overline{d}(E)$. Also $\overline{d}^{\mathfrak n}(E\cap F)\leq  d^{\mathfrak n}(E)= \overline{d}^\mathfrak{m}(E)$. Consequently we get $d^{\mathfrak n}(E\cap F)=\overline{d}^\mathfrak{m}(E)$.

We check now that $E$ is dense in $F$. For $l$ fixed and for all $k\geq l$ we have
\begin{align*}
d_{\mathfrak n_k}(F\setminus E_l)&\leq d_{\mathfrak n^k_k}(E_k\setminus E_l),\\
& \leq d_{\mathfrak n_k^k}(E_k)-d_{\mathfrak n_k^k}(E_l),\\
&\leq \Delta_k+1/2^k -d_{\mathfrak n_k^k}(E_l).
\end{align*}
By taking the limit in $k$, we get $\overline{d}^{\mathfrak n}(F\setminus E_l)\leq \Delta_\infty-\Delta_l\xrightarrow{l}0$.

Let us prove finally the F\"olner property of the set $F$.  For  $\mathfrak n_k<K\in \partial F$ either $[K-k,K[$ or $]K,K+k]$ lies in the complement of $E$. Therefore $\overline{d}^{\mathfrak n}(\partial F)\leq 2/k$. As it holds  for all $k$, the set  $F$ is F\"olner along $\mathfrak n$. 
\end{proof}

\subsection{Borel-Cantelli argument}
Let $(X, \mathcal A, \lambda)$ be a measure space with $\lambda$ being a finite measure.  A map $E:X\rightarrow \mathcal P_\N$ is said measurable, 
 when for all $n\in \N$ the set  $\{x, \ n\in E(x)\}$ belongs to $\mathcal A$ (equivalently writing $E$ as an increasing sequence $(n_i)_{i\in \N}$ the integers valued functions $n_i$ are measurable).  For such measurable  maps $E$ and $\mathfrak n$,  the upper asymptotic density $\overline{d}^\mathfrak n(E)$ defines a measurable function.

\begin{lem}\label{measurable}Assume $E$ is a  measurable sequence of integers such that $\overline{d}(E(x))>\beta>0$ for $x$ in a measurable set $A$ of a positive $\lambda$-measure. Then there exist  $\mathfrak n\in \mathfrak N$, measurable subsets $(A_n)_{n\in \mathfrak n}$ of $X$ 	and $\mathcal F=(F_n)_{n\in \mathfrak n}\in \mathcal Q_{\mathfrak n}$ with $\partial F_n\subset E(x)$ for all $x\in A_n$, $n\in \mathfrak n$    such that :
\begin{itemize}
\item $\underline{d}^\mathfrak n(\mathcal F)\geq \beta$; 
\item   $\lambda(A_n)\geq \frac{e^{-n\delta_n}}{n^2}$  for all $n\in \mathfrak n$ with $\delta_n\xrightarrow{\mathfrak n \ni n\rightarrow +\infty}0$;
\item $\mathcal F$ is a F\"olner sequence;
\item $E$ is dense in $\mathcal F$ uniformly on $A_n$, i.e.
$$\limsup_{n\in \mathfrak n}\sup_{x\in A_n}d_n\left( F_n\setminus E_M(x)\right) \xrightarrow{M}0.$$
\item \begin{align*}
\liminf_{n\in \mathfrak n} \inf_{x\in A_n}d_n\left(E(x)\cap F_n\right)& \geq \beta.
\end{align*}
\end{itemize}
\end{lem}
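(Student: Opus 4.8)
The plan is to combine the previous lemma (the deterministic construction of a F\"olner set $F$ filled with a given $E$ having $\overline d(E)>0$) with a Borel--Cantelli type argument to make the construction uniform over a positive-measure set of points $x$.

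First I would reduce to a fixed subsequence $\mathfrak n$. For each $x\in A$ we have $\overline d(E(x))>\beta$, so by definition of $\limsup$ the set of ``good'' lengths $G(x):=\{n:\ d_n(E_{(n)}(x))>\beta\}$ is infinite for every $x\in A$, and by measurability of $E$ each set $\{x:\ n\in G(x)\}$ is measurable. Since $A=\bigcup_{N}\{x\in A:\ G(x)\cap [N,\infty)\neq\emptyset \text{ cofinally with lower bound }N\}$... more usefully: for each $n$ let $A^n:=\{x\in A:\ d_n(E_{(n)}(x))>\beta\}$. Then $\limsup_n A^n\supseteq A$ up to the fact that every $x\in A$ lies in infinitely many $A^n$; hence by Fatou/reverse Fatou $\limsup_n \lambda(A^n)\geq \lambda(\limsup_n A^n)\geq \lambda(A)>0$. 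So there is a subsequence $\mathfrak n$ along which $\lambda(A^n)\geq c>0$ for all $n\in\mathfrak n$. This already gives the measure lower bound in a crude form; to get the stated $\lambda(A_n)\geq e^{-n\delta_n}/n^2$ with $\delta_n\to 0$ we only need $\lambda(A_n)$ not to be exponentially small, and a uniform positive lower bound is far stronger, so $\delta_n$ can be taken $\to 0$ trivially (indeed one could take $A_n=A^n$ if only these two bullets were required).

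The real work is to produce, \emph{simultaneously for all $x$ in a single set $A_n$}, the combinatorial set $F_n$ that is $E(x)$-irreducible-boundaried, F\"olner, and filled with $E(x)$ — i.e. to run the diagonal ``hole-filling'' construction of the previous lemma uniformly. Here is how I would do it. Mimic the proof of the previous lemma but track the dependence on $x$. Fix a slowly growing sequence $M_k\to\infty$ (the ``hole size'' allowed at scale $k$). For $x\in A$ and $n$, define $F_n(x):=\bigcup_{k}\G \mathfrak n_{k-1},\mathfrak n_k\R\cap E_{M_k}(x)$ truncated at $n$, exactly as before but now with $M_k$ chosen independently of $x$ and $\mathfrak n$ chosen so that along it the frequencies $d_n(E_{M_k}(x)_{(n)})$ behave well. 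The point is that $F_n(x)$ depends on $x$, which is not allowed — we need a single $F_n$. So instead I would first fix $F_n$ to be the \emph{model} obtained by applying the previous lemma, and then define $A_n$ to be the set of $x\in A^n$ for which $\partial F_n\subset E(x)$ and for which the filling/density estimates of $F_n$ by $E(x)$ hold with the required uniform error; the Borel--Cantelli / pigeonhole step is to show this $A_n$ still has non-exponentially-small measure along a further subsequence. Concretely: by the previous lemma applied to a ``generic'' $E$ we get candidate intervals; refine by a counting argument (at scale $n$ there are at most $2^n$ subsets of $\G 1,n\R$, so some single $F_n\in\mathcal P_n$ is ``witnessed'' by a subset $A_n\subseteq A^n$ with $\lambda(A_n)\geq \lambda(A^n)\cdot 2^{-n}\geq c\,2^{-n}$), which is exactly a lower bound of the form $e^{-n\delta_n}/n^2$ with $\delta_n\to \log 2$; to push $\delta_n\to 0$ one must be more economical — count only subsets $F_n$ of the special form ``unions of $E$-irreducible intervals of bounded length'', whose number is subexponential ($e^{o(n)}$) once the hole sizes $M_k$ grow slowly, so the pigeonhole loss is $e^{o(n)}$ and $\delta_n\to 0$ as required.

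The main obstacle, then, is this last quantitative point: choosing the hole-size schedule $(M_k)$ and the subsequence $\mathfrak n$ so that (i) the family of admissible $F_n$'s has cardinality $e^{o(n)}$, giving $\delta_n\to 0$ after pigeonholing; (ii) for a definite fraction of $x\in A^n$ (fraction $e^{-o(n)}$, hence absorbed into $\delta_n$) the chosen $F_n$ genuinely satisfies $\partial F_n\subset E(x)$, $\underline d^{\mathfrak n}(\mathcal F)\geq\beta$, the uniform F\"olner bound $\overline d^{\mathfrak n}(\partial\mathcal F)=0$, the uniform filling $\limsup_n\sup_{x\in A_n}d_n(F_n\setminus E_{M}(x))\to 0$, and $\liminf_n\inf_{x\in A_n}d_n(E(x)\cap F_n)\geq\beta$. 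One verifies these by re-running the estimates in the proof of the previous lemma, now uniformly: the F\"olner bound $\overline d^{\mathfrak n}(\partial F_n)\leq 2/k$ came purely from ``every boundary point of $F$ beyond scale $\mathfrak n_k$ has a hole of length $\geq k$ on one side'', which is a structural property of $F_n$ alone, so it transfers verbatim; the filling bound $\overline d^{\mathfrak n}(F_n\setminus E_{M_k}(x))\le \Delta_\infty-\Delta_{M_k}$ needs the frequency estimates $d_l(E_{M_k}(x)_{(l)})\le\Delta_{M_k}+2^{-k}$, which along the good subsequence and for $x$ in the pigeonholed set hold by construction; and $\liminf d_n(E(x)\cap F_n)\ge\beta$ follows since on each block $F_n$ contains $E_{M_k}(x)$ which contains $E(x)$, so $d_n(E(x)\cap F_n)\ge d_n(E_{(n)}(x))-o(1)>\beta-o(1)$ on $A^n$. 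Once the scheduling is set up correctly everything else is the bookkeeping already carried out in the previous lemma, so I would present the proof as ``apply the previous lemma pointwise, then pigeonhole on the subexponentially-many possible outputs, then extract a subsequence.''
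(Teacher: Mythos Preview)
Your final summary --- apply the previous lemma pointwise, pigeonhole on the subexponentially many possible outputs, extract a subsequence --- is exactly the paper's approach, and the subexponential count does come from the F\"olner property: a subset of $\G 1,n\R$ is determined by its boundary, and there are at most $2\sum_{k\leq n\alpha_n}{n\choose k}=e^{n\delta_n}$ subsets with boundary of size at most $n\alpha_n$, with $\delta_n\to 0$ when $\alpha_n\to 0$.

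There is one genuine gap, however: you are missing an Egorov-type step before the pigeonhole. The quantities $\Delta_M$, $\Delta_\infty$ and all the convergence rates in the F\"olner and filling estimates from the previous lemma depend on $x$. Pigeonholing only fixes $F_n$; it says nothing about how well $E_M(x)$ fills it for the surviving $x$, so your claim that the filling bound ``holds by construction on the pigeonholed set'' is exactly where the argument breaks. The paper first shrinks $A$ (keeping positive $\lambda$-measure) so that, \emph{uniformly in $x\in A$}: the values $\mathfrak n_k(x)$ are bounded for each $k$; $d_{\mathfrak n_k(x)}(\partial F(x))\to 0$; $\limsup_k\sup_{x} d_{\mathfrak n_k(x)}(F(x)\setminus E_M(x))\to 0$ as $M\to\infty$; and $d_{\mathfrak n_k(x)}(E(x)\cap F(x))\to d^{\mathfrak n(x)}(E(x)\cap F(x))\geq\beta$. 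Only then does Borel--Cantelli run cleanly on $A_n:=\{x\in A:\ n\in\mathfrak n(x)\}$ (your detour through $\{x:\ d_n(E_{(n)}(x))>\beta\}$ is unnecessary), giving $\lambda(A_n)\geq 1/n^2$ infinitely often; the uniform F\"olner bound then supplies $\alpha_n\to 0$ and one pigeonholes. A secondary phrasing issue: your proposed counting class ``unions of $E$-irreducible intervals of bounded length'' is $x$-dependent (since $E=E(x)$) and so cannot be pigeonholed over directly; the correct $x$-free class is ``subsets of $\G 1,n\R$ with at most $n\alpha_n$ boundary points''.
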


\begin{proof}
The sequences  $\mathfrak n$ and $F$  built in the previous lemma define measurable sequences on $A$. By taking a smaller subset $A$ we may assume 
\begin{itemize}
\item $\mathfrak n_k(x)$ is bounded on $A$ for all $k$,
\item  $d_{\mathfrak n_k(x)}(\partial F(x))\xrightarrow{k}0$  uniformly in $x\in A$,
\item $\limsup_k \sup_{x\in A}d_{\mathfrak n_k(x)}(F(x)\setminus E_M(x))\xrightarrow{M}0$,
\item $d_{\mathfrak n_k(x)}(E(x)\cap F(x))\xrightarrow{k}d^{\mathfrak n(x)}(E(x)\cap F(x))\geq \beta$  uniformly in $x\in A$.
\end{itemize}   By Borel-Cantelli Lemma, the subset $A_n:=\{x\in A, \ n\in \mathfrak n(x)\}$ has $\lambda$-measure larger than $1/n^2$ for infinitely many $n\in \mathbb N$. We let $\mathfrak n$ be this infinite subset of integers.  By the (uniform in $x$) F\"olner property of $F(x)$,  the cardinality  of the boundary of  $\left(F(x)\right)_{(n)}=F(x)\cap \G 1, n\R$ for $x\in A_n$ and $n\in \mathfrak n$ is less than $n\alpha_n$ for some sequence  $(\alpha_n)_{n\in \mathfrak n}$  (independent of $x$) going to $0$.
Therefore there are at most $2\sum_{k=1}^{[n\alpha_n]}{n \choose k}$ choices for $(F(x))_{(n)}$ and thus it may be fixed by  dividing the measure of $A_n$ by $2\sum_{k=1}^{[n\alpha_n]}{n \choose k}=e^{n\delta_n}$  for some $\delta_n\xrightarrow{n}0$. 
\end{proof}

%\begin{rem}\label{lasss}We observe that \begin{align*}\forall n\in \mathbb N \ \forall x, \ \ d_n\left(E(x)\cap F_n\right)&\geq d_n(F_n)-d_n\left(F_n\setminus E_M(x)\right)-d_n\left(E(x)\setminus E_M(x)\right), \nonumber\\\forall M, \ \ \liminf_{n\in \mathfrak n}\inf_{x\in A_n} d_n\left(E(x)\cap F_n\right)&\geq \underline{d}^{\mathfrak n}(\mathcal F)-\limsup_{n\in \mathfrak n} \sup_{x\in A_n}d_{\mathfrak n}(F(x)\setminus E_M(x))-1/M,\end{align*}Therefore under the hypothesis of Lemma \ref{measurable}we also have : \begin{align}\label{derder}\liminf_{n\in \mathfrak n} \inf_{x\in A_n}d_n\left(E(x)\cap F_n\right)&\geq \underline{d}^\mathfrak n(\mathcal F)\geq \beta/2.\end{align}\end{rem}

\section{Empirical measures associated to F\"olner sequences}\label{zwei}
Let $(X,T)$ be a topological system, i.e. $X$ is a compact metrizable space and  $T:X\circlearrowleft$ is continuous.  We denote by $\mathcal M(X)$ the set of Borel probability measures on $X$ endowed with the weak-$*$ topology and by $\mathcal M(X,T)$ the compact subset of invariant measures. We will write  $\delta_x$  for the Dirac measure at $x\in X$.   We let $T_*$ be the induced  (continuous) action on $\mathcal M(X)$.   For $\mu\in \mathcal M (X)$ and a finite subset $F$ of $\N$,   we let $\mu^F$ be the empirical measure $\mu^F:=\frac{1}{\sharp F }\sum_{k\in F}T_*^k\mu$. 

\subsection{Invariant measures}
The following lemma is standard, but we give a proof for the sake of completeness. We fix  $\mathfrak n\in \mathfrak N$ and $\mathcal F=(F_n)_{n\in \mathfrak N}\in \mathcal Q_{\mathfrak n}$. 
\begin{lem}\label{fol}
Assume $\mathcal F$ is a  F\"olner sequence   and $\underline{d}^{\mathfrak n}(\mathcal F)>0$.  Let $(\mu_n)_{n\in \mathfrak n}$ be a family in $\mathcal M(X)$ indexed by $\mathfrak n$.  Then any  limit of $\left(\mu_n^{F_n}\right)_{n\in \mathfrak n}$  is a $T$-invariant Borel probability measure. 
\end{lem}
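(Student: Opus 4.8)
The plan is to fix a weak-$*$ limit $\mu=\lim_{j}\mu_{n_j}^{F_{n_j}}$ of some subsequence (such a limit exists since $\mathcal M(X)$ is compact for the weak-$*$ topology) and to prove $T_*\mu=\mu$; since $C(X)$ is dense, this amounts to checking $\int\phi\circ T\,d\mu=\int\phi\,d\mu$ for every continuous $\phi:X\to\mathbb R$. The idea is to estimate the ``invariance defect'' $T_*\mu_n^{F_n}-\mu_n^{F_n}$ and to show it tends to $0$ along $\mathfrak n$. Writing $F_n+1:=\{k+1:\ k\in F_n\}$, one has $T_*\mu_n^{F_n}=\frac{1}{\sharp F_n}\sum_{k\in F_n+1}T_*^k\mu_n$, so that
\[
T_*\mu_n^{F_n}-\mu_n^{F_n}=\frac{1}{\sharp F_n}\Big(\sum_{k\in (F_n+1)\setminus F_n}T_*^k\mu_n-\sum_{k\in F_n\setminus(F_n+1)}T_*^k\mu_n\Big).
\]

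First I would observe that the symmetric difference $(F_n+1)\Delta F_n$ is governed by the boundary $\partial F_n$: if $k\in(F_n+1)\setminus F_n$ then $k-1\in F_n$ and $k\notin F_n$, hence $k-1\in\partial F_n$; and if $k\in F_n\setminus(F_n+1)$ then $k\in F_n$ and $k-1\notin F_n$, hence $k\in\partial F_n$. Consequently $\sharp\big((F_n+1)\Delta F_n\big)\le 2\,\sharp\partial F_n$, and therefore, for any continuous $\phi$ with $\|\phi\|_\infty\le 1$,
\[
\Big|\int\phi\,d(T_*\mu_n^{F_n})-\int\phi\,d\mu_n^{F_n}\Big|\le \frac{2\,\sharp\partial F_n}{\sharp F_n}=\frac{2\,d_n(\partial F_n)}{d_n(F_n)}.
\]

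Next I would use the two hypotheses to see that the right-hand side vanishes along $\mathfrak n$: the F\"olner assumption gives $d_n(\partial F_n)\to 0$ as $\mathfrak n\ni n\to+\infty$, while $\underline d^{\mathfrak n}(\mathcal F)>0$ provides some $\eta>0$ with $d_n(F_n)\ge\eta$ for all large $n\in\mathfrak n$; hence $\sharp\partial F_n/\sharp F_n\to 0$. Passing to the limit along $(n_j)$, continuity of $T_*$ yields $T_*\mu_{n_j}^{F_{n_j}}\to T_*\mu$, and the estimate above then forces $\int\phi\,d(T_*\mu)=\int\phi\,d\mu$ for every $\phi\in C(X)$, i.e. $T_*\mu=\mu$, which is the claim.

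The only genuine point to watch is that the F\"olner property invoked here is the a priori weaker one $\overline d^{\mathfrak n}(\partial\mathcal F)=0$ rather than $\sharp\partial F_n/\sharp F_n\to 0$; the computation above shows precisely that under the standing hypothesis $\underline d^{\mathfrak n}(\mathcal F)>0$ these two notions coincide, so no additional difficulty arises. Everything else is a routine triangle-inequality estimate, with compactness of $\mathcal M(X)$ ensuring that a limit exists in the first place.
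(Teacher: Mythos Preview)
Your proof is correct and follows essentially the same approach as the paper: both express the invariance defect $T_*\mu_n^{F_n}-\mu_n^{F_n}$ as a sum over the symmetric difference $(F_n+1)\Delta F_n$, bound it by $\sharp\partial F_n/\sharp F_n$ times $\|\phi\|_\infty$, and then invoke $\overline d^{\mathfrak n}(\partial\mathcal F)=0$ together with $\underline d^{\mathfrak n}(\mathcal F)>0$ to conclude. Your explicit remark that these two hypotheses combine to give the usual F\"olner ratio $\sharp\partial F_n/\sharp F_n\to 0$ is exactly the point the paper uses implicitly in its final chain of inequalities.
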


\begin{proof}
Let $\mathfrak n'$ be a subsequence of $\mathfrak n$ such that $\left(\mu_n^{F_n}\right)_{n\in \mathfrak n'}$ is converging to some $\mu'$.  It is enough to check that 
$\left|\int \phi\, d\mu_n^{F_n}-\int \phi\circ T\, d\mu_n^{ F_n}\right|$ goes to zero  for when $\mathfrak n'\ni n\rightarrow +\infty$ for any $\phi:X\rightarrow \mathbb R$  continuous.
%, where $\tau :\mathcal P_\N\circlearrowleft$ is the unit translation,  i.e. $\tau F:=\{k+1,\, k\in F\}$  for any $F\in \mathcal P_\N$.
 
 This follows from 
\begin{align*}
\int \phi\, d\mu_n^{F_n}-\int \phi\circ T\, d\mu_n^{ F_n}&=\frac{1}{\sharp F_n} \int \left(\sum_{\stackrel{k+1\in F_n}{k\notin F_n}}\phi\circ T^{k}- \sum_{\stackrel{k+1\notin F_n}{k\in F_n}} \phi\circ T^k\right) d\, \mu_n,\\
\left|\int \phi\, d\mu_n^{F_n}-\int \phi\circ T\, d\mu_n^{F_n}\right| & \leq \sup_{x\in X}|\phi(x)| \frac{\sharp\partial F_n}{\sharp F_n},\\
\liminf_{n\in \mathfrak n} \left|\int \phi\, d\mu_n^{F_n}-\int \phi\circ T\, d\mu_n^{F_n}\right| &\leq  \sup_{x\in X}|\phi(x)| \liminf_{n\in \mathfrak n}\frac{\sharp\partial F_n}{\sharp F_n}, \\
&\leq \sup_{x\in X}|\phi(x)|  \frac{d^{\mathfrak n}(\partial \mathcal F) }{ \underline{d}^{\mathfrak n}(\mathcal F)}= 0.
\end{align*}

 \end{proof}
 
%\begin{rem}Let  $\mathcal G =(G_n)_{n\in \mathfrak n}\in \mathcal Q_\mathfrak n$ with $F_n\subset G_n$ for all $n\in\mathfrak n$ and $\liminf_{\mathfrak n} \frac{\sharp F_n}{\sharp G_n}>0$. If $\left(\mu_n^{F_n}\right)_{n\in \mathfrak n}$ and  $\left(\mu_n^{G_n}\right)_{n\in \mathfrak n}$ are  converging to $\mu'$ and $\mu''$ respectively, then $\mu'$ is absolutely continuous with respect to $\mu''$. \end{rem} 

\subsection{Subadditive cocycles}
 We fix  a general continuous subadditive process $\Phi=(\phi_n)_{n\in \mathbb N}$ with respect to $(X,T)$, i.e. $\phi_0=0$, $\phi_n:X\rightarrow \mathbb R$ is a continuous function for all $n$ and $\phi_{n+m}\leq \phi_n+\phi_m\circ T^n$ for all $m,n$. In the proof of the main theorem we will only  consider additive cocycles, but we think it could be interesting to consider general subadditive cocycles in other contexts.
 
  Observe that $\Phi^+=(\phi_n^+)_n$, with $\phi_n^+=\max(\phi_n,0)$ for all $n$, is also subadditive. For any $\mu \in \mathcal M(X,T)$, we let $\phi^+(\mu)=\lim_n\frac{1}{n}\int \phi_n^+\, d\mu=\inf_n\frac{1}{n}\int \phi_n^+\, d\mu$ (the existence of the limit follows from the subadditivity property). 
 Recall also that by the subadditive ergodic theorem \cite{King}, the limit $\phi_*(x)=\lim_n\frac{\phi_n(x)}{n}$ exists for $x$ in a set of full measure with respect to any invariant measure $\mu$. When $\phi_*(x)\geq 0$ for $\mu$-almost every point  $x$ with $\mu\in \mathcal M(X,T)$, then we have $\phi(\mu):=\int \phi_*(x)\, d\mu(x)=\phi+(\mu)$. If $\mu$ is moreover ergodic, then $\phi_*(x)=\phi^+(\mu)$ for $\mu$ almost every $x$.

Let  $E:Y\rightarrow\mathcal P_\N$ be a measurable sequence of integers defined on a Borel subset $Y$ of $X$. 
For a set  $F_n\in \mathcal P_n$ with $\partial F_n\subset E(x)$ for some $x\in X$, we may  write $F_n^-$ uniquely as  the finite union of $E(x)$-irreducible   intervals $F_n^-=\bigcup_{\mathsf k\in \mathsf K} \G a_\mathsf k, b_\mathsf k\G$.  Let $n_\mathsf k=b_\mathsf k-a_\mathsf k$ for any $\mathsf k\in \mathsf K$. Then we define 
$$\forall x\in X, \ \phi_E^{F_n}(x):=\sum_{\mathsf k\in \mathsf K}\phi_{n_\mathsf k}(T^{a_\mathsf k}x)$$
When $\Phi$ is additive, i.e. $\Phi_n=\sum_{0\leq k<n}\phi\circ T^k$ for some continuous function $\phi:X\rightarrow \mathbb R$, we always have   $\phi_E^{F_n}(x)= \phi^{F_n}(x):=\sum_{k\in F_n^-}\phi(T^kx)$. 

The set valued map $E$ is said \textbf{$a$-large} with respect to  $\Phi$ for some $a\geq 0$ when we have $\phi_{l-k}(T^kx)\geq (l-k)a$ for all consecutive integers $l> k$ in $E(x)$. 

\begin{lem}\label{cocycle} Let $\Phi$, $F_n$ and $E$ as above. Assume $E$ is $0$-large.
Then for all $x\in X $ and for all integers $n\geq N\geq M$ we have: 

$$\frac{\phi_E^{F_n}(x)}{\sharp F_n}\geq \int \frac{\phi_N^+}{N} \, d\delta_x^{F_n}-\frac{d_n\left (F_n\setminus E_M(x)\right)+Nd_n(\partial F_n)+4M/N}{d_n(F_n)}\sup_y|\phi_1(y)|$$
\end{lem}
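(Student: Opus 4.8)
The plan is, for each $k\in F_n$, to bound $\phi_N^+(T^kx)$ from above by the $E(x)$-irreducible pieces of $F_n^-$ that sit inside the forward window $\llbracket k,k+N\rrbracket$ plus a controlled error, and then to sum over $k\in F_n$. Set $c:=\sup_y|\phi_1(y)|$. From the subadditivity of $\Phi$ one gets that of $\Phi^+$, hence $0\le\phi_N^+\le Nc$ and, whenever an interval is cut into consecutive pieces, $\phi_\ell^+(y)\le\sum_j\phi^+_{\ell_j}(T^{m_j}y)$. Recall that $\partial F_n\subset E(x)$ makes $F_n^-=\bigsqcup_{\mathsf k}\llbracket a_{\mathsf k},b_{\mathsf k}\llbracket$ a disjoint union of $E(x)$-irreducible intervals, that $\phi_E^{F_n}(x)=\sum_{\mathsf k}\phi_{n_{\mathsf k}}(T^{a_{\mathsf k}}x)$, and --- here $0$-largeness enters --- that every summand $\phi_{n_{\mathsf k}}(T^{a_{\mathsf k}}x)\ge 0$ because $a_{\mathsf k},b_{\mathsf k}$ are consecutive in $E(x)$.

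Call an $E(x)$-irreducible interval \emph{long} if its length exceeds $M$, and call $k\in F_n$ \emph{good} if $\llbracket k,k+N\rrbracket\subset F_n$ and neither $k$ nor $k+N$ lies in the strict interior $\rrbracket a_{\mathsf k},b_{\mathsf k}\llbracket$ of a long interval. For a good $k$, the set $E(x)\cap\llbracket k,k+N\rrbracket=\{e_1<\dots<e_r\}$ is nonempty (the gap at $k$ being short, the next point of $E(x)$ occurs within $M\le N$), and
\[
\phi_N^+(T^kx)\le \phi^+_{e_1-k}(T^kx)+\sum_{i=1}^{r-1}\phi^+_{e_{i+1}-e_i}(T^{e_i}x)+\phi^+_{k+N-e_r}(T^{e_r}x).
\]
Consecutive points of $E(x)\cap\llbracket k,k+N\rrbracket$ are consecutive in $E(x)$, so each middle term equals $\phi_{n'}(T^{a'}x)$ for the $E(x)$-irreducible interval $\llbracket a',b'\llbracket\subset\llbracket k,k+N\llbracket$, which lies in $F_n^-$ (the window being inside $F_n$) and is $\ge 0$; the two boundary terms are each $\le Mc$ since $e_1-k$ and $k+N-e_r$ are $\le M$. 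Thus for good $k$,
\[
\phi_N^+(T^kx)\le 2Mc+\sum_{\llbracket a',b'\llbracket\subset\llbracket k,k+N\llbracket}\phi_{n'}(T^{a'}x).
\]
For every other (\emph{bad}) $k$ simply use $\phi_N^+(T^kx)\le Nc$.

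Now sum over $k\in F_n$. Exchanging the order in $\sum_{k\ \mathrm{good}}\sum_{\llbracket a',b'\llbracket\subset\llbracket k,k+N\llbracket}$: a fixed $E(x)$-irreducible interval of $F_n^-$ of length $n'$ is counted for at most $N-n'+1\le N$ values of $k$, and all the summands are $\ge 0$, so this double sum is $\le N\,\phi_E^{F_n}(x)$. For the bad $k$: those whose window leaves $F_n$ lie within distance $N$ of a right endpoint of a connected component of $F_n$, i.e.\ of a point of $\partial F_n$, so there are $\le N\,\sharp\partial F_n$ of them; a $k$ lying strictly inside a long $E(x)$-gap is exactly a point of $F_n\setminus E_M(x)$ (a point of $F_n$ lies in $E_M(x)$ iff it is not strictly inside a long $E(x)$-gap), giving $\le\sharp(F_n\setminus E_M(x))$; and if $k+N$ lies strictly inside a long gap then $k+N\in F_n\setminus E_M(x)$ while $k\mapsto k+N$ is injective, giving again $\le\sharp(F_n\setminus E_M(x))$. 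Collecting,
\[
\sum_{k\in F_n}\phi_N^+(T^kx)\ \le\ N\,\phi_E^{F_n}(x)+2Mc\,\sharp F_n+Nc\big(N\,\sharp\partial F_n+2\,\sharp(F_n\setminus E_M(x))\big),
\]
and dividing by $N\,\sharp F_n$, using $\int\tfrac{\phi_N^+}{N}\,d\delta_x^{F_n}=\tfrac1{N\sharp F_n}\sum_{k\in F_n}\phi_N^+(T^kx)$ and $\sharp F_n=n\,d_n(F_n)$, gives the stated inequality up to the numerical constants; the factor $2$'s in front of $M/N$ and of $d_n(F_n\setminus E_M(x))$ drop to the stated $4$ and $1$ if, for the bad $k$ with $k+N$ inside a long gap, one keeps the window decomposition rather than the crude bound --- a routine refinement.

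The real difficulty is that a subadditive cocycle yields no lower or ``extension'' bound on a single $\phi_N$, so the whole estimate must be carried out by bounding $\phi_N^+(T^kx)$ \emph{from above} through cutting the window at points of $E(x)$, the $0$-largeness of $E$ serving precisely to kill the $(\cdot)^+$ on the interior pieces. The quantitative heart is showing that the bad indices are rare: without identifying $F_n\setminus E_M(x)$ with the points of $F_n$ strictly inside long $E(x)$-gaps, and without the remark that shifting by $N$ sends the remaining bad indices into $F_n\setminus E_M(x)$, one only gets an error of order $Nn/M$, far from the claimed $4M/N$.
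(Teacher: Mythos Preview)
Your argument is correct in spirit and yields a valid inequality, but with the constant $2$ in front of $d_n(F_n\setminus E_M(x))$ rather than the stated $1$; the promised ``routine refinement'' does not quite work as you describe it. If $k+N$ lies strictly inside a long gap $\llbracket a'',b''\llbracket$ (with $e_r=a''$), the residual term $\phi^+_{k+N-a''}(T^{a''}x)$ can be as large as $Nc$, so keeping the window decomposition instead of the crude bound does not by itself reduce the count of bad indices. What does work is to abandon the good/bad dichotomy altogether: decompose \emph{every} window $\llbracket k,k+N\llbracket$ (with $\llbracket k,k+N\rrbracket\subset F_n$) at all available points of $E(x)$, and charge the long-gap portions by \emph{points} rather than by \emph{windows}. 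Each point of $F_n\setminus E_M(x)$ then lies in at most $N$ windows, so after dividing by $N$ the coefficient drops to $1$. This is exactly the accounting the paper achieves through a different summation scheme.

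The paper's proof differs from yours in organization: instead of summing directly over $k\in F_n$, it fixes a residue $k\in\{0,\dots,N-1\}$ and sums $\phi_N^+(T^{k+lN}x)$ over $l$ with $k+lN\in F_n$. For a fixed residue the windows $J_{k,l}=\llbracket k+lN,k+(l+1)N\llbracket$ are pairwise disjoint, so each point of $F_n^-\setminus E_M(x)$ appears in at most one $I_3$ piece; this disjointness is what produces the coefficient $1$ directly, without any refinement. The boundary pieces $I_4$ (short gaps straddling an endpoint of $J_{k,l}$) contribute at most $2M$ per window, and there are at most $[n/N]+1$ windows per residue, giving the $4M/N$ term after summing over residues and dividing by $N$. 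Your direct summation over $k\in F_n$ trades this for the slightly sharper $2M/N$ on that term, at the cost of the factor $2$ elsewhere. Both bounds are equally adequate for the application in Proposition~\ref{pour}, where only the vanishing of the error as $n\to\infty$, then $N\to\infty$, then $M\to\infty$ matters.
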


\begin{proof}
Let $k\in \{0,\cdots, N-1\}$ and $l\in \N$. The interval of integers $J_{k,l}=\G k+lN,k+(l+1)N\G$ may be written as 
$$J_{k,l}=I_1\coprod I_2\coprod I_3\coprod I_4$$ where $I_1$ is the union of  disjoint $E$-irreducible intervals of length less than $M$ contained in $J_{k,l}$, $I_2 \subset \N\setminus F_n^-$, $I_3\subset F_n^-\setminus E_M(x)$ and $I_4$ is the union of at most two subintervals of $E$-irreducible intervals of length less than $M$ containing an extremal point of $J_{k,l}$.  
 
 Therefore for a fixed $k$, by summing over all $l$ with $k+lN\in F_n$ we get as $E$ is $0$-large and $\Phi$ is subadditive:

\begin{align*}
&\sum_{l, \ k+lN\in F_n }\phi_N^+(T^kx) \\
&  \leq \sum_{\mathsf k\in \mathsf K, \ \G a_\mathsf k, b_\mathsf k\G \cap (k+N\N)=\emptyset }\phi_{n_\mathsf k}(T^{a_\mathsf k}x)+\sup_y |\phi_1(y)| \left(N\sharp \partial F_n+\sharp \left(F_n\setminus E_M(x)\right)+2M([n/N]+1)\right) \\
& \leq
\phi_E^{F_n}(x) +\sup_y |\phi_1(y)| \left(N\sharp \partial F_n+\sharp \left( F_n\setminus E_M(x)\right)+2M([n/N]+1)\right).
\end{align*}
Then by summing over all $k\in \{0,\cdots, N-1\}$ and dividing by $N$, we conclude that 
$$\sharp F_n\int \frac{\phi_N^+}{N} \, d\delta^{F_n}_x\leq  \phi_E^{F_n}(x) +\sup_y |\phi_1(y)| \left(N\sharp \partial F_n+\sharp F_n\setminus E_M(x)+2M([n/N]+1)\right).$$

\end{proof}

\subsection{Positive exponent of empirical measures for additive cocycles}

We consider here an additive  cocycle $\Phi$ associated to a continuous  function $\phi:X\rightarrow \mathbb R$. With the notations of Lemma \ref{measurable} and Lemma \ref{fol}, we have :

\begin{lem}\label{large}
Let  $(\mu_n)_{n\in \mathfrak n}$ with $\mu_n(A_n)=1$ for all $n\in \mathfrak n$. 
Assume  $E$  is $a$-large with $a>0$.  Then  for any weak-$*$ limit $\mu$ of $\mu_n^{F_n}$ we have 
$$\phi_*(x)\geq a\text{ for } \mu \text{ a.e. }x. $$
\end{lem}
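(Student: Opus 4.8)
The idea is to apply Lemma \ref{cocycle} to the subadditive cocycle $\Psi=(\psi_n)_n$ defined by $\psi_n:=a\cdot n-\phi_n$ (which is subadditive, indeed additive, since $\phi_n$ is additive), and to exploit that $E$ being $a$-large with respect to $\Phi$ is precisely the statement that $E$ is $0$-large with respect to $\Psi$. Then Lemma \ref{cocycle} gives, for all $x\in A_n$ and all $n\ge N\ge M$,
\begin{align*}
\frac{\psi_E^{F_n}(x)}{\sharp F_n}\ge \int \frac{\psi_N^+}{N}\,d\delta_x^{F_n}-\frac{d_n(F_n\setminus E_M(x))+N d_n(\partial F_n)+4M/N}{d_n(F_n)}\sup_y|\psi_1(y)|.
\end{align*}
Since $\psi_N^+\ge 0$ the first term on the right is nonnegative, so $\psi_E^{F_n}(x)/\sharp F_n$ is bounded below by minus the error term. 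Because $\Psi$ is additive, $\psi_E^{F_n}(x)=\psi^{F_n}(x)=\sum_{k\in F_n^-}\psi_1(T^kx)=a\,\sharp F_n^- - \sum_{k\in F_n^-}\phi(T^kx)$, so dividing by $\sharp F_n$ and integrating against $\mu_n$ (which is carried by $A_n$) yields
\begin{align*}
a\,\frac{\sharp F_n^-}{\sharp F_n}-\int \frac{\phi^{F_n}}{\sharp F_n}\,d\mu_n\ge -\left(\sup_{x\in A_n}d_n(F_n\setminus E_M(x))+N d_n(\partial F_n)+\frac{4M}{N}\right)\frac{\sup_y|\psi_1(y)|}{d_n(F_n)}.
\end{align*}

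\textbf{Passing to the limit.} Here $\int \phi^{F_n}/\sharp F_n\,d\mu_n = \frac{1}{\sharp F_n}\sum_{k\in F_n^-}\int \phi\circ T^k\,d\mu_n$, which differs from $\int\phi\,d\mu_n^{F_n}=\frac{1}{\sharp F_n}\sum_{k\in F_n}\int\phi\circ T^k\,d\mu_n$ by at most $(\sharp\partial F_n/\sharp F_n)\sup|\phi|$, hence by a term tending to $0$ by the F\"olner property together with $\underline d^{\mathfrak n}(\mathcal F)>0$; for the same reason $\sharp F_n^-/\sharp F_n\to 1$ along $\mathfrak n$. Fix $N$ and $M$ and let $n\to+\infty$ along the subsequence realizing the weak-$*$ limit $\mu$ of $\mu_n^{F_n}$: using $d_n(F_n)\ge \underline d^{\mathfrak n}(\mathcal F)>0$, that $d_n(\partial F_n)\to 0$ (F\"olner), and that $\limsup_n\sup_{x\in A_n}d_n(F_n\setminus E_M(x))\to 0$ as $M\to\infty$ (Lemma \ref{measurable}), we obtain $a-\int\phi\,d\mu\ge -C(M/N+\eta_M)$ for a constant $C$ and a quantity $\eta_M\to 0$; letting first $N\to\infty$ and then $M\to\infty$ gives $\int\phi\,d\mu\ge a$. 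Since $\mu$ is $T$-invariant by Lemma \ref{fol}, this already shows $\int\phi_*\,d\mu=\int\phi\,d\mu\ge a$.

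\textbf{From the integral bound to the pointwise bound.} The inequality $\int\phi_*\,d\mu\ge a$ is weaker than the claimed $\phi_*(x)\ge a$ $\mu$-a.e.; to upgrade it I would apply the argument above not to $\mu$ itself but to a general time interval. Concretely, for every $j\ge 1$ one runs the same estimate with $N$ a multiple of $j$, or equivalently replaces $\phi$ by the $j$-Birkhoff sum and $T$ by $T^j$, to get $\int \phi_*\,d\mu'\ge a$ for every invariant $\mu'$ that arises as such a limit along a sub-F\"olner structure; more efficiently, note that for any $T$-invariant set $P$ with $\mu(P)>0$ the conditional measure $\mu(\cdot\mid P)$ is again invariant and, because the sets $A_n$ can be intersected with (a neighborhood basis of) $P$ without destroying the density hypotheses, one gets $\int_P \phi_*\,d\mu\ge a\,\mu(P)$ for every such $P$; applying this to $P=\{\phi_*<a-\varepsilon\}$ (which is invariant, since $\phi_*$ is $T$-invariant where defined) forces $\mu(P)=0$ for every $\varepsilon>0$, i.e. $\phi_*\ge a$ $\mu$-a.e. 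The cleanest route, and the one I would actually write, is to observe that Lemma \ref{cocycle} is applied with arbitrary $N$, and to integrate the bound against $\delta_x^{F_n}$ in a way that survives the ergodic decomposition of $\mu$: since $\int\frac{\phi_N^+}{N}\,d\mu\le \frac1N\int\phi_N^+\,d\mu\to \max(\phi_*,0)$-average, the chain of inequalities localizes on each ergodic component, yielding $\phi^+(\nu)\ge a>0$ hence $\phi_*=\phi^+(\nu)\ge a$ on a.e. ergodic component $\nu$, and therefore $\mu$-a.e.

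\textbf{Main obstacle.} The genuinely delicate point is the last upgrade from $\int\phi\,d\mu\ge a$ to the pointwise statement $\phi_*\ge a$ $\mu$-a.e.: this requires that the F\"older–Gibbs construction of Lemma \ref{measurable} be compatible with conditioning on invariant sets (or, equivalently, that the bound of Lemma \ref{cocycle} be stable enough to pass to ergodic components), and one must be careful that shrinking $A_n$ to see an invariant set $P$ does not violate the lower bound $\lambda(A_n)\ge e^{-n\delta_n}/n^2$ or the density conditions $\underline d^{\mathfrak n}(\mathcal F)\ge\beta$. Everything else is a routine juggling of the three small error terms $d_n(F_n\setminus E_M(x))$, $N d_n(\partial F_n)$ and $4M/N$ against the fixed lower bound $d_n(F_n)\ge\beta$, taking limits in the order $n\to\infty$, then $N\to\infty$, then $M\to\infty$.
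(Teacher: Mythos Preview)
Your proposal contains a sign error and, more seriously, a genuine gap in the final step.

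\textbf{The sign error.} With $\psi_n:=a\cdot n-\phi_n$, the condition ``$E$ is $0$-large for $\Psi$'' reads $\psi_{l-k}(T^kx)\ge 0$, i.e.\ $\phi_{l-k}(T^kx)\le (l-k)a$; but ``$E$ is $a$-large for $\Phi$'' says $\phi_{l-k}(T^kx)\ge (l-k)a$. These are opposite inequalities, so Lemma~\ref{cocycle} is not applicable to your $\Psi$. The fix is trivial: take $\psi_n:=\phi_n-an$ instead. With that correction the chain of inequalities runs correctly (your displayed inequality also has the conclusion reversed: $a-\int\phi\,d\mu\ge 0$ gives $\int\phi\,d\mu\le a$, not $\ge a$; the sign swap in $\psi$ repairs this too) and you do obtain $\int\phi\,d\mu\ge a$.

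\textbf{The real gap.} The passage from $\int\phi_*\,d\mu\ge a$ to $\phi_*\ge a$ $\mu$-a.e.\ is not just ``delicate'', it is not achieved by either of your sketches. The measure $\mu$ is a \emph{fixed} weak-$*$ limit of $(\mu_n^{F_n})_n$ built from the \emph{given} sets $A_n$; you cannot retroactively ``intersect $A_n$ with a neighborhood of $P$'' and expect the same limit, nor do you have any control over which invariant measures arise that way. Likewise, the ergodic-decomposition route fails: the integral bound $\int\phi_*\,d\mu\ge a$ is perfectly compatible with some ergodic components having $\phi_*<a$ and others $\phi_*>a$. Nothing in Lemma~\ref{cocycle} ``localizes on each ergodic component'' --- the estimate is global over $A_n$, and an individual ergodic component of $\mu$ need not be a limit of any $\mu_n'^{F_n}$ with $\mu_n'$ supported on a subset of $A_n$.

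\textbf{How the paper proceeds.} The paper avoids this obstruction by proving a \emph{measure} bound rather than an integral bound: for any $\alpha<1$ and $\epsilon>0$ there exist arbitrarily large $N$ with
\[
\limsup_{n\in\mathfrak n}\mu_n^{F_n}\bigl(\{\phi_N/N\ge\alpha a\}\bigr)\ge 1-\epsilon.
\]
This is obtained by the same interval decomposition $J_{k,l}=I_1\cup I_2\cup I_3\cup I_4$ as in Lemma~\ref{cocycle}, but one \emph{counts} the non-admissible intervals (those with $\phi_N(T^{k+lN}x)/N<\alpha a$) instead of summing a cocycle over them: on a non-admissible $J_{k,l}$ one must have $\sharp(I_2\cup I_3\cup I_4)\ge (1-\alpha)aN/(a+\sup|\phi|)$, and summing this lower bound shows there are at most $\epsilon\,\sharp F_n$ non-admissible intervals. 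Since $\{\phi_N/N\ge\alpha a\}$ is closed, the bound passes to $\mu$, and a Borel--Cantelli choice of $N_k\to\infty$ gives $\limsup_k\phi_{N_k}/N_k\ge\alpha a$ on a set of $\mu$-measure $\ge 1-2\epsilon$, hence $\phi_*\ge a$ $\mu$-a.e. The key conceptual difference is that a level-set estimate survives weak-$*$ limits and gives pointwise information, whereas your integral estimate does not.
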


\begin{proof}
We claim that for any $0<\alpha<1$ and  any $\epsilon>0$, there is arbitrarily large  $N_0$ such that  \begin{eqnarray}\label{zut}\limsup_n\mu_n^{F_n}(\phi_{N_0}/N_0\geq \alpha a)\geq 1-\epsilon.
\end{eqnarray}
By weak-$*$ convergence of $\mu_n$ to $\mu$, it will imply, the set $\{\phi_{N_0}/N_0\geq \alpha a\}$ being closed :
$$\mu(\phi_{N_0}/N_0\geq \alpha a)\geq 1-\epsilon.$$
Then we may consider a sequence $(N_k)_k$ going to infinity such that 
$$\mu(\phi_{N_k}/N_k\geq \alpha a)\geq 1-\epsilon/2^k.$$
Therefore $\mu\left(\bigcap_k\{\phi_{N_k}/N_k\geq \alpha a\}\right)\geq 1-2\epsilon$. We conclude $\limsup_n\frac{\phi_n(x)}{n}\geq \alpha a$ for $\mu$ a.e. $x$ by letting $\epsilon$ go to zero.

Let us show now our first claim  (\ref{zut}). It is enough to show the inequality  for $\mu_n=\delta_x$ uniformly in $x\in A_n$. We use the same notations as in the  proof of Lemma \ref{cocycle}.  Fix $x\in A_n$. For $k,l$ with $k+lN\in F_n$, the interval $J_{k,l}
$ is said admissible, when $\phi_{N}(f^{k+lN}x)/N\geq \alpha a$. 
If $J_{k,l}$ is not admissible we have 
\begin{align*}
\phi_{N}(f^{k+lN}x)&\geq \sum_{i\in I_1}\phi(f^ix)-  \sup_{y}|\phi(y)| \sharp (I_2\cup I_3\cup I_4), \\
\alpha a N &\geq\ a\sharp I_1-  \sup_{y}|\phi(y)| \sharp (I_2\cup I_3\cup I_4),\\
&\geq aN- (a+\sup_{y}|\phi(y)|) \sharp (I_2\cup I_3\cup I_4), \\
\sharp  (I_2\cup I_3\cup I_4) & \geq \frac{(1-\alpha)aN}{a+\sup_{y}|\phi(y)|}.
\end{align*}
 If we sum over all $l$ with $k+lN\in F_n$ and then over $k\in \{0,\cdots, N-1\}$, we get by arguing as in the  proof of Lemma \ref{cocycle} :
\begin{align*} 
 N \Big( N\sharp \partial F_n+\sharp\left(F_n\setminus E_M(x)\right)+2M([n/N]+1)\Big)\geq \\\sharp\{J_{k,l}\text{ not admissible}, \ k+lN\in F_n\}\times \frac{(1-\alpha)aN}{a+\sup_{y}|\phi(y)|}.
 \end{align*}
Therefore by Lemma \ref{measurable} (third and fourth items) we have  for $\mathfrak n \ni n\gg N \gg M$ uniformly in $x\in A_n$,
$$\sharp\{J_{k,l} \text{ not admissible}, \ k+lN\in F_n\}\leq \epsilon \sharp F_n.$$
By definition of admissible intervals we conclude that 
$$\limsup_n\delta_x^{F_n}(\phi_{N}/N\geq \alpha a)\geq 1-\epsilon.$$
\end{proof}

\subsection{Entropy of empirical measures}
Following Misiurewicz's proof of the variational principle, we estimate the entropy of empirical measures from below. For a finite partition $P$ of $X$ and  a finite subset $F$ of $\N$, we let $P^F$ be the iterated partition $P^F=\bigvee_{k\in F}f^{-k}P$. When $F=\G 0,n-1\R$, $n\in \N$, we just let $P^{F}=P^n$. We denote by $P(x)$ the element of $P$ containing $x\in X$.

For a Borel probability measure $\mu$ on $X$, the static entropy $H_\mu(P)$ of $\mu$ with respect to a (finite measurable) partition $P$  is defined as follows:
\begin{align*}
H_\mu(P)&=-\sum_{A\in P}\mu(A)\log \mu(A),\\
&=-\int \log \mu\left(P(x)\right)\, d\mu(x).
\end{align*}
When $\mu$ is $T$-invariant, we recall that  the measure theoretical entropy of $\mu$ with respect to $P$ is then   $$h_\mu(P)=\lim_n\frac{1}{n}H_{\mu}(P^n)$$ 
and the entropy $h(\mu)$ of $\mu$ is 
$$h(\mu)=\sup_Ph_\mu(P).$$

We will use the  two following standard properties of the static entropy\cite{dow}:
\begin{itemize}
\item for a fixed partition $P$, the map $\mu\mapsto H_\mu(P)$ is concave on $\mathcal M(X)$, 
\item for two partitions $P$ and $Q$, the joined partition $P\vee Q$ satisfies 
\begin{align}\label{ziz}
H_{\mu}(P\vee Q)&\leq H_\mu(P)+H_\mu(Q). 
\end{align}
\end{itemize}

\begin{lem}\label{comput}Let $\mathcal F=(F_n)_{n\in \mathfrak n}$ be a F\"olner sequence with $\underline{d}^{\mathfrak n}(\mathcal F)>0$. For any   measurable finite partition $P$ and $m\in \N^*$, there exist   a sequence $(\epsilon_n)_{n\in \mathfrak n}$ converging to $0$ 
such that 
$$\forall n\in \mathfrak n, \ \frac{1}{m}H_{\mu_n^{F_n}}(P^m)\geq \frac{1}{\sharp F_n}H_{\mu_n}(P^{F_n})+\epsilon_n.$$
\end{lem}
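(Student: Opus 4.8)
The plan is to adapt Misiurewicz's argument for the lower bound in the variational principle, now carried out over the combinatorially irregular set $F_n$ rather than over an interval $\llbracket 0, n-1 \rrbracket$. First I would fix the partition $P$ and the integer $m$, and write $F_n^-$ as a union of $E(x)$-irreducible intervals; but actually for this lemma we do not need the geometric set at all — only the F\"olner property of $\mathcal F$ and $\underline d^{\mathfrak n}(\mathcal F) > 0$ matter. The key combinatorial device is the standard one: for each residue $j \in \llbracket 0, m-1 \rrbracket$ partition $\llbracket 0, n \rrbracket$ into the blocks $\llbracket j + \ell m, j + (\ell+1)m \rrbracket$, and use the subadditivity inequality \eqref{ziz} to compare $H_{\mu_n}(P^{F_n})$ with sums of $H_{\mu_n}\big(f^{-a}P^m\big) = H_{f^a_*\mu_n}(P^m)$ over the starting points $a$ of those blocks that lie appropriately inside $F_n$. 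Averaging over $j$ introduces the factor $1/m$, and each block contributes $H_{\mu_n}(P^m \circ f^{a})$; the blocks that straddle the ``boundary'' of $F_n$ (i.e. indices $k$ with $k \in F_n$, $k+1 \notin F_n$ or vice versa) number at most $\sharp \partial F_n$ per residue class, hence contribute an error bounded by (number of bad blocks) $\times \log \sharp P$.

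Next I would convert $\frac{1}{\sharp F_n}\sum_{k \in F_n} H_{f^k_* \mu_n}(P^m)$ into $H_{\mu_n^{F_n}}(P^m)$ using concavity of $Q \mapsto H_Q(P^m)$ on $\mathcal M(X)$: since $\mu_n^{F_n} = \frac{1}{\sharp F_n}\sum_{k \in F_n} f^k_* \mu_n$, concavity gives $H_{\mu_n^{F_n}}(P^m) \ge \frac{1}{\sharp F_n}\sum_{k \in F_n} H_{f^k_*\mu_n}(P^m)$. Combining the two displays, I get
\[
\frac{1}{m} H_{\mu_n^{F_n}}(P^m) \ \ge\ \frac{1}{\sharp F_n} H_{\mu_n}(P^{F_n}) - \frac{C_n \log \sharp P}{\sharp F_n}
\]
where $C_n$ counts the boundary blocks over all $m$ residues, so $C_n \le \sharp\partial F_n + (\text{a fixed } O(m) \text{ term from the two ends of }\llbracket 0,n\rrbracket)$. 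Setting $\epsilon_n := -\,C_n \log\sharp P / \sharp F_n$, the F\"olner hypothesis $\overline d^{\mathfrak n}(\partial \mathcal F) = 0$ together with $\underline d^{\mathfrak n}(\mathcal F) > 0$ forces $\sharp \partial F_n / \sharp F_n \to 0$, and $m/\sharp F_n \to 0$ since $\sharp F_n \ge \tfrac12 n\,\underline d^{\mathfrak n}(\mathcal F)$ eventually; hence $\epsilon_n \to 0$ as required.

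The main obstacle — really the only delicate point — is bookkeeping the boundary terms correctly: one must be careful that $P^{F_n}$, the join over the possibly disconnected set $F_n$, is dominated (up to the join of at most $\sharp\partial F_n + O(m)$ single copies of $P$, which costs at most that many times $\log\sharp P$) by the join of the full-length blocks $P^m \circ f^{a}$ taken over starting points $a = j + \ell m$ such that $\llbracket a, a+m\llbracket \subset F_n$, for a suitable choice (or averaging) of the residue $j$. Writing $P^{F_n} \le \big(\bigvee_{a} f^{-a} P^m\big) \vee \big(\bigvee_{k \in \text{bad}} f^{-k}P\big)$ and applying \eqref{ziz} repeatedly handles this; then one sums over $j \in \llbracket 0, m-1\rrbracket$ and divides by $m$. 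Everything else is the routine concavity/subadditivity manipulation, and the convergence $\epsilon_n \to 0$ is immediate from the standing F\"olner and positive-density assumptions.
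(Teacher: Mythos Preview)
Your argument is correct and goes through, but the bookkeeping claim $C_n \le \sharp\partial F_n + O(m)$ undercounts by a factor of roughly $m$: for each residue class there are at most $\sharp\partial F_n$ bad blocks, and each bad block contains up to $m$ bad indices of $F_n$, so after averaging over $j\in\G 0,m-1\R$ one finds $C_n \le (m-1)\sharp\partial F_n$ (equivalently, $\sharp F_n - \sharp\{a:\G a,a+m\G\subset F_n\}\le (m-1)K$ with $K$ the number of components). Since $m$ is fixed this does not affect the conclusion $\epsilon_n\to 0$.

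The paper takes a more modular route. Rather than rerunning the residue-class decomposition over the disconnected set $F_n$, it writes $F_n=\coprod_{k=1}^K F_n^k$ as the union of its $K\le\sharp\partial F_n$ connected components, applies concavity of $\mu\mapsto H_\mu(P^m)$ to pass from $\mu_n^{F_n}$ to the convex combination $\sum_k\frac{\sharp F_n^k}{\sharp F_n}\mu_n^{F_n^k}$, invokes the classical interval Misiurewicz inequality $\frac{1}{m}H_{\mu_n^{F_n^k}}(P^m)\ge\frac{1}{\sharp F_n^k}H_{\mu_n}(P^{F_n^k})-\frac{3m\log\sharp P}{\sharp F_n^k}$ on each component as a black box, and then uses subadditivity (\ref{ziz}) to recombine $\sum_k H_{\mu_n}(P^{F_n^k})\ge H_{\mu_n}(P^{F_n})$. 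This yields $\epsilon_n=-3mK\log\sharp P/\sharp F_n$. Your direct approach and the paper's reduction-to-intervals give the same order of error $m\,\sharp\partial F_n/\sharp F_n$; the paper's version has the advantage that all the block combinatorics is hidden inside the cited interval case, while yours makes the mechanism more transparent but requires tracking the bad indices by hand.
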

\begin{proof}
When $F_n$ is an interval of integers, we have \cite{Mis} :
\begin{equation}\label{mi}\frac{1}{m}H_{\mu_n^{F_n}}(P^m)\geq \frac{1}{\sharp F_n}H_{\mu_n}(P^{F_n})-\frac{3m\log \sharp P}{\sharp F_n}.
\end{equation}
Consider a general set $F_n\in \mathcal P_n$.  We decompose $F_n$ into connected components  $F_n=\coprod_{k=1,\cdots, K } F_n^k$.  Observe $K\leq \sharp \partial F_n$.  Then we get  :
\begin{align*}
\frac{1}{m}H_{\mu_n^{F_n}}(P^m)&\geq \sum_{k=1}^K\frac{\sharp F_n^k}{m\sharp F_n}H_{\mu_n^{F^k_n}}(P^m), \text{by concavity of $\mu \mapsto H_{\mu}(P^m)$}\\
&\geq  \frac{1}{\sharp F_n}\sum_{k=1}^KH_{\mu_n}(P^{F_n^k})-\frac{3mK\log \sharp P}{\sharp F_n},\text{ by applying (\ref{mi}) to each $F_n^k$},\\
&\geq \frac{1}{\sharp F_n}H_{\mu_n}(P^{F_n})-3m\log \sharp P\frac{\sharp \partial F_n}{\sharp F_n}, \text{ according to  (\ref{ziz})}.
\end{align*}
This concludes the proof with $\epsilon_n=3m\frac{\sharp \partial F_n}{\sharp F_n} \log \sharp P$, because $\mathcal F$ is a F\"olner sequence with $\underline{d}^\mathfrak n(\mathcal F)>0$.
\end{proof}

With the notations of Lemma \ref{measurable} we let $\mu_n$ be the probability measure induced by $\lambda$ on $A_n$, i.e. $\mu_n=\frac{\lambda(A_n\cap \cdot)}{\lambda(A_n)}$. Let $\Psi=(\psi_n)_n$ be a continuous subadditive process such that $E$ is $0$-large with respect to $\Psi$. We assume that $\lambda$ satisfies the following \textit{F\"olner Gibbs property } with respect to the subadditive cocycle $\Psi$ :
\begin{align}\label{hyp}
&\text{There exists $\epsilon>0$ such that }  \nonumber \\
& \text{we have for any partition $P$ with  diameter less than $\epsilon$ :}\tag{$\text{G}$}\\
 & \exists N \ \forall x\in A_n \textrm{ with } N<n\in \mathfrak{n}, \  \ \  \frac{1}{\lambda\left(P^{F_n}(x)\cap A_n\right)}\geq e^{\psi_E^{F_n}(x)}. \nonumber
\end{align}

\begin{prop}\label{pour}
Under the above hypothesis (\ref{hyp}), any weak-$*$ limit $\mu$ of $(\mu_n^{F_n})_{n\in \mathfrak n}$ satisfies 
$$h(\mu)\geq \psi^+(\mu).$$
\end{prop}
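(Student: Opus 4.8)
The plan is to combine the entropy estimate of Lemma \ref{comput} with the F\"olner Gibbs property (\ref{hyp}) and the subadditive cocycle control of Lemma \ref{cocycle}. First I would fix a finite measurable partition $P$ of diameter less than the $\epsilon$ provided by (\ref{hyp}) whose boundary is $\mu$-null (so that $\mu \mapsto H_\mu(P^m)$ is continuous at $\mu$ along the relevant subsequence); such a partition exists by the standard argument of choosing cells with boundaries of measure zero. By Lemma \ref{comput}, for each $m\in\N^*$ and each $n\in\mathfrak n$ we have $\frac{1}{m}H_{\mu_n^{F_n}}(P^m)\geq \frac{1}{\sharp F_n}H_{\mu_n}(P^{F_n})+\epsilon_n$ with $\epsilon_n\to 0$. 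So the heart of the matter is to bound $\frac{1}{\sharp F_n}H_{\mu_n}(P^{F_n})$ from below by something converging to $\psi^+(\mu)$.

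For the lower bound on the static entropy, I would write $H_{\mu_n}(P^{F_n}) = -\int \log \mu_n\big(P^{F_n}(x)\big)\, d\mu_n(x)$, and observe that $\mu_n\big(P^{F_n}(x)\big) = \frac{\lambda\left(P^{F_n}(x)\cap A_n\right)}{\lambda(A_n)}$. Hypothesis (\ref{hyp}) gives $\lambda\left(P^{F_n}(x)\cap A_n\right)^{-1}\geq e^{\psi_E^{F_n}(x)}$ for all $x\in A_n$ once $n$ is large, hence
$$
\frac{1}{\sharp F_n}H_{\mu_n}(P^{F_n}) \;\geq\; \frac{1}{\sharp F_n}\int \psi_E^{F_n}(x)\, d\mu_n(x) \;+\; \frac{\log \lambda(A_n)}{\sharp F_n}.
$$
The second term is controlled because $\lambda(A_n)\geq e^{-n\delta_n}/n^2$ with $\delta_n\to 0$ (Lemma \ref{measurable}) while $\sharp F_n = n\, d_n(F_n)$ with $\liminf_n d_n(F_n)\geq\beta>0$, so $\frac{\log\lambda(A_n)}{\sharp F_n}\to 0$. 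For the first term I apply Lemma \ref{cocycle} (with $\Phi=\Psi$, which is $0$-large by assumption): for $n\geq N\geq M$,
$$
\frac{\psi_E^{F_n}(x)}{\sharp F_n}\;\geq\; \int \frac{\psi_N^+}{N}\, d\delta_x^{F_n} \;-\; \frac{d_n\left(F_n\setminus E_M(x)\right)+N d_n(\partial F_n)+4M/N}{d_n(F_n)}\sup_y|\psi_1(y)|.
$$
Integrating against $\mu_n$ and using that $\delta_x^{F_n}$ integrated over $\mu_n$ equals $\mu_n^{F_n}$, this yields
$$
\frac{1}{\sharp F_n}\int \psi_E^{F_n}\, d\mu_n \;\geq\; \int \frac{\psi_N^+}{N}\, d\mu_n^{F_n} \;-\; \frac{\sup_{x\in A_n}d_n\left(F_n\setminus E_M(x)\right)+N d_n(\partial F_n)+4M/N}{d_n(F_n)}\sup_y|\psi_1(y)|.
$$

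Now I take limits in the right order. Fix $m$, $M$ and $N$ with $N\geq M$ a multiple of $m$ (or simply keep $m$ fixed and send $n\to\infty$ first). Along the subsequence $\mathfrak n'$ realizing the weak-$*$ limit $\mu$, letting $n\to\infty$: the term $\int\frac{\psi_N^+}{N}d\mu_n^{F_n}\to\int\frac{\psi_N^+}{N}d\mu$ by continuity of $\psi_N^+$; the error term tends to $\frac{\limsup_n \sup_{x\in A_n}d_n(F_n\setminus E_M(x)) + 4M/N}{\beta}\sup_y|\psi_1(y)|$ since $d_n(\partial F_n)\to 0$ ($\mathcal F$ F\"olner with $\underline d^{\mathfrak n}(\mathcal F)>0$) and $d_n(F_n)\to d$ with $d\geq\beta$; and $\epsilon_n\to 0$, $\frac{\log\lambda(A_n)}{\sharp F_n}\to 0$. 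Combining everything and using $h(\mu)\geq h_\mu(P)=\inf_m\frac1m H_\mu(P^m)$... actually more carefully $h_\mu(P)=\lim_m \frac1m H_\mu(P^m)$, so after passing $n\to\infty$ I get $\frac1m H_\mu(P^m)\geq \int\frac{\psi_N^+}{N}d\mu - (\text{error}(M,N)) $; then send $M\to\infty$ so that $\limsup_n\sup_{x\in A_n}d_n(F_n\setminus E_M(x))\to 0$ (uniform density of $E$ in $\mathcal F$, Lemma \ref{measurable}), leaving error $\frac{4M/N}{\beta}\sup_y|\psi_1|$ — here one must be slightly careful and instead send $M\to\infty$ with $N/M\to\infty$, or fix $N$ first then $M\leq N$ then note the $4M/N$ term; cleanest is: for fixed $N$, send $n\to\infty$ then $M\to\infty$ (legitimate since $M\leq N$ bounded), obtaining $\frac1m H_\mu(P^m)\geq \int\frac{\psi_N^+}{N}d\mu - \frac{4M_0/N}{\beta}\sup|\psi_1|$ for any $M_0\leq N$; but actually we need $M\to\infty$ for the $F_n\setminus E_M$ term, so we instead choose $M=M(N)\to\infty$ with $M(N)/N\to 0$, which is compatible. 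Finally send $N\to\infty$: $\int\frac{\psi_N^+}{N}d\mu\to\psi^+(\mu)=\inf_N\frac1N\int\psi_N^+d\mu$ by subadditivity and $m\to\infty$: we obtain $h_\mu(P)\geq \psi^+(\mu)$, hence $h(\mu)\geq\psi^+(\mu)$.

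The main obstacle is bookkeeping the interleaved limits $n\to\infty$, $M\to\infty$, $N\to\infty$, $m\to\infty$ so that every error term vanishes: the $4M/N$ term forces $M$ to grow slower than $N$, while the $d_n(F_n\setminus E_M(x))$ term needs $M\to\infty$, and these are reconciled by choosing $M=M(N)$ with $M(N)\to\infty$ and $M(N)/N\to 0$ before letting $N\to\infty$. One also needs the minor point that $\psi_N^+$ is continuous (so $\{\psi_N^+/N \geq \cdot\}$ behaves well under weak-$*$ limits) and that $\frac1m H_\mu(P^m)$ decreases to $h_\mu(P)$, so passing to the limit in $m$ is automatic once the bound holds for every $m$; the subadditivity of $\Psi$ guarantees $\frac1N\int\psi_N^+\,d\mu\downarrow\psi^+(\mu)$.
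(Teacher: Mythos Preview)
Your proposal is correct and follows essentially the same route as the paper's proof: choose a partition $P$ with $\mu(\partial P)=0$ and small diameter, combine Lemma \ref{comput} with the identity $\mu_n(P^{F_n}(x))=\lambda(P^{F_n}(x)\cap A_n)/\lambda(A_n)$ and hypothesis (\ref{hyp}), then invoke Lemma \ref{cocycle} and pass to the successive limits. The only cosmetic difference is the handling of the limit order: you introduce a diagonal choice $M=M(N)$ with $M(N)/N\to 0$, whereas the paper simply fixes $M$, sends $N\to\infty$ (killing the $4M/N$ term, while $\int\frac{\psi_N^+}{N}\,d\mu\geq\psi^+(\mu)$ already holds for every $N$ by definition of the infimum), then sends $M\to\infty$ (killing the $\limsup_n\sup_{x\in A_n}d_n(F_n\setminus E_M(x))$ term via Lemma \ref{measurable}), and finally $m\to\infty$; this avoids the bookkeeping you flag as an obstacle. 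You are also more explicit than the paper about why $\frac{\log\lambda(A_n)}{\sharp F_n}\to 0$, which the paper absorbs silently into the step ``by Hypothesis (G)''.
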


\begin{proof}
Without loss of generality we may assume $(\mu_n^{F_n})_{n\in \mathfrak n}$ is converging to  $\mu$. Take  a partition $P$ with $\mu(\partial P)=0$ and with diameter less than $\epsilon$. In particular we have for all fixed $m\in \N $:
\begin{equation*}\frac{1}{m}H_{\mu}(P^m)=\lim_n\frac{1}{m}H_{\mu_n^{F_n}}(P^m).
\end{equation*}
Then  we get for $n\gg N\gg M\gg  m$  

\begin{align*}
\frac{1}{m}H_{\mu}(P^m)\geq & \limsup_{n\in\mathfrak n} \ \frac{1}{\sharp F_n}H_{\mu_n}(P^{F_n}), \textrm{ by Lemma \ref{comput}},\\
\geq &  \limsup_{n\in\mathfrak n} \frac{1}{\sharp F_n}\int \left(- \log \lambda \left( P^{F_n}(x)\cap A_n \right)+\log \lambda(A_n)\right) \, d\mu_n(x),\\
 \geq &  \limsup_{n\in\mathfrak n} \int \frac{\psi_E^{F_n}}{\sharp F_n}  \, d\mu_n(x), \textrm{ by Hypothesis (G)},\\
 \geq & \limsup_{n\in\mathfrak n}  \bigg(\int \frac{\psi_N^+}{N}\, d \mu_n^{F_n}\\
 & -\frac{\sup_{y}|\psi(y)|\left(\sup_{x\in A_n}d_n(F_n\setminus E_M(x))+Nd_n(\partial F_n) +4M/N\right)  }{d_n(F_n)}\bigg), \textrm{ by Lemma \ref{cocycle}},\\
\geq &  \int \frac{\psi_N^+}{N}\,d \mu-\frac{1}{\underline{d}^{\mathfrak n}(\mathcal F)}\left(\sup_{y}|\psi(y)|\left(\limsup_{n\in \mathfrak n}\sup_{x\in A_n}d_n(F_n\setminus E_M(x))+4M/N\right) \right),\\
\geq & \psi^{+}(\mu)-\frac{1}{\underline{d}^{\mathfrak n}(\mathcal F)}\left(\sup_{y}|\psi(y)|\left(\limsup_{n\in \mathfrak n}\sup_{x\in A_n}d_n(F_n\setminus E_M(x))+4M/N\right)\right).
\end{align*}
Letting $N$, then $M$, then $m$ go to infinity,  we conclude that 
\begin{align*}
h(\mu)\geq h_{\mu}(P)
&\geq \psi^{+}(\mu).
\end{align*}
\end{proof}

In the following we will also consider a general\footnote{The set $E$ is not assumed here to be $0$-large with respect to $\Psi$.}  additive cocycle $\Psi=(\psi_n)_n$ associated to a continuous function $\psi:X\rightarrow \mathbb R$. Then  for any F\"olner sequence $(F_n)_n$, the \textit{F\"olner Gibbs property } with respect to the  additive cocycle $\Psi$ may be simply written as follows:
\begin{align}\label{hyp}
&\text{There exists $\epsilon>0$ such that }  \nonumber \\
& \text{we have for any partition $P$ with  diameter less than $\epsilon$ :}\tag{$\text{H}$}\\
 & \exists N \ \forall x\in A_n \textrm{ with } N<n\in \mathfrak{n}, \  \ \  \frac{1}{\lambda\left(P^{F_n}(x)\cap A_n\right)}\geq e^{\psi^{F_n}(x)}. \nonumber
\end{align}

In this additive setting we get : 
\begin{prop}\label{por}
Under the above hypothesis (\ref{hyp}), any weak-$*$ limit $\mu$ of $(\mu_n^{F_n})_{n\in \mathfrak n}$ satisfies 
$$h(\mu)\geq \psi(\mu).$$
\end{prop}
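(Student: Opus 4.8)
The plan is to reduce Proposition~\ref{por} to Proposition~\ref{pour} by reinterpreting the additive cocycle $\Psi=(\psi_n)_n$ associated to $\psi$ as a subadditive cocycle for which the set-valued map $E$ becomes $0$-large after a harmless modification. The obstacle is exactly that in the additive setting we are not assuming $E$ is $0$-large with respect to $\Psi$, whereas Proposition~\ref{pour} needs that hypothesis in order to invoke Lemma~\ref{cocycle}. So the whole point is to manufacture a new subadditive process out of $\psi$ that (i) is $0$-large along the relevant irreducible intervals, (ii) has the same value $\psi^+(\mu)=\psi(\mu)$ on the limit measure, and (iii) satisfies the F\"olner Gibbs property (G) whenever $\psi$ satisfies (H).

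First I would set $c:=\sup_{y\in X}|\psi(y)|$ and define $\tilde\psi_n:=\psi_n+nc\geq 0$ for all $n$, so $\tilde\Psi=(\tilde\psi_n)_n$ is a nonnegative additive cocycle; being nonnegative and additive it is in particular subadditive, and $E$ is trivially $0$-large with respect to $\tilde\Psi$ since $\tilde\psi_{l-k}(T^kx)\geq 0$ always. Next observe that for any $F_n$ with $\partial F_n\subset E(x)$, writing $F_n^-=\bigcup_{\mathsf k\in\mathsf K}\G a_\mathsf k,b_\mathsf k\G$ with $n_\mathsf k=b_\mathsf k-a_\mathsf k$, we get $\tilde\psi_E^{F_n}(x)=\sum_{\mathsf k}\tilde\psi_{n_\mathsf k}(T^{a_\mathsf k}x)=\psi^{F_n}(x)+c\sum_{\mathsf k}n_\mathsf k=\psi^{F_n}(x)+c\,\sharp F_n^-$; since $\sharp F_n^-=\sharp F_n-\sharp\partial F_n\geq \sharp F_n-\sharp\partial F_n$, hypothesis (H) gives
\begin{align*}
\frac{1}{\lambda\left(P^{F_n}(x)\cap A_n\right)}&\geq e^{\psi^{F_n}(x)}= e^{\tilde\psi_E^{F_n}(x)-c\,\sharp F_n^-}\geq e^{\tilde\psi_E^{F_n}(x)-c\,\sharp F_n}.
\end{align*}
This is not quite (G), because of the extra factor $e^{-c\,\sharp F_n}$. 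To absorb it, I would instead work with $\tilde\psi_n:=\psi_n+n c'$ where $c'$ is chosen so that the Gibbs estimate still closes: concretely, pass to the cocycle $\hat\psi:=\psi$ itself but reindex. Actually the clean route is: apply Proposition~\ref{pour} to the subadditive process $\tilde\Psi=(\psi_n^+)_n$? No --- $\psi_n^+$ is subadditive but $E$ need not be $0$-large for it either. The genuinely clean route is to note that (H) already implies (G) for the subadditive process $\Psi':=\Psi$ \emph{after} replacing $\psi$ by $\psi-\inf_X\psi_*$-type normalization; but since $\psi^+(\mu)$ in Proposition~\ref{pour} already handles the sign via $\phi^+$, the correct statement is that (H) for $\psi$ is literally a special case of (G) once we check $0$-largeness is not actually used to derive the Gibbs inequality --- it is only used inside Lemma~\ref{cocycle}. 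So the honest plan is: define $\Psi'$ to be the additive cocycle of $\psi$ \emph{and} additionally assume nothing; run the proof of Proposition~\ref{pour} verbatim but replace each appeal to Lemma~\ref{cocycle} by a direct estimate valid for additive cocycles without $0$-largeness, namely that $\psi_E^{F_n}(x)=\psi^{F_n}(x)=\int \psi_N\,d\delta_x^{F_n}\cdot\frac{\sharp F_n}{\text{(something)}}$ up to boundary and $E_M$ error terms, which for additive cocycles is an exact Birkhoff-sum rearrangement rather than a subadditive inequality.

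Concretely, the key steps in order: (1) reduce to the case $(\mu_n^{F_n})_n\to\mu$; (2) pick $P$ with $\mu(\partial P)=0$ and diameter $<\epsilon$, so $\frac1m H_\mu(P^m)=\lim_n\frac1m H_{\mu_n^{F_n}}(P^m)$; (3) apply Lemma~\ref{comput} to get $\frac1m H_\mu(P^m)\geq\limsup_n\frac1{\sharp F_n}H_{\mu_n}(P^{F_n})$; (4) bound $H_{\mu_n}(P^{F_n})$ below by $\int(-\log\lambda(P^{F_n}(x)\cap A_n)+\log\lambda(A_n))\,d\mu_n\geq \int \psi^{F_n}\,d\mu_n$ using (H) and $\lambda(A_n)\leq 1$; (5) for the additive cocycle, rewrite $\psi^{F_n}(x)=\sum_{k\in F_n^-}\psi(T^kx)$ and compare with $\sharp F_n\int\frac{\psi_N}{N}\,d\delta_x^{F_n}=\frac1N\sum_{j=0}^{N-1}\sum_{k:\,k\in F_n,\,k\equiv j}\psi_N(T^kx)$ --- the difference is controlled by $\sup|\psi|$ times $(Nd_n(\partial F_n)+\text{error on }F_n\setminus E_M(x)+ O(M/N))\cdot\sharp F_n$, exactly as in Lemma~\ref{cocycle} but now the inequality goes both ways since there is no subadditivity loss, so in particular $\frac{\psi^{F_n}(x)}{\sharp F_n}\geq \int\frac{\psi_N}{N}\,d\delta_x^{F_n}-(\text{error})\sup_y|\psi(y)|$; (6) integrate in $\mu_n$, take $\limsup_n$ using $\underline d^{\mathfrak n}(\mathcal F)>0$ and the uniform density of $E$ in $\mathcal F$ from Lemma~\ref{measurable}, to get $\frac1m H_\mu(P^m)\geq \int\frac{\psi_N}{N}\,d\mu-\frac{1}{\underline d^{\mathfrak n}(\mathcal F)}\sup_y|\psi(y)|(\limsup_n\sup_{x\in A_n}d_n(F_n\setminus E_M(x))+4M/N)$; (7) let $N\to\infty$ so $\int\frac{\psi_N}{N}\,d\mu\to\psi(\mu)$ by the (sub)additive ergodic theorem applied to the additive cocycle (here $\psi(\mu)=\lim_n\frac1n\int\psi_n\,d\mu=\int\psi\,d\mu$), then $M\to\infty$, then $m\to\infty$, concluding $h(\mu)\geq h_\mu(P)\geq\psi(\mu)$.

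The main obstacle is step (5)/(7): in the subadditive Proposition~\ref{pour} one only gets $\psi^+(\mu)$ because the subadditive ergodic theorem and the $0$-largeness only give one-sided control; in the purely additive case the Birkhoff sum $\frac1N\psi_N=\frac1N\sum_{k<N}\psi\circ T^k$ converges in $L^1(\mu)$ to $\psi$ with no positivity restriction, so one recovers the sharper $\psi(\mu)$ rather than $\psi^+(\mu)$. The care needed is to verify that the rearrangement estimate of Lemma~\ref{cocycle}, which was proved using subadditivity and $0$-largeness only to get a lower bound on $\phi_E^{F_n}$, holds as a two-sided estimate $|\psi^{F_n}(x)-\sharp F_n\int\frac{\psi_N}{N}\,d\delta_x^{F_n}|\leq (\text{error})\sup_y|\psi(y)|$ for additive cocycles, which is immediate since for additive cocycles $\psi_E^{F_n}=\psi^{F_n}$ exactly and the only discrepancy between $\sum_{k\in F_n^-}\psi(T^kx)$ and $\frac1N\sum_{j,l}\psi_N(T^{j+lN}x)$ comes from integers near $\partial F_n$, near the ends of $E$-irreducible intervals of length $<M$ (the set $F_n\setminus E_M(x)$ together with boundary pieces), and the $\lceil n/N\rceil$ block-boundary terms, each counted with weight $\leq\sup_y|\psi(y)|$. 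Everything else is a verbatim transcription of the proof of Proposition~\ref{pour}.
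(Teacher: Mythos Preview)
Your final plan (steps (1)--(7)) is essentially correct, but it takes an unnecessary detour, and one justification is misstated. Two remarks:

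\medskip
\textbf{Comparison with the paper's proof.} The paper's argument is three lines and bypasses Lemma~\ref{cocycle} entirely. After the Gibbs step you reach $\limsup_n\frac{1}{\sharp F_n}\int\psi^{F_n}\,d\mu_n$. For an additive cocycle one has $\psi^{F_n}(x)=\sum_{k\in F_n^-}\psi(T^kx)$, while $\int\psi\,d\mu_n^{F_n}=\frac{1}{\sharp F_n}\sum_{k\in F_n}\int\psi\circ T^k\,d\mu_n$; the difference is bounded by $\frac{\sharp(F_n\setminus F_n^-)}{\sharp F_n}\sup_y|\psi(y)|\leq\frac{\sharp\partial F_n}{\sharp F_n}\sup_y|\psi(y)|\to 0$ by the F\"olner property alone. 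Then $\int\psi\,d\mu_n^{F_n}\to\int\psi\,d\mu=\psi(\mu)$ by weak-$*$ convergence and continuity of $\psi$. No $N$-block averaging, no $\psi_N/N$, no $E_M$ terms, no appeal to the ergodic theorem. Your route through an ``additive version of Lemma~\ref{cocycle}'' works, but the $E_M$ error terms you carry along are spurious in the additive setting (they are artifacts of the subadditive decomposition into $E$-irreducible pieces, which is irrelevant once $\psi_E^{F_n}=\psi^{F_n}$ exactly), and the limit $\int\psi_N/N\,d\mu\to\psi(\mu)$ is vacuous since that integral equals $\int\psi\,d\mu$ for every $N$ by invariance.

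\medskip
\textbf{A minor gap in step (4).} You write ``using (H) and $\lambda(A_n)\leq 1$'' to drop the $\log\lambda(A_n)$ term. That inequality goes the wrong way: $\lambda(A_n)\leq 1$ gives $\log\lambda(A_n)\leq 0$, so the lower bound $\int\psi^{F_n}\,d\mu_n+\log\lambda(A_n)$ cannot be replaced by $\int\psi^{F_n}\,d\mu_n$. What is actually needed (and what the proof of Proposition~\ref{pour} uses) is the \emph{lower} bound $\lambda(A_n)\geq e^{-n\delta_n}/n^2$ from Lemma~\ref{measurable}, which together with $\underline d^{\mathfrak n}(\mathcal F)>0$ yields $\frac{\log\lambda(A_n)}{\sharp F_n}\to 0$.

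\medskip
The initial paragraphs attempting to reduce to Proposition~\ref{pour} by shifting $\psi$ by a constant are, as you yourself recognize, dead ends; you may simply delete them.
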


\begin{proof}
Let  $P$ as in the proof of Proposition \ref{pour}. Then for $n\gg N\gg m$ we obtain by following this proof :
\begin{align*}
\frac{1}{m}H_{\mu}(P^m)&\geq   \limsup_{n\in\mathfrak n} \int \frac{\psi^{F_n}}{\sharp F_n}  \, d\mu_n(x), \textrm{ by Hypothesis (H)},\\
& \geq  \limsup_{n\in\mathfrak n}  \int \psi \, d \mu_n^{F_n},\\
 & \geq \psi(\mu).
 \end{align*}
 Letting $m$ go to infinity, we conclude that $h(\mu)\geq \psi(\mu)$.
\end{proof}

\section{Geometric times}

 Let $r\geq 2$ be an integer   and let $(M,\|\cdot\|)$ be  a $C^{r}$ smooth compact Riemannian  manifold, not necessarily a surface for the moment.  We denote by $\mathrm d$   the distance induced by the Riemannian structure on $M$. We also consider a distance  $\hat{\mathrm{d}}$ on the projective tangent bundle $ \mathbb P TM$, such that $\hat{\mathrm{d}}(\hat x, \hat y)\geq \mathrm{d}(\pi\hat x, \pi \hat y)$  for all $\hat x, \hat y \in \mathbb P TM$ with  $\pi:\mathbb P TM\rightarrow M $ being the natural projection. For a $C^r$ map $f:M\rightarrow M$ or a $C^r$ curve $\sigma:[0,1]\rightarrow M$ we may define the norm  $\|d^sf\|_\infty$ and $\|d^s\sigma\|_\infty$ for $1\leq s\leq r$ as the supremum norm of the $s$-derivative of the induced maps  through the charts of a given atlas or  through the exponential map $\exp$. In the following, to simplify the  presentation we lead the computations as $M$ was an Euclidean space.  For a $C^1$ curve $\sigma:I\rightarrow M$, $I$ being a compact interval of $\mathbb R$, we let $\sigma_*=\sigma(I)$. The length of $\sigma_*$ for the induced Riemannian metric is denoted by $|\sigma_*|$.  For a fixed curve $\sigma$ we also let   $v_x\in \mathbb PTM$ be  the line tangent to $\sigma_*$ at $x$ and we write $\hat x=(x,v_x)$.% and $\hat{\sigma_*}=\{\hat x, \ x\in \sigma_*\}$.
 
 We denote by $F$ the  projective action $F:\mathbb PTM\circlearrowleft$ induced by $f$ and we consider:
the additive derivative cocycle $\Phi=(\phi_k)_k$ for $F$ on $\mathbb PTM$ given by $\phi(x,v)=\phi_1(x,v)=\log \|d_xf(v)\|$, where  we have identified the line $v_x$ with one of its unit generating vectors.

%\subsubsection{Non autonomous system}   We let $\exp$ and $R=R_{inj}$ be respectively the exponential map and  the radius of injectivity of the Riemanian manifold  $(M, \|\cdot\|)$. For $x\in M$ and $\epsilon>0$ we let $\mathcal G^x=\mathcal G^x(f)=(\mathtt g_{n}^x)_n$ be the non autonomous systems defined for all $n$ by $$\mathtt g_{n}^x=\epsilon^{-1}\exp_{g^{n}x}^{-1}\circ g\circ \exp_{g^{n-1}x}(\epsilon\cdot):B(0,R)\subset T_xM\sim\mathbb R^2\rightarrow T_{f^px}M\sim\mathbb R^2,$$where  $\epsilon>0$ is chosen so small that  $\sup_{n, x}\|\mathtt g_{n}^x \|_r=\|dg\|$.  We let also $\mathtt g^{n}_x=\mathtt g_{n}^x \circ \cdots \circ \mathtt g_{1}^x$ for $n>0$ and $\mathtt g^{0,x}=\textrm{Id}_{B(0,R)}$. For a $C^r$ map  $w:U\rightarrow \mathbb R^2$ with $U$ an open subset of $\mathbb R^2$   we write   $\|w\|_r=\max_{1\leq s\leq r}\|d^sw\|$. 
 
\subsection{Bounded curve} \label{curvee}
Following \cite{bure} a $C^r$  smooth curve  $\gamma:[-1,1]\rightarrow  M$   is said \textbf{bounded }when   \begin{equation*}
\max_{s=2,\cdots, r}\|d^s\gamma\|_\infty\leq \frac{1}{6}\|d\gamma\|_\infty. \end{equation*}
We first recall some basic properties of bounded curves (see  Lemma 7 in \cite{bure}).
A bounded curve has bounded distorsion meaning that 
\begin{equation}\label{dist}\forall t,s\in [-1,1], \ \frac{\|d\gamma(t)\|}{\|d\gamma(s)\|}\leq 3/2. \end{equation} 
Indeed, if $t_*\in [-1,1]$ satisfies $\|d\gamma(t_*)\|=\|d\gamma\|_\infty$ then we have for all $s\in [-1,1]$,
\begin{align*}
\|d\gamma(t_*)-d\gamma(s)\|&\leq 2 \|d^2\gamma\|_{\infty},\\
&\leq \frac{1}{3}\|d\gamma(t_*)\|, \\
\text{ therefore \ }\frac{2}{3}\|d\gamma(t_*)\|&\leq \|d\gamma(s)\| \leq \|d\gamma(t_*)\|, 
\end{align*}
The projective component of $\gamma$ oscillates also slowly. If we identify  $M$ with $\mathbb R^2$ \footnote{This will be always possible as we will only consider  curves with diameter less than the radius of injectivity.}, we have 
\begin{align}\label{oscill}\|d\gamma(t_*)\|\cdot \sin \angle d\gamma(t_*), d\gamma(s)&\leq \|d\gamma(t_*)-d\gamma(s)\|\leq  \frac{1}{3}\|d\gamma(t_*)\|,\nonumber \\
 \angle d\gamma(t_*), d\gamma(s)&\leq \pi/6.
\end{align}

When moreover $\|d\gamma\|_\infty\leq \epsilon$ we say that $\gamma$ is \textbf{strongly $\epsilon$-bounded}. In particular such a map satisfies  $\|\gamma\|_r:=\max_{1\leq s\leq r}\|d^s\gamma\|\leq \epsilon$, which is the standard  $C^r$ upper bound required for the reparametrizations in the usual Yomdin's theory. But this last condition does not  allow to control the distorsion along the curve in general.\\ % The angle between two lines $e$ and $f$ will be denoted by $\angle e,f$.\\

If $\gamma$ is bounded then so is $\gamma_a=\gamma(a\cdot ):[-1,1]\rightarrow M$ for any $a\leq\frac{2}{3}$:
\begin{align*}
\forall s\geq 2, \ \|d^s\gamma_a\|_\infty&\leq \frac{1}{6} a^s\|d\gamma\|_\infty, \\
&\leq  \frac{1}{6} a^s\frac{3}{2}\|d\gamma(0)\|,\\
&\leq \frac{1}{6}a^{s-1}\|d\gamma(0)\|,\\
&\leq \frac{1}{6}\|d\gamma_a\|_\infty.
\end{align*}
As $\|d\gamma_a\|_\infty\leq a\|d\gamma\|_\infty$, if $\gamma$ is moreover strongly $\epsilon$-bounded, then  $\gamma_a$ is $a\epsilon$-strongly bounded. 

\begin{lem}\label{tech}Let $\gamma:[-1,1]\rightarrow M$ be a $C^r$ bounded curve with $\|d\gamma\|_\infty\geq \epsilon$. Then there is a family of affine maps $\iota_j:[-1,1]\circlearrowleft$, $j\in L:=\underline{L}\cup \overline{L}$ such that:
\begin{itemize}
\item each $\gamma \circ \iota_j$ is $\epsilon$-bounded and $\|d(\gamma\circ \iota_j)(0)\|\geq \frac{\epsilon}{6}$,
\item $[-1,1]$ is the union of $\bigcup_{j\in \underline{L}}\iota_j([-1,1])$ and $\bigcup_{j\in \overline{L}}\iota_j([-\frac{1}{3},\frac{1}{3}])$, 
\item $\sharp \underline{L}\leq 2$ and $\sharp \overline{L}\leq 6\left( \frac{\|d\gamma\|_\infty}{\epsilon}+1\right)$, 
\item for any  $x\in \gamma_*$, we have 
$\sharp \{j\in L, \ (\gamma \circ \iota_j)_*\cap B(x,\epsilon)\neq \emptyset\}\leq 100.$
\end{itemize}
\end{lem}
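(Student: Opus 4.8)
The plan is to cut $[-1,1]$ into subintervals on which the rescaled curve is $\epsilon$-bounded, using the fact that boundedness is inherited under affine contractions of ratio $\le 2/3$ (shown just before the statement). First I would observe that it suffices to produce affine maps $\iota_j$ with images either $\iota_j([-1,1])$ or $\iota_j([-1/3,1/3])$ whose union covers $[-1,1]$, such that each $\gamma\circ\iota_j$ is $\epsilon$-bounded with $\|d(\gamma\circ\iota_j)(0)\|\ge \epsilon/6$; the multiplicity bound is then a separate combinatorial check. The natural choice is to take affine maps of the form $\iota(t)=c+\rho t$ with a fixed slope $\rho$ chosen so that $\rho\|d\gamma\|_\infty\le \epsilon$, say $\rho = \epsilon/\|d\gamma\|_\infty$ (note $\rho\le 1$ since $\|d\gamma\|_\infty\ge\epsilon$; if $\|d\gamma\|_\infty\le$ a fixed multiple of $\epsilon$ one just takes $L=\{\mathrm{id}\}$). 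Since $\gamma$ is bounded, for $s\ge 2$ we have $\|d^s(\gamma\circ\iota)\|_\infty = \rho^s\|d^s\gamma\|_\infty\le \rho^s\tfrac16\|d\gamma\|_\infty\le \tfrac16\rho\|d\gamma\|_\infty$, while by the distortion estimate \eqref{dist}, $\|d(\gamma\circ\iota)\|_\infty=\rho\sup\|d\gamma\|\ge \rho\cdot\tfrac23\|d\gamma\|_\infty = \tfrac23\epsilon$; hence $\max_{s\ge2}\|d^s(\gamma\circ\iota)\|_\infty\le \tfrac16\rho\|d\gamma\|_\infty \le \tfrac16\|d(\gamma\circ\iota)\|_\infty\cdot\tfrac{3}{2}\cdot\tfrac{\rho\|d\gamma\|_\infty}{\rho\|d\gamma\|_\infty}$ — more carefully, $\tfrac16\rho\|d\gamma\|_\infty = \tfrac16\epsilon \le \tfrac14\|d(\gamma\circ\iota)\|_\infty\le \tfrac16\|d(\gamma\circ\iota)\|_\infty\cdot\tfrac32$, which after a small adjustment of constants gives $\epsilon$-boundedness. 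Similarly $\|d(\gamma\circ\iota)(0)\|\ge \tfrac23\rho\|d\gamma\|_\infty\ge \tfrac23\epsilon\ge\tfrac\epsilon6$.

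Next I would arrange the covering. Place a first interval centered near an endpoint and tile $[-1,1]$ by intervals of length $2\rho$ (the $\overline L$ family, using only the middle third $[-1/3,1/3]$ scaled, i.e. effective step $2\rho/3$), which requires at most $\lceil 2/(2\rho/3)\rceil = \lceil 3/\rho\rceil \le 3/\rho+1 = 3\|d\gamma\|_\infty/\epsilon+1$ intervals; to be safe with the factor $6$ in the statement I would allow a doubling for overlap and for handling the two endpoints (the $\underline L$ family, $\sharp\underline L\le 2$, taking full images $\iota_j([-1,1])$ near $\pm1$). This yields $\sharp\overline L\le 6(\|d\gamma\|_\infty/\epsilon+1)$. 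Each $\iota_j$ must map $[-1,1]$ into itself; near the endpoints one shrinks or translates appropriately, which is where the two exceptional maps of $\underline L$ and the use of $[-1/3,1/3]$ rather than $[-1,1]$ come in — this bookkeeping at the boundary is the only genuinely fiddly point.

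Finally, for the multiplicity bound: all the curves $(\gamma\circ\iota_j)_*$ are pieces of $\gamma_*$ of length at most $\rho\|d\gamma\|_\infty = \epsilon$ (using \eqref{dist} again), and their parameter intervals overlap with bounded multiplicity by construction (step $\sim\rho/3$, length $\sim 2\rho$, so each point of $[-1,1]$ lies in a bounded number, $\le 7$ or so, of them). A given ball $B(x,\epsilon)$ can meet $(\gamma\circ\iota_j)_*$ only if the parameter interval of $\iota_j$ lies within arclength $2\epsilon$ of $\gamma^{-1}(x)$, i.e. within a parameter window of size $\lesssim 3\epsilon/\|d\gamma\|_\infty = 3\rho$ on each side (by the lower distortion bound $\|d\gamma\|\ge\tfrac23\|d\gamma\|_\infty$), hence in a window of $\sim 6\rho$ total, which contains $O(1)$ of the $\iota_j$; chasing the constants gives the bound $100$ with room to spare. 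I expect the main obstacle to be purely organizational — choosing the centers of the $\iota_j$ and the two endpoint maps so that all four bulleted properties hold simultaneously with the stated constants — rather than any analytic difficulty, since every analytic estimate reduces to the boundedness inequality and the distortion bound \eqref{dist} already recalled.
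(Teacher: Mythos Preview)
Your approach matches the paper's sketch: tile $[-1,1]$ by affine maps of a fixed contraction rate. The paper takes $\rho=\frac{2\epsilon}{3\|d\gamma\|_\infty}$, which ensures $\rho\le \frac{2}{3}$ so that the observation recorded immediately before the lemma applies directly and each $\gamma\circ\iota_j$ is bounded; your choice $\rho=\frac{\epsilon}{\|d\gamma\|_\infty}$ can give $\rho$ close to $1$, which is exactly why your constant comes out as $\frac14$ rather than $\frac16$. The ``small adjustment of constants'' you flag is precisely the replacement of $\rho$ by $\frac{2}{3}\rho$.

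There is however one genuine gap in your multiplicity argument. You assert that $(\gamma\circ\iota_j)_*$ can meet $B(x,\epsilon)$ only when $\iota_j([-1,1])$ lies in a parameter window of size $O(\rho)$ around $\gamma^{-1}(x)$, equivalently that every point of $\gamma_*\cap B(x,\epsilon)$ is within arclength $O(\epsilon)$ of $x$ along $\gamma_*$. The distortion bound \eqref{dist} alone does not give this: nothing in \eqref{dist} prevents the curve from leaving $B(x,\epsilon)$ and returning to it repeatedly, and a priori the number of such returns could be of order $\|d\gamma\|_\infty/\epsilon$, i.e.\ as large as $\sharp L$ itself, which would destroy the bound. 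The missing input is the angle estimate \eqref{oscill}: since all tangent directions of a bounded curve lie within $\pi/6$ of a fixed direction, $\gamma$ is a graph over that direction, hence $\gamma_*\cap B(x,\epsilon)$ is a single arc of length $O(\epsilon)$. This is precisely what the paper invokes (``the bounded map $\gamma$ stays in a cone of opening angle $\pi/6$, its intersection with $B(x,\epsilon)$ has length less than $2\epsilon$''). Once you cite \eqref{oscill}, your parameter-window count goes through with room to spare under the constant $100$.
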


\begin{proof}[Sketch of proof]
For the first three items it is enough to consider affine reparametrizations of $[-1,1]$ with rate $\frac{2\epsilon}{3\|d\gamma\|_\infty}$.  As the bounded map $\gamma$ stays in a cone of opening angle $\pi/6$, its intersection with $B(x,\epsilon)$ has length less than $2\epsilon$. The last item follows then easily.
\end{proof}
Fix a $C^r$ smooth diffeomorphism $f:M\circlearrowleft$. 
A curve $\gamma:[-1,1]\rightarrow M$   is  said  \textbf{$n$-bounded} (resp. \textbf{strongly $(n,\epsilon)$-bounded}) when $f^k\circ \gamma$ is  bounded (resp. strongly $\epsilon$-bounded) for $k=0,\cdots, n$.
A strongly $\epsilon$-bounded curve $\gamma$ is contained in the dynamical ball $B_n(x,\epsilon):=\{y\in M, \ \forall k=0,\cdots, n-1, \  \mathrm{d}(f^kx,y )<\epsilon \}$ with $x=\gamma(0)$.

Fix  a   $C^r$ curve $\sigma:I\rightarrow M$.   For $x\in \sigma_*$, a positive  integer $n$ is called an  \textbf{$(\alpha, \epsilon)$-geometric time} of $x$ when there exists an affine map 
  $\theta_n:[-1,1]\rightarrow I$  such that $\gamma_n:=\sigma\circ \theta_n$   is strongly $(n,\epsilon)$-bounded, $\gamma_n(0)=x$ and $\|d(f^n\circ \gamma_n)(0)\|\geq \frac{3}{2}\alpha\epsilon$. 
The concept of  \textbf{$(\alpha, \epsilon)$-geometric time} is  almost independent of $\epsilon$. 
  Indeed it follows from the above observations that, if $n$ is a $(\alpha, \epsilon)$-geometric time of $x$, then it is also a $(\alpha, \epsilon')$-geometric time for $\epsilon'<\frac{2\epsilon}{3}$. Moreover if $n$ is a $(\alpha, \epsilon)$-geometric time of $x$ with $\gamma_n$ the  associated curve, then $n$ is a $(\frac{2}{3}\alpha,\frac{2}{3}\epsilon)$-geometric  time of  any $y\in \gamma_n([1/3,1/3])$: if $y=\gamma_n(t)$ for $t\in[-1/3,1/3]$ then $\tilde \gamma_n:=\gamma_n(t+\frac{2}{3}\cdot)$ is strongly  $(n,\frac{2}{3}\epsilon)$-bounded and satisfies  $\tilde \gamma_n(0)=y$ and $\|d(f^n\circ \tilde\gamma_n)(0)\|=\frac{2}{3}\|d(f^n\circ\gamma_n)(t)\|\geq \frac{4}{9}\|d(f^n\circ \gamma_n)(0)\|\geq \frac{2}{3}\alpha \epsilon$.\\

   We let $D_n(x)$ and $H_n(x)$ be  the images of $f^n\circ \gamma_n$ and $\gamma_n$ respectively with $\gamma_n$ as above of  maximal length. We define the semi-length of $D_n(x)$ as the minimum of the lengths of $f^n\circ \gamma_n([0,1])$ and $f^n\circ \gamma_n([-1,0])$.  The semi-length of $D_n(x)$ is larger than $\alpha\epsilon$ at a $(\alpha, \epsilon)$-geometric time $n$. One can also easily checks that  the curvature of $f^n\circ \sigma$ at $f^nx$ is bounded from above by $\frac{1}{\alpha\epsilon}$. From the bounded distorsion property of bounded curves (\ref{dist}) we get 
\begin{equation}\label{distor}\forall y,z\in H_n(x) \, \forall 0\leq l<n,\ \ \    \frac{e^{\phi_{n-l}(F^l\hat y)}}{e^{\phi_{n-l}(F^l\hat z)}}\leq \frac{9}{4}.\end{equation}

\subsection{Reparametrization Lemma }
We consider a $C^r$ smooth  diffeomorphism $g:M\circlearrowleft$ and a $C^r$ smooth curve $\sigma:I\rightarrow M$ with $\mathbb N \ni r\geq 2$. 
We state a global reparametrization lemma to describe the dynamics on $\sigma_*$. We will apply this lemma to $g=f^p$ for large $p$ with $f$ being the $C^r$ smooth system under study. We denote by $G$ the map induced by $g$ on $\mathbb P TM$.\\

We will encode the dynamics of $g$ on $\sigma_*$ with a tree, in a similar  way the symbolic dynamic associated to monotone branches encodes the dynamic of a continuous piecewise monotone interval map.   A  weighted directed rooted tree $\mathcal T$ is a  directed rooted tree whose  edges are labelled. Here the weights on the  edges are pairs of  integers. Moreover the nodes of our tree will be coloured,  either  in blue or in red. \\

 We let $\mathcal T_n$ (resp. $\underline{\mathcal T_n}$, $\overline{\mathcal T_n}$) be the set of nodes  (resp. blue, red nodes) of level $n$. For all $k\leq n-1$  and for all $\mathbf i^n\in \mathcal T_n$, we also let  $\mathbf i^n_{k}$ be the node of level $k$ leading to $\mathbf i^{n}$.  For   $\mathbf i^n\in \mathcal T_n$, we let $k(\mathbf i^n)=(k_1(\mathbf i^n),k'_1(\mathbf i^n), k_2(\mathbf i^n) \cdots,k'_n(\mathbf i^n) )$ be the $2n$-uple of  integers given  by the sequence of labels along the path from the root $\mathbf i^0$ to $\mathbf i^n$,  where  $\left(k_l(\mathbf i^n),k'_l(\mathbf i^n)\right)$ denotes the label of the edge joining $\mathbf i^n_{l-1}$ and $\mathbf i^n_{l}$.\\

For $x\in \sigma_*$, we let $k(x)\geq k'(\hat x)$ be the following integers:
$$k(x):=\left[\log\|d_{x}g \|\right], $$
$$k'( \hat x):=\left[\log \|d_xg(v_x)\|\right].$$
Then for all $n\in \mathbb N^*$ we  define 
$$k^n( x)=(k( x),k'(\hat x), k(gx),\cdots k'(G^{n-2}\hat x), k(g^{n-1}x), k'(G^{n-1}\hat x) ).$$
For a $2n$-uple of   integers $\mathbf k^n=(k_1,k'_1, \cdots k'_n,k_n)$ we consider then
$$\mathcal H(\mathbf k^n):=\left\{x\in \sigma_*, \,  k^n( x)=\mathbf k^n\right\}.$$

We restate the Reparametrization Lemma  (RL for short) proved in \cite{bure} in a \textit{global} version. Let $\exp_x$ be the exponential map at $x$ and let $R_{inj}$ be the radius of injectivity of $(M, \|\cdot\|)$.
\begin{Rep}
 Let $\frac{R_{inj}}{2}>\epsilon>0$
 % such that the image of any $\epsilon$-ball has diameter less than $R_{inj}$. We also assume
 satisfying  $\|d^sg_{2\epsilon}^x\|_\infty\leq 3\epsilon \|d_xg\|$ for all $s=1,\cdots,r$ and all $x\in M$, where $g^x_{2\epsilon}= g\circ  \exp_x(2\epsilon\cdot): \{w_x\in T_xM, \ \|w_x\|\leq 1\}\rightarrow M$  and let  $\sigma:[-1,1]\rightarrow M$ be a strongly $\epsilon$-bounded  curve. 
 
 Then there is $\mathcal T$, a bicoloured weighted directed rooted tree,  and $\left(\theta_{\mathbf{i}^n}\right)_{\mathbf{i}^n \in \mathcal T_n}$, $n\in \mathbb N$, families of affine reparametrizations of $[-1,1]$,  such that for some universal constant $C_r$ depending only on $r$:

\begin{enumerate}
\item $\forall \mathbf i^n\in \mathcal T_n$, the curve $\sigma\circ \theta_{\mathbf i^n}$ is $(n,\epsilon)$-bounded, 
\item $\forall \mathbf i^n\in \mathcal T_n$, the affine map  $\theta_{\mathbf{i}^n}$  may be written as  $\theta_{\mathbf{i}^n_{n-1}}\circ \phi_{\mathbf i^n}$ with $\phi_{\mathbf i^n}$ being an affine contraction with rate smaller than $1/100$ and $\theta_{\mathbf{i}^n}([-1,1])\subset \theta_{\mathbf{i}^n_{n-1}}([-1/3,1/3])$ when $\mathbf{i}^n_{n-1}$ belongs to $ \overline{\mathcal T_{n-1}}$,
%\item $\bigcup_{\mathbf{i}^n\in \mathcal T_n}\theta_{\mathbf{i}^n}([-1,1])=[-1,1]$,
\item $\forall \mathbf i^n\in \overline{\mathcal T_n}$, we have $\left\|d\left(f^n\circ \sigma \circ \theta_{\mathbf{i}^n}\right)(0)\right\|\geq \epsilon/6$, 
\item  
 $\forall \mathbf k^n\in (\mathbb Z\times \mathbb Z)^n$, the  set $\sigma^{-1}\mathcal H(\mathbf k^n) $ is contained in the union of \\ $$\displaystyle{\bigcup_{\stackrel{\mathbf i^n\in \overline{\mathcal T_n}}{ k(\mathbf i^n)=\mathbf k^n}} \theta_{\mathbf{i}^n}([-1/3,1/3])} \text{ and } \displaystyle{\bigcup_{\stackrel{\mathbf i^n\in \underline{\mathcal T_n}}{ k(\mathbf i^n)=\mathbf k^n}} \theta_{\mathbf{i}^n}([-1,1])}.$$\\
  Moreover any term of these unions have a non-empty intersection with $\sigma^{-1}\mathcal H(\mathbf k^n) $,
\item $\forall  \mathbf i^{n-1}\in \mathcal T_{n-1}$ and $(k_n,k'_n)\in \mathbb Z\times \mathbb Z$ we have
$$\sharp \left\{ \mathbf i^n\in \overline{\mathcal T_n}, \ \mathbf i^n_{n-1}= \mathbf i^{n-1} \text{ and }   (k_n(\mathbf i^n),k'_n(\mathbf i^n)) =(k_n,k'_n)\right\}\leq C_re^{\max\left(k'_n,\frac{k_n-k'_n}{r-1}\right)}, $$
$$\sharp \left\{ \mathbf i^n\in \underline{\mathcal T_n}, \  \mathbf i^n_{n-1}= \mathbf i^{n-1} \text{ and }  (k_n(\mathbf i^n),k'_n(\mathbf i^n)) =(k_n,k'_n)\right\}\leq C_r e^{\frac{k_n-k'_n}{r-1}}.$$
\end{enumerate}
\end{Rep}

\begin{proof}
We argue  by induction on $n$. For $n=0$ we let $\mathcal T_0=\underline{\mathcal T_0}=\{\mathbf i^0\}$ and we just take $\theta_{\mathbf i^0}$ equal to  the identity map on $[-1,1]$. Assume the tree and the associated reparametrizations have  been built till the level $n$.

Fix $\mathbf i^n\in\mathcal T_n$ and let \begin{align*}\hat \theta_{\mathbf i^n}:=&\left\{
    \begin{array}{ll}
       \theta_{\mathbf i^n}(\frac{1}{3}\cdot ) & \mbox{if }  \mathbf i^n\in \overline{\mathcal  T_n},\\
     \theta_{\mathbf i^n} & \mbox{if }  \mathbf i^n\in \underline{\mathcal  T_n}.
    \end{array}\right.
    \end{align*}
 We will define the children $
\mathbf i^{n+1}$ of $\mathbf i^n$, i.e. the nodes $\mathbf i^{n
+1}\in \mathcal T_{n+1}$ with $\mathbf i_n^{n+1}=\mathbf i^{n}
$. The label on the edge joining $\mathbf i^n$ to $\mathbf i^{n+1}
$ is a pair $(k_{n+1}, k'_{n+1})$ such that the $2(n+1)$-uple $
\mathbf{k}^{n+1}=(k_1(\mathbf i^n), \cdots, k'_n(\mathbf 
i^n),k_{n+1},k'_{n+1})$ satisfies $\mathcal H(\mathbf k^{n+1})\cap \left(\sigma \circ \hat\theta_{\mathbf i^n}\right)_*\neq \emptyset$. We fix such a pair $(k_{n+1}, k'_{n+1})$ and the associated sequence $\mathbf{k}^{n+1}$. We let $\eta,\psi:[-1,1]\rightarrow M$ be the curves defined as: \begin{align*}
\eta:=&\sigma\circ \hat \theta_{\mathbf i^n},\\
 \psi:=&g^n\circ \eta.\\ \end{align*}

% and let $\mathbf k^{n+1}$ with $\mathcal H(\mathbf k^{n+1})\cap \left(\sigma \circ \theta_{\mathbf i^n}\right)_*\neq \emptyset$. 

\underline{\textit{First step :}} \textbf{Taylor polynomial approximation.} One   computes for an affine map $\theta:[-1,1]\circlearrowleft$ with contraction rate $b$ precised later and   with $y= \psi(t)\in g^{n}\mathcal H(\mathbf k^{n+1}) $, $t\in \theta([-1,1])$:

\begin{align*}\|d^r(g\circ \psi\circ \theta)\|_\infty &\leq b^r \left\|d^{r}\left(g_{2\epsilon}^y \circ \psi_{2\epsilon}^y\right)\right\|_\infty , \textrm{with $\psi_{2\epsilon}^y:=(2\epsilon)^{-1}\exp_y^{-1}\circ \psi$,}\\
&\leq  b^r\left\|d^{r-1}\left( d_{\psi_{2\epsilon}^y}g_{2\epsilon}^y\circ d\psi_{2\epsilon}^y \right) \right\|_\infty,\\
&\leq b^r 2^r  \max_{s=0,\cdots,r-1}\left\|d^s\left(d_{\psi_{2\epsilon}^y}g_{2\epsilon}^y\right)\right\|_{\infty}\|\psi_{2\epsilon}^y \|_r.
\end{align*}
By assumption on $\epsilon$, we have $\|d^s g_{2\epsilon}^y\|_{\infty}\leq 3\epsilon\|d_y g\|$ for any $r\geq s\geq 1$.
Moreover  $\|\psi_{2\epsilon}^y \|_r\leq (2\epsilon)^{-1}\|d\psi\|_\infty\leq 1$ as $\psi$ is strongly $\epsilon$-bounded. 
Therefore by  Fa\'a di Bruno's formula, we get  for some\footnote{Although these constants may differ at each step, they are all denoted by $C_r$.}  constants $C_r>0$ depending only on $r$:  
\begin{align*}\max_{s=0,\cdots,r-1}\|d^s\left(d_{\psi_{2\epsilon}^y}g_{2\epsilon}^y\right)\|_{\infty} &\leq \epsilon C_r\|d_y g\|,\\
\text{then }&,\\
\|d^r(g\circ \psi\circ \theta)\|_\infty &\leq \epsilon C_rb^r \|d_y g\|\|\psi_{2\epsilon}^y \|_r,\\
&\leq  C_rb^r \|d_y g\|\|d\psi \|_\infty, \\
&\leq ( C_r b^{r-1}\|d_yg\|) \|d(\psi \circ \theta)\|_{\infty}, \\
&\leq (C_r b^{r-1}e^{k_{n+1}}) \|d(\psi \circ \theta)\|_{\infty},  \textrm{ because $y$  belongs to $g^n\mathcal H(\mathbf k^{n+1})$}, \\
& \leq e^{k'_{n+1}-4}\|d(\psi\circ \theta)\|_\infty, \textrm{ by taking  $b=\left(C_re^{k_{n+1}-k'_{n+1}+4 }\right)^{-\frac{1}{r-1}}$.}
\end{align*}

The Taylor polynomial  $P$ at $0$ of degree $r-1$  of $d(g\circ \psi\circ \theta)$  satisfies on $[-1,1]$:
\begin{align*}
\|P-d(g\circ \psi\circ \theta)\|_{\infty}&\leq e^{k'_{n+1}-4}\|d(\psi\circ \theta)\|_\infty.
\end{align*}
We may cover $[-1,1]$ by at most $b^{-1}+1$ such affine maps $\theta$. \\

\underline{\textit{Second step :}} \textbf{Bezout theorem.}
Let $a_n:=e^{k'_{n+1}}\|d(\psi\circ \theta)\|_\infty$. Note that for $s\in [-1,1]$ with $\eta\circ \theta(s)\in \mathcal H(\mathbf k^{n+1})$
we have $\|d(g\circ \psi\circ \theta)(s)\|\in [a_ne^{-2},a_ne^{2}]$, therefore $\|P(s)\|\in [a_ne^{-3},a_ne^3]$. Moreover if we have now  $\|P(s)\|\in [a_ne^{-3},a_ne^3]$ for some $s\in [-1,1]$ we  get also $\|d(g\circ \psi\circ \theta)(s)\|\in [a_ne^{-4},a_ne^{4}]$.

 By Bezout theorem the semi-algebraic set $\{ s\in [-1,1],\  \|P(s)\|\in  [e^{-3}a_n, e^{3}a_n]\}$ is the disjoint  union of closed   intervals $(J_i)_{i\in I}$ 
with $\sharp I$ depending only on $r$. Let $\theta_i$ be the composition of $\theta$ with an affine  reparametrization from $[-1,1]$ onto $J_i$. \\

\underline{\textit{Third step :}} \textbf{ Landau-Kolmogorov inequality.}
By the Landau-Kolmogorov inequality on the interval  (see Lemma 6 in  \cite{bur}), we have for some   constants $C_r\in \mathbb N^*$  and for all $1\leq s\leq r$:
\begin{align*}
\|d^s(g\circ \psi\circ \theta_i)\|_\infty & \leq  C_r\left(\|d^r(g\circ \psi\circ \theta_i)\|_\infty +\|d(g\circ \psi\circ \theta_i)\|_\infty\right),\\
&\leq C_r\frac{|J_i|}{2}\left( \|d^r(g\circ \psi\circ \theta)\|_\infty+ \sup_{t\in J_i}\|d(g\circ \psi\circ \theta)(t)\| \right),\\
&\leq C_r a_n\frac{|J_i|}{2}.
\end{align*}
We cut again each $J_i$ into $1000C_r$ intervals $\tilde{J_i}$ of the same length with $(\eta \circ \theta)(\tilde{J}_i)\cap \mathcal H(\mathbf k^{n+1})\neq \emptyset$. Let $\tilde{\theta_i}$ be the affine reparametrization   from $[-1,1]$ onto  $\theta(\tilde{J_i})$. We check that $g\circ \psi\circ \tilde{\theta_i}$ is bounded:
\begin{align*}
\forall s=2,\cdots, r, \   \|d^s(g\circ \psi\circ \tilde{\theta_i})\|_\infty & \leq (1000C_r)^{-2} \|d^s(g\circ \psi\circ \theta_i)\|_\infty,\\
&\leq \frac{1}{6}(1000C_r)^{-1}\frac{|J_i|}{2}a_ne^{-4},\\
&\leq  \frac{1}{6}(1000C_r)^{-1}\frac{|J_i|}{2}\min_{s\in J_i}\|d(g\circ \psi\circ \theta)(s)\|,\\
&\leq  \frac{1}{6}(1000C_r)^{-1}\frac{|J_i|}{2}\min_{s\in \tilde{J}_i}\|d(g\circ \psi\circ \theta)(s)\|,\\
&\leq \frac{1}{6} \|d(g\circ \psi\circ \tilde{\theta_i})\|_\infty.
\end{align*}

\underline{\textit{ Last step :}} \textbf{$\epsilon$-bounded curve.}
Either $g\circ \psi\circ \tilde{\theta_i}$ is $\epsilon$-bounded and $\hat \theta_{\mathbf i^n}\circ \tilde{\theta_i}=\theta_{\mathbf i^{n+1}}$ for some $\mathbf i^{n+1}\in \underline{\mathcal T}_{n+1}$. Or we apply Lemma \ref{tech} to $g\circ \psi\circ \tilde{\theta_i}$ : the new affine parametrizations $\hat \theta_{\mathbf i^n}\circ \tilde{\theta_i}\circ \iota_j$, $j\in \underline L$ (resp. $j\in \overline{L}$) then  define  $\theta_{\mathbf i^{n+1}}$ for a  node $\mathbf i^{n+1}$ in $\underline{\mathcal T}_{n+1}$ (resp. $\overline{\mathcal T}_{n+1}$). Note finally that:
\begin{align*}
\sharp  \overline{L} &\leq 6\left( \frac{\|d(g\circ \psi\circ \tilde{\theta_i})\|_\infty}{\epsilon}+1\right),\\
& \leq 100 \max(e^{k'_{n+1}}b,1), \text{ as $\psi$ is $\epsilon$-bounded and $\|d\tilde{\theta_i}\|_\infty\leq b$},\\
&\leq C_r \max\left(\frac{e^{k'_{n+1}}}{e^{\frac{k_{n+1}-k'_{n+1}}{r-1}}},1\right),
\end{align*}
therefore  
\begin{align*}
\sharp \left\{ \mathbf i^{n+1}\in \overline{\mathcal T_{n+1}} \ \Big| \ \overset{\mathbf i^{n+1}_{n}= \mathbf i^{n} \text{ and }}{ (k_{n+1}(\mathbf i^{n+1}),k'_{n+1}(\mathbf i^{n+1})) =(k_{n+1},k'_{n+1})}\right\}&\leq \sum_{\tilde\theta_i}C_r \max\left(\frac{e^{k'_{n+1}}}{e^{\frac{k_{n+1}-k'_{n+1}}{r-1}}},1\right), \\
&\leq C_re^{\max\left(k'_{n+1},\frac{k_{n+1}-k'_{n+1}}{r-1}\right)}. 
\end{align*}
\end{proof}

As a corollary  of the proof of  RL  we state a \textit{local} reparametrization lemma, i.e. we only reparametrize the intersection of $\sigma_*$ with some given dynamical ball.  For $x\in \sigma_*$, $n\in \mathbb N$ and $\epsilon>0$ we let 
$$B^G_\sigma(x,\epsilon,n):=\left\{y\in \sigma_*, \ \forall k=0,\cdots, n-1, \ \hat{\mathrm{d}}(G^k\hat x,G^k\hat y)<\epsilon  \right\}.$$
For all $(x,v)\in \mathbb PTM$, we also let  $w(x,v)=w_g(x,v) :=\log\|d_{x}g\|-\log \|d_xg(v)\|$ and for all $n\in \mathbb N$ we let $w^n(x,v)=w^n_g(x,v):=\sum_{k=0}^{n-1}w(G^k(x,v))$. We consider $\epsilon>0$  as in the Reparametrization Lemma. We assume moreover that 
$$ [\hat{\mathrm{d}}((x,v), (y,w) )<\epsilon]\Rightarrow [ \left|\log \|d_xg(v)\|-\log \|d_yg(w)\|\right|<1 \text{ and }\left|\log \|d_xg\|-\log \|d_yg\|\right|<1].$$

\begin{coro}\label{local}
For any  strongly $\epsilon$-bounded curve $\sigma:[-1,1]\rightarrow M$ and for any $x\in \sigma_*$,  we have for some constant  $C_r$ depending only on $r$: 
\begin{equation}\label{grgr}  \forall n\in \mathbb N, \  \ \sharp \left\{\mathbf i^n\in \mathcal T_n,  \ (\sigma\circ \theta_{\mathbf i^n})_*\cap  B^G_{\sigma}(x,\epsilon, n)\neq \emptyset\right\}\leq C_r^n e^{\frac{w^n(\hat x)}{r-1}}.\end{equation}
\end{coro}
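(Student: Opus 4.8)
The plan is to run an induction on the level $n$ that mirrors the construction of the tree $\mathcal T$ in the proof of the Reparametrization Lemma, where now $\mathcal T$ and the maps $(\theta_{\mathbf i^n})$ are those attached to $g$ and the strongly $\epsilon$-bounded curve $\sigma$. Write $N_n$ for the quantity to be estimated, i.e.\ the number of $\mathbf i^n\in\mathcal T_n$ with $(\sigma\circ\theta_{\mathbf i^n})_*\cap B^G_\sigma(x,\epsilon,n)\neq\emptyset$. Since $B^G_\sigma(x,\epsilon,n)\subseteq B^G_\sigma(x,\epsilon,n-1)$ and $\theta_{\mathbf i^n}([-1,1])\subseteq\theta_{\mathbf i^{n-1}}([-1,1])$, every node contributing to $N_n$ has its parent contributing to $N_{n-1}$; so it is enough to show that a parent $\mathbf i^{n-1}$ contributing to $N_{n-1}$ has at most $C_r\,e^{w(G^{n-1}\hat x)/(r-1)}$ children contributing to $N_n$. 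As $N_0=1$, multiplying these bounds over the levels gives exactly $N_n\le C_r^n\,e^{w^n(\hat x)/(r-1)}$.

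First I would localize the labels of the relevant children. Let $\mathbf i^n$ be such a child, choose $y\in(\sigma\circ\theta_{\mathbf i^n})_*\cap B^G_\sigma(x,\epsilon,n)$ and, using item (4) of the Reparametrization Lemma, a point $z\in(\sigma\circ\theta_{\mathbf i^n})_*$ with $k^n(z)=k(\mathbf i^n)$. For every $0\le k\le n-1$ the points $g^ky$ and $g^kz$ lie on the bounded curve $g^k\circ\sigma\circ\theta_{\mathbf i^n}$, so the bounded distortion estimates (\ref{dist})--(\ref{distor}) give $|\log\|d_{g^kz}g(v_{g^kz})\|-\log\|d_{g^ky}g(v_{g^ky})\||\le\log(9/4)$; since $\hat{\mathrm d}(G^k\hat x,G^k\hat y)<\epsilon$, the standing assumption on $\epsilon$ controls the difference with the orbit of $x$ by $1$, both for $\|dg(v)\|$ and for $\|dg\|$ (for the latter one also uses that a strongly $\epsilon$-bounded curve stays within $O(\epsilon)$ of $g^kx$, together with uniform continuity of $z\mapsto\log\|d_zg\|$ on $M$). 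Rounding, $|k_{k+1}(\mathbf i^n)-\log\|d_{g^kx}g\||\le 3$ and $|k'_{k+1}(\mathbf i^n)-\log\|d_{g^kx}g(v_{g^kx})\||\le 3$ for all $k<n$; in particular the last label $(k_n(\mathbf i^n),k'_n(\mathbf i^n))$ ranges over at most $49$ pairs, and for each of them $k_n(\mathbf i^n)-k'_n(\mathbf i^n)=w(G^{n-1}\hat x)+O(1)$.

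For a fixed admissible label $(k_n,k'_n)$, item (5) of the Reparametrization Lemma bounds the number of \emph{blue} children of $\mathbf i^{n-1}$ carrying it by $C_r\,e^{(k_n-k'_n)/(r-1)}\le C_r\,e^{w(G^{n-1}\hat x)/(r-1)}$, which is of the wanted form. The main obstacle is the \emph{red} children: item (5) only bounds them by $C_r\,e^{\max(k'_n,(k_n-k'_n)/(r-1))}$, which is far too large when $rk'_n>k_n$. To absorb this surplus I would return to the construction and use that, through the $O(1)$ Taylor and B\'ezout pieces and the subdivisions of the Reparametrization Lemma, the red children with a given label are the pieces produced by a single application of Lemma~\ref{tech} to an $\epsilon$-bounded curve at time $n$, whose reparametrized pieces have images of size $O(\epsilon\,e^{-(k_n-k'_n)/(r-1)})$ at the penultimate time and, because the parent $\mathbf i^{n-1}$ already lies in $B^G_\sigma(x,\epsilon,n-1)$, are all confined to one ball of radius $O(\epsilon)$ about $g^{n-1}x$; the bounded-overlap property in Lemma~\ref{tech} (its last item) then shows that only boundedly many of them --- hence $\le C_r$ after summing over the $O(1)$ Taylor and B\'ezout pieces --- can actually meet $B^G_\sigma(x,\epsilon,n)$, so again $\le C_r\le C_r\,e^{w(G^{n-1}\hat x)/(r-1)}$. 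Summing over the $\le 49$ admissible labels and the two colours completes the inductive step.

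I expect this last red-children count to be the crux: the bound of item (5) of the Reparametrization Lemma is purely combinatorial and, alone, misses the sharp exponent $w^n(\hat x)/(r-1)$, so the gain must come from the geometry of the dynamical ball --- the fact that the near-orbit pieces collapse, at the penultimate time, to a single ball of controlled size --- combined with the uniform overlap bound already built into Lemma~\ref{tech}. The remaining ingredients (the distortion estimates, the label localization, the blue-children bound) are routine given the Reparametrization Lemma and the properties of bounded curves recalled in Section~\ref{curvee}.
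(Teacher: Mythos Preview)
Your overall strategy --- induction on the level, localization of the labels via bounded distortion and the standing $\epsilon$-assumption, then item (5) for blue children and the overlap clause of Lemma~\ref{tech} for red children --- is exactly the line the paper takes (its sketch rests precisely on the last item of Lemma~\ref{tech} applied to the curves $g\circ\psi\circ\tilde\theta_i$ from the last step of the Reparametrization Lemma). Two points in your red-children count need correction, though neither destroys the argument.

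First, the pieces produced \emph{before} Lemma~\ref{tech} is invoked are not $O(1)$: the first step of the Reparametrization Lemma (the Taylor approximation) already covers $[-1,1]$ by $b^{-1}+1\asymp C_r\,e^{(k_n-k'_n)/(r-1)}$ affine maps $\theta$; only the subsequent B\'ezout and Landau--Kolmogorov cuts are $O_r(1)$ each. Hence the red children attached to a fixed parent and label are spread over $\asymp e^{(k_n-k'_n)/(r-1)}$ separate applications of Lemma~\ref{tech}, not a single one. The overlap clause gives $\le 100$ relevant pieces \emph{per} $\tilde\theta_i$, so the correct red-children bound is $C_r\,e^{(k_n-k'_n)/(r-1)}$ (the same order as the blue bound from item (5)), which is also what the paper states. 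The induction still closes since $e^{(k_n-k'_n)/(r-1)}\asymp e^{w(G^{n-1}\hat x)/(r-1)}$ by your label localization.

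Second, the overlap clause of Lemma~\ref{tech} is a statement at the time at which the lemma is applied, namely time $n$: it bounds how many pieces of the curve $g\circ\psi\circ\tilde\theta_i$ can meet a given $\epsilon$-ball in $M$. Your confinement argument at the penultimate time $n-1$ does not by itself feed this; one needs the relevant image points at time $n$ to lie in a single $\epsilon$-ball. The paper handles this by tracking nodes $\mathbf i^n$ whose image meets $B^G_\sigma(x,\epsilon,n+1)$ (note the index shift in its sketch), which gives $\epsilon$-closeness to $g^nx$ directly; the nested structure $B^G_\sigma(x,\epsilon,n+1)\subset B^G_\sigma(x,\epsilon,n)$ then makes the parent--child recursion go through.
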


\begin{proof}[Sketch of proof]The Corollary follows from the Reparametrization Lemma together with the two following facts :
\begin{itemize}
\item for $y \in B^G_\sigma(x,\epsilon,n)$  we have  $k^n(x)\simeq k^n(y)$ up to $1$ on each coordinate, 
\item  for any $\mathbf i^{n-1}$ there is at most $C_re^{\frac{k(g^nx)-k'(G^n\hat x)}{r-1}}$  nodes $\mathbf i^n\in \overline{\mathcal T}_n$ with $\mathbf i_{n-1}^n=\mathbf i^{n-1}$ and $ \theta_{\mathbf i^n}([-1,1])\cap \sigma^{-1} B^G_{\sigma}(x,\epsilon, n+1)\neq \emptyset$.
\end{itemize}
 This last point  is a consequence of the last item of Lemma \ref{tech} applied to the bounded map $g\circ \psi\circ \tilde{\theta_i}$ introduced in the third step of the proof of the reparametrization lemma. 
\end{proof}

\subsection{The geometric set $E$}

We apply the Reparametrization Lemma to $g=f^p$ for some positive integer $p$. For $x\in \sigma_*$ we define the set $E_p(x)\subset p\mathbb N$ of integers    $mp$ such that there is $\mathbf i^m\in \overline{\mathcal T_m}$ with $k(\mathbf i^m)=k^m(x)$ and  $x\in \sigma \circ \theta_{\mathbf i^m}([-1/3,1/3])$. In particular, any  integer 
$n=mp\in E_p(x)$ is a $(\alpha_p, \epsilon_p)$-geometric time of $x$ for $f$ with $\alpha_p$ and $\epsilon_p$ depending only on $p$ by item (3) of RL. Observe that if $k<m$ we have with $x=\sigma\circ \theta_{\mathbf i^m}(t)$ and $\theta_{\mathbf i^m}(t)=\theta_{\mathbf i^m_k}(s)$:

\begin{align*}
\phi_{mp-kp}(F^{kp}\hat x)&=\frac{\left\|d\left(f^{mp}\circ \sigma \circ \theta_{\mathbf{i}^m}\right)(t)\right\|}{\left\|d\left(f^{kp}\circ \sigma \circ \theta_{\mathbf{i}^m}\right)(t)\right\|},\\
&\geq \frac{2}{3}\frac{\left\|d\left(f^{mp}\circ \sigma \circ \theta_{\mathbf{i}^m}\right)(0)\right\|}{\left\|d\left(f^{kp}\circ \sigma \circ \theta_{\mathbf{i}^m}\right)(t)\right\|}, \text{since $\sigma\circ \theta_{\mathbf i^m}$ is $m$-bounded},\\
&\geq \frac{2}{3}\frac{\left\|d\left(f^{mp}\circ \sigma \circ \theta_{\mathbf{i}^m}\right)(0)\right\|}{\left\|d\left(f^{kp}\circ \sigma \circ \theta_{\mathbf{i}^m_k}\right)(s)\right\|}100^{m-k}, \text{by item (2) of RL,}\\
&\geq \frac{2}{3\epsilon}\left\|d\left(f^{mp}\circ \sigma \circ \theta_{\mathbf{i}^m}\right)(0)\right\|100^{m-k}, \text{as $\sigma \circ \theta_{\mathbf i^n_k}$ is strongly $(k,\epsilon)$-bounded,}\\
&\geq \frac{1}{9}100^{m-k}\geq \left(\frac{1}{10}\right)^{m-k}, \text{ by item (3) of RL.}
\end{align*}
Therefore $E_p$ is $\tau_p$-large with $\tau_p=\frac{\log 10}{p}$-large.

%In this section we define measurable sequences $E,\mathfrak m:B\rightarrow \mathcal P_\mathbb{N}$ for a subset $B$ with $\Leb_{\sigma_*}(B)>0$ such that for some $\epsilon_0$, $a>0$, $\beta>0$, $\alpha>0$ and for all $x\in B$ :\begin{itemize}\item $E(x)$ is $a$-large, \item $\overline{d}^{\mathfrak m}(x)\geq \beta$,\item any $k$ in $E(x)$ is a $(\alpha,\epsilon_0)$-geometric time.\end{itemize}

 % For $x\in \sigma_*$, $\eta>1$  and $n\in \N$ we let $I_n^{x,\eta}$ be the largest subinterval of $I$ of length less than or equal to $\eta^{-n}$ such that $\sigma\circ  \psi_n^x$ is strongly $(n,\epsilon)$-bounded,  where  $\psi_n^x$ denotes  the affine reparametrization $\psi_n^x=\psi_{n,r}^x:[-1,1]\rightarrow  I_n^x$  with $  \sigma \circ \psi_n^x(0)=x$. 

%A positive integer $n$ will be called a \textbf{$(\eta,\alpha,\epsilon)$-geometric time of $x$} when $ |(f^n\circ \sigma \circ \psi_n^x )'(0)|\geq \alpha \epsilon$. This definition does not depend on $\epsilon$, more precisely if $n$ is a  $(\eta,\alpha,\epsilon)$-geometric time of $x$  then it is a  $(\eta,\alpha,\epsilon')$-geometric time of $x$  for any $\epsilon'<\frac{2}{3}\epsilon$. 

 % We will show that on a subset of $A$ of positive $\Leb_{\sigma_*}$ measure, the density of $E=E_p$ is lower bounded. 
  
  \begin{prop}\label{lebgeo} Let $f:M\circlearrowleft $ is a $C^r$ diffeomorphism and $b>\frac{R(f)}{r}$. For $p$ large enough there exists  $\beta_p>0$ such that 
  $$\limsup_n\frac{1}{n}\log \textrm{Leb}_{\sigma_*}\left(\left\{x\in A, \ d_{n}(E_{p}(x))< \beta_p \textrm{ and } \|d_xf^{n}(v_x)\|\geq e^{nb}\right\}\right)<0.$$
  \end{prop}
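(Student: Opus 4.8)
The goal is to show that the set of points $x \in A$ for which the geometric set $E_p(x)$ has small lower density \emph{but} the tangent vector $v_x$ has genuinely exponential growth at rate $b$ up to time $n$ is exponentially negligible in $\Leb_{\sigma_*}$. The plan is to cover this set by the reparametrization pieces provided by the Reparametrization Lemma (applied to $g = f^p$, $\sigma$ strongly $\epsilon$-bounded after rescaling) and to count them. First I would fix the integer $p$ large and write $n = mp$; a point $x$ with $\|d_xf^n(v_x)\| \geq e^{nb}$ lies in $\mathcal H(\mathbf k^m)$ for some admissible $2m$-tuple $\mathbf k^m = k^m(x)$ whose entries satisfy $\sum_{l} k'_l(\mathbf k^m) \gtrsim mpb - O(m)$, since the $k'$-labels record $\log\|d_{f^{(l-1)p}x}f^p(v)\|$ up to bounded error. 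By item (4) of RL, $\sigma^{-1}\mathcal H(\mathbf k^m)$ is covered by the images $\theta_{\mathbf i^m}([-1/3,1/3])$, $\mathbf i^m \in \overline{\mathcal T_m}$, and $\theta_{\mathbf i^m}([-1,1])$, $\mathbf i^m \in \underline{\mathcal T_m}$, with $k(\mathbf i^m) = \mathbf k^m$.

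The key point is to split according to whether $x$ is caught by a \emph{red} leaf or by a \emph{blue} (underlined) leaf. If $mp \in E_p(x)$ then by definition $x$ sits in the $[-1/3,1/3]$-part of some red reparametrization with the correct label, so these contribute to the \emph{complement} of the bad set. On the bad set $\{d_n(E_p(x)) < \beta_p\}$, for a positive-density ($> 1 - \beta_p$, up to constants absorbed into $m$) set of levels $l \le m$ the point must be covered only by blue nodes $\mathbf i^l \in \underline{\mathcal T_l}$ at that level. Now I invoke the two counting bounds in item (5) of RL: red children number at most $C_r e^{\max(k'_l, (k_l - k'_l)/(r-1))}$ while blue children number at most only $C_r e^{(k_l - k'_l)/(r-1)}$ — crucially the blue bound has \emph{no} $e^{k'_l}$ factor. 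Summing the blue count over the admissible labels at a blue level, and the red count at a red level, and then over the at most $2^m$ choices of which levels are blue, one gets that the total $\Leb_{\sigma_*}$-measure of the bad piece is bounded by
\begin{equation*}
\Leb_{\sigma_*}(\text{bad set}) \;\lesssim\; \sum_{\substack{S \subset \{1,\dots,m\} \\ \sharp S \geq (1-\beta_p')m}} C_r^m \prod_{l \in S} \Big(\textstyle\sum_{k_l,k'_l} e^{\frac{k_l - k'_l}{r-1}} \cdot |\theta_{\mathbf i^l}|\Big) \prod_{l \notin S} \Big(\textstyle\sum_{k_l,k'_l} e^{\max(k'_l,\frac{k_l-k'_l}{r-1})} \cdot |\theta_{\mathbf i^l}|\Big),
\end{equation*}
where the length factors $|\theta_{\mathbf i^l}|$ come from passing from counting pieces to summing their lengths (each contraction rate is $< 1/100$, and the curves stay in the dynamical balls so overlaps are bounded by the last item of Lemma \ref{tech}). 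Controlling the sums over labels uses $R(f) = \lim_n \frac1n \log\sup\|d_xf^n\|$: on the blue levels $\sum_{k_l} e^{(k_l-k'_l)/(r-1)}$ is bounded by something like $e^{pR(f)/(r-1) + o(p)}$ while the constraint $\sum_l k'_l \gtrsim mpb$ forces the product of the $e^{k'_l}$ contributions on red levels to be small. The exponential rate one extracts is roughly $R(f)/(r-1) - (1-\beta_p')\cdot(\text{gain from }b)$ plus $\log C_r / p$; since $b > R(f)/r$ one checks the rate is strictly negative for $\beta_p'$ small and $p$ large — this is where the hypothesis $b > R(f)/r$ is used, balancing $R(f)/(r-1)$ against the $k'$-budget $\geq b$ spread over the levels.

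The main obstacle, and the part requiring real care, is the bookkeeping that turns "$d_n(E_p(x)) < \beta_p$" into a genuine deficit of red-leaf levels and then makes the entropy/volume count close with a strictly negative exponent. Concretely: (i) one must handle the at most $2^m$ choices of the blue level set $S$ — this costs a factor $2^m$, which is harmless since it gets absorbed into $C_r^n$ with $C_r$ independent of $p$ and the negative rate is then made to dominate by taking $p$ large so that $\tfrac1p\log(2C_r)$ is small; (ii) one must track the length contributions $|\theta_{\mathbf i^l}|$ correctly through item (2) of RL (contraction rate $b_l \asymp (C_r e^{k_l - k'_l + 4})^{-1/(r-1)}$ from the proof) so that the per-level label-sum $\sum_{k_l,k'_l} e^{\cdots}|\theta_{\mathbf i^l}|$ telescopes into a clean per-level factor; (iii) one must bound $\sum_{k'} $ ranging over the \emph{actual} $k'$-values occurring, using that on the event $\|d_xf^n(v_x)\| \ge e^{nb}$ the average of $k'_l/p$ over $l$ is $\ge b - o(1)$, which via convexity forces the red-level product $\prod_{l\notin S} e^{-k'_l}\cdots$ (appearing because red nodes are \emph{not} being used there, so their count is a liability we avoid, whereas the measure shrinks by the Jacobian $\sim e^{-k'_l}$ along the curve) to beat $e^{-(1-\beta_p')mpb}$-ish. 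Assembling all of this and optimizing gives $\beta_p > 0$ and the strict negativity of the $\limsup$.
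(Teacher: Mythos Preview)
Your strategy is the same as the paper's: cover the bad set by the reparametrization pieces from the Reparametrization Lemma applied to $g=f^p$, exploit that few ancestors can be red when $d_n(E_p(x))<\beta_p$, count paths using item (5), and balance against the hypothesis $b>R(f)/r$. However, your execution is considerably more tangled than it needs to be, and the displayed inequality is not well-formed as written (the symbol $|\theta_{\mathbf i^l}|$ depends on a specific path, not just on $l$ or on the label $(k_l,k'_l)$, so it cannot sit inside a per-level label sum).

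The paper streamlines exactly the three places where you are struggling. First, instead of summing over labels level by level, the paper \emph{fixes} the full tuple $\mathbf k^m=k^m(x)$ once: since each coordinate lies in an interval of length $O(pA_f)$, the number of such tuples is at most $e^{2mpA_fH(pA_f)}$, and $H(pA_f)\to 0$ as $p\to\infty$, so this cost is $e^{o(n)}$. Second, instead of tracking contraction rates through levels, the paper uses a single global length bound: if $x\in(\sigma\circ\theta_{\mathbf i^m})_*$ satisfies $\|d_xf^n(v_x)\|\ge e^{nb}$, then bounded distorsion of the $(m,\epsilon)$-bounded curve gives directly $|(\sigma\circ\theta_{\mathbf i^m})_*|\le 3\epsilon e^{-nb}$. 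Third, with $\mathbf k^m$ fixed, the number of paths with at most $n\beta_p$ red ancestors is bounded crudely by $2^m C_r^m e^{\sum_l(k_l-k'_l)/(r-1)}\|df\|_\infty^{\beta_p p^2 m}$ (blue bound at every level, plus the extra $e^{k'_l}\le \|df\|_\infty^{p}$ at the few red levels); and since $\sum_l k'_l\ge mpb-m$ and $\sum_l k_l\le m\log\|df^p\|_\infty$, one gets $\sum_l(k_l-k'_l)/(r-1)\le m+m(\log\|df^p\|_\infty-bp)/(r-1)$, which under $b>\log\|df^p\|_\infty/(pr)$ is $\le m+m\log\|df^p\|_\infty/r$. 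Multiplying count times length and dividing the log by $n=mp$ yields a rate $\le \frac{\log C_r}{p}+(H(pA_f)+p\beta_p)A_f-b+\frac{\log\|df^p\|_\infty}{pr}$, which is strictly negative for $p$ large and then $\beta_p$ small. Your steps (ii)--(iii) are trying to reproduce this via a level-by-level telescoping and a convexity argument; that route can in principle be made to work, but it is unnecessary and the details as you sketch them do not close.
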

 \begin{proof}It is enough to consider $n=mp\in p\mathbb N$. We apply the Reparametrization Lemma to $g=f^p$ with $\epsilon>0$ being the  scale. Let $\mathcal T$ be the corresponding  tree and $(\theta_{\mathbf i^m})_{\mathbf i^m\in \mathcal T_m}$ its associated affine reparametrizations.   By a standard argument the number of sequences of positive  integers $(k_1,\cdots, k_m)$ with $k_i< M\in \mathbb N$  for all $i$ is less than $e^{mMH(M)}$  where  $H(t):=-\frac{1}{t}\log \frac{1}{t} -(1-\frac{1}{t})\log \left(1-\frac{1}{t}\right)$ for $t>0$.  
  Therefore we can fix the sequence $\mathbf k^m=k^{m}(x)$ up to a factor combinatorial term equal to $e^{2mpA_f H(pA_f)}$ with $A_f:=\log\|df\|_\infty+\log \|df^{-1}\|_\infty+1$. Assume there is $x=\sigma \circ \theta_{\mathbf i^m}(t)$ with $\|d_xf^{n}(v_x)\|\geq e^{nb}$. Then by the distorsion property of the bounded maps $f^n\circ\sigma \circ \theta_{\mathbf i^m}$ and $\sigma \circ \theta_{\mathbf i^m}$  we have 
  \begin{align*}
  |(\sigma\circ \theta_{\mathbf i^m})_*|&\leq 2\|d(\sigma\circ \theta_{\mathbf i^m})\|_\infty,\\
  &\leq  3\|d(\sigma\circ \theta_{\mathbf i^m})(t)\|, \text{ as $\sigma \circ \theta_{\mathbf i^m}$ is bounded,}\\
  &\leq 3 \frac{\|d(f^n\circ \sigma \circ \theta_{\mathbf i^m})(t)\|}{\|d_xf^{n}(v_x)\|} ,\\
  &\leq 3\epsilon e^{-nb}, \text{as $f^n\circ \sigma \circ \theta_{\mathbf i^m}$ is $\epsilon$-bounded.}
  \end{align*}

 Moreover when $x$ belongs to $ (\sigma\circ \theta_{\mathbf{i}^m})_*$ 
 for some $\mathbf{i}^m\in \mathcal T_m$ and    satisfies $d_{n}(E_{p}(x))<\beta_p$,  
 then we have $\sharp \left\{ 0<k<m, \ \mathbf{i}^m_k \in \overline{\mathcal T_k} \right\}\leq n\beta_p$. 
But, by the estimates on the valence of $\mathcal T$ given in the last item of RL,  the number of $m$-paths from the root labelled with  $\mathbf  k^m$ and with at most $n\beta_p $ red nodes are less 
 than $2^mC_r^{m}e^{\sum_i\frac{k_i-k'_i}{r-1}} \|df\|_\infty^{\beta_p p^2m}$ for some constant $C_r$ depending only on $r$. 
 Then if $x\in \mathcal H(\mathbf k^m)$ satisfies $\|d_xf^n(v_x)\|\geq e^{nb}$, we have $e^{\sum_i\frac{k_i-k'_i}{r-1}}\leq e^me^{m\frac{\log\|df^p\|_\infty-bp}{r-1}}$. But, as  $b$ is larger than $\frac{\log \|df^p\|_\infty}{pr}$ for large $p$, we get  for such values of $p$ :
 $\frac{\log\|df^p\|_\infty-bp}{r-1}\leq \frac{1-\frac{1}{r}}{r-1}\cdot \log \|df^p\|_\infty=\frac{\log \|df^p\|_\infty}{r}$. Therefore :
 \begin{align*} \limsup_n \frac{1}{n}\log\textrm{Leb}_{\sigma_*}\left(\left\{x\in A, \ d_{n}(E_{p}(x))< \beta_p \textrm{ and } \|d_xf^{n}(v_x)\|\geq e^{nb}\right\}\right)\\
 \leq \frac{\log C_r}{p}+\left(H(pA_f)+p\beta_p\right)A_f- b+\frac{\log \|df^p\|_\infty}{pr}.\end{align*}
 As $b$ is larger than $\frac{R(f)}{r}$ one can choose firstly $p\in \mathbb N^*$  large then  $\beta_p>0$ small  in such a way the right member is negative. 

 \end{proof}

From now we fix $p$ and the associated quantities satisfying the conclusion of  Proposition \ref{lebgeo} and we will simply write $E,\tau, \alpha, \epsilon, \beta$ for $E_p,\tau_p, \alpha_p, \epsilon_p, \beta_p$. The set $E(x)$ is called the \textbf{geometric set} of $x$.

\subsection{Cover of $F$-dynamical balls by bounded curves}
As a consequence of Corollary \ref{local}, we give now an estimate  of the number of strongly $(n,\epsilon')$-bounded curves  reparametrizing the intersection of a given strongly $\epsilon'$-bouded curve  with a $F$-dynamical ball of length $n$ and radius $\epsilon'$. This estimate will be used in the proof of the F\"olner Gibbs property (Proposition \ref{coeur}).\\  

For any $q\in \mathbb N^*$ we let $\omega_q:\mathbb PTM\rightarrow\mathbb R$ be the map defined  for all $(x,v)\in \mathbb P TM $  by 
$$\omega_q(x,v):=\frac{1}{q}\sum_{k=0}^{q-1}\log \|d_{f^kx} f^q\|-\log\|d_xf(v)\|.$$
Note that $\omega_1=w$. We also write $(\omega_q^n)_n$ the additive associated $F$-cocycle, i.e. 
$$\omega_q^n(x,v)=\sum_{0\leq k<n}\omega_q(F^k(x,v)).$$
\begin{lem}\label{annens}
For any $q\in \mathbb N^*$, there exists $\epsilon'_q>0$ and $B_q>0$ such that for any strongly $\epsilon'_q$-bounded curve $\sigma:[-1,1]\rightarrow M$, for  any  $x\in \sigma_*$ and  for any $n\in \mathbb N^*$ there exists a family $(\theta_i)_{i\in I_n}$ of affine maps of $[-1,1]$ such that :
\begin{itemize}
\item  $B_{\sigma}^F(x,\epsilon'_q, n)\subset \bigcup_{i\in I_n}(\sigma\circ \theta_i)_*$,
\item $\sigma\circ \theta_i$ is $(n,\epsilon'_q)$-bounded  (with respect to $f$) for any $i\in I_n$,
\item $\sharp I_n\leq B_q C_r^{\frac{n}{q}}e^{\frac{\omega_q^n(\hat x)}{r-1}},$ with $C_r$ a universal constant depending only on $r$.
\end{itemize}

\end{lem}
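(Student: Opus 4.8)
The plan is to deduce the estimate from the local Reparametrization Lemma, Corollary~\ref{local}, applied to $g=f^q$. That corollary controls, for the curve $\sigma$, the iterates $f^{qk}$ at multiples of $q$; the remaining tasks are to bring all the intermediate iterates $f^j$, $0\le j\le n$, under control with respect to $f$, and to express the final count in terms of $\omega_q$ — which averages $\log\|df^q\|$ over the length-$q$ windows of the orbit — rather than of the cocycle $w_{f^q}$ read off directly from Corollary~\ref{local}.

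First I would fix $\epsilon'_q>0$ to be a small constant multiple (depending on $q,f,r$) of the scale $\epsilon$ at which the Reparametrization Lemma applies to $g=f^q$, small enough that the $f^i$-images, $0\le i\le q$, of pieces of strongly $\epsilon$-bounded curves have $C^r$-norm $\le\epsilon'_q$ and stay well inside the radius of injectivity (Fa\`a di Bruno and compactness of $M$). Set $m:=\lceil n/q\rceil$. Since $B^F_\sigma(x,\epsilon'_q,n)$ controls the orbit at the times $qk<n$, it is contained in $B^{F^q}_\sigma(x,\epsilon,m)$, and Corollary~\ref{local} applied to $g=f^q$ and $\sigma$ at scale $\epsilon$ produces affine reparametrizations $\theta_{\mathbf i^m}$, $\mathbf i^m\in\mathcal T_m$, whose curves are strongly $(m,\epsilon)$-bounded for $f^q$, together cover $B^F_\sigma(x,\epsilon'_q,n)$, and — keeping only those whose image meets the ball — number at most $C_r^{m}e^{w_{f^q}^m(\hat x)/(r-1)}$, where $w_{f^q}^m(\hat x)=\sum_{k<m}w_{f^q}(F^{qk}\hat x)$ with $w_{f^q}(y,v)=\log\|d_yf^q\|-\log\|d_yf^q(v)\|$.

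Second I would refine each such curve block by block. On a block $[qk,q(k+1)]$, the curve $f^{qk}\circ\sigma\circ\theta_{\mathbf i^m}$, being strongly $\epsilon$-bounded, stays in a cone of opening $\pi/6$, so all its iterates $f^i\circ(\cdot)$, $0\le i\le q$, have derivative of uniformly bounded distortion ($\le\frac32(\|df\|_\infty\|df^{-1}\|_\infty)^q$) and small $C^r$-norm; an elementary reparametrization over the block (contracting by a rate depending only on $q,f,r$ and then applying the splitting of Lemma~\ref{tech}, whose last item keeps the number of pieces that still meet the dynamical ball bounded) turns $\sigma\circ\theta_{\mathbf i^m}$ over that block into curves all of whose iterates $f^j$, $qk\le j\le q(k+1)$, are strongly $\epsilon'_q$-bounded. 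Concatenating over the $m$ blocks yields the desired cover of $B^F_\sigma(x,\epsilon'_q,n)$ by strongly $(n,\epsilon'_q)$-bounded curves for $f$. The decisive point is to organise this refinement so that each $q$-block is handled at cost only a universal $C_r$ — whence the total factor $C_r^m=C_r^{n/q}$ — while it amortises over the length-$q$ window the Yomdin cost that Corollary~\ref{local} attached to the block: this replaces, in the exponent, $\sum_{k<m}\log\|d_{f^{qk}x}f^q\|$ by the window-average $\frac1q\sum_{k<n}\sum_{l<q}\log\|d_{f^{k+l}x}f^q\|$, while the tangential terms of $w_{f^q}^m(\hat x)$ telescope to $-\log\|d_xf^n(v_x)\|$, which is also the tangential term of $\omega_q^n(\hat x)$. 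Thus the exponent becomes $\omega_q^n(\hat x)/(r-1)$ up to an $O_q(1)$ absorbed into $B_q$, giving $\sharp I_n\le B_qC_r^{n/q}e^{\omega_q^n(\hat x)/(r-1)}$.

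The main obstacle is precisely this amortised accounting. Handling the intermediate iterates by a naive step-by-step subdivision would cost a factor per $f$-step, hence $C_r^{n}$ overall, which is far too much; and the cocycle coming straight out of Corollary~\ref{local} for $g=f^q$ only sees the orbit at multiples of $q$, so by itself it gives $w_{f^q}^m$, not $\omega_q^n$. Arranging the construction so that each length-$q$ window of the orbit is processed as a single reparametrization step — paying only the universal $C_r$ there and, in exchange, picking up the window-averaged cocycle $\omega_q$ — is the technical heart of the lemma, and is exactly why the Reparametrization Lemma is applied to $f^q$ and why $\omega_q$, with its $\frac1q$-average of $\log\|df^q\|$, is the natural quantity.
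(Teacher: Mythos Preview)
Your approach has a genuine gap precisely where you yourself identify the ``technical heart of the lemma.'' Applying Corollary~\ref{local} to $g=f^q$ at the single offset $0$ gives a bound whose exponent is $w_{f^q}^m(\hat x)=\sum_{i<m}\log\|d_{f^{qi}x}f^q\|-\log\|d_xf^{qm}(v)\|$, which samples $\log\|d_{\,\cdot\,}f^q\|$ only at the multiples of $q$. Your subsequent refinement step --- subdividing each $q$-block so as to make all intermediate $f$-iterates $\epsilon'_q$-bounded --- can only \emph{increase} the count; it does not alter the exponent already produced by Corollary~\ref{local}. The claimed replacement of $\sum_{i<m}\log\|d_{f^{qi}x}f^q\|$ by the windowed sum $\tfrac1q\sum_{j<n}\sum_{l<q}\log\|d_{f^{j+l}x}f^q\|$ is simply asserted, with no mechanism given; there is no general inequality between these two quantities (the terms $\log\|d_{\,\cdot\,}f^q\|$ need not have a fixed sign), so the passage from $w_{f^q}^m$ to $\omega_q^n$ cannot come ``for free'' from a refinement that merely controls boundedness at intermediate times.

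The paper's device is different and is exactly what you are missing: it applies Corollary~\ref{local} (for $g=f^q$) not once but at \emph{every} offset $k=0,1,\dots,q-1$, i.e.\ to the shifted curves $f^k\circ\sigma\circ\theta$, obtaining for each $k$ a valid cover $\Theta_k$ with $\sharp\Theta_k\lesssim C_r^{n/q}e^{w_{f^q}^{m_k}(F^k\hat x)/(r-1)}$. A single fixed family $\Theta$ of affine contractions, depending only on $q$, is pre- and post-composed to handle all intermediate $f$-iterates at cost $O_q(1)$ --- not $C_r$ per block as you propose. One then \emph{chooses} the offset $k$ minimising $\sharp\Theta_k$; the minimum is bounded by the geometric mean $\bigl(\prod_{k<q}e^{w_{f^q}^{m_k}(F^k\hat x)/(r-1)}\bigr)^{1/q}$, and it is this average over the $q$ offsets --- a Misiurewicz-type shifting trick --- that produces $\omega_q^n(\hat x)$ up to an additive $O_q(1)$: summing $w_{f^q}^{m_k}(F^k\hat x)$ over $k<q$ sweeps $\log\|d_{f^jx}f^q\|$ over all $j<n$, while the tangential parts contribute $q$ copies of $-\log\|d_xf^n(v_x)\|$. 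The averaging over offsets, not any block-by-block refinement, is what converts the cocycle.
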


\begin{proof}
Fix $q$. Let $\epsilon'_q=\epsilon/2$ with $\epsilon$ as in Corollary \ref{local} for $g=f^q$. There is  a family $\Theta$ of affine maps of $[-1,1]$ such that for any strongly $\epsilon'_q$-bounded map $\gamma:[-1,1]\rightarrow M$, the map $\gamma \circ \theta $ is $(q,\epsilon'_q)$-bounded  and $\bigcup_{\theta\in \Theta}\theta_*=[-1,1]$. 

Fix now a strongly $\epsilon'_q$-bounded curve $\sigma:[-1,1]\rightarrow M$ and let $x\in \sigma_*$. We consider only the map $\theta\in \Theta$ such that $B_{\sigma}^F(x,\epsilon'_q,n)\cap (\sigma\circ\theta)_*\neq \emptyset$. For such a map $\theta$ we let $x_\theta \in B_{\sigma}^F(x,\epsilon'_q,n)\cap (\sigma\circ\theta)_*$.

 Take any  $0\leq k<q$. By applying Corollary \ref{local} to "$g=f^q$",  "$\sigma= f^k\circ \sigma \circ \theta$", "$x=f^{k}(x_\theta)$" and "$n=[\frac{n-k}{q}]$", we get a family $\Psi_{\theta, k}$ of affine maps of $[-1,1]$ with 
$$\bigcup_{\psi\in \Psi_{\theta,k}}(\sigma\circ \theta\circ \psi)_* \supset f^{-k}B_{ f^k\circ \sigma \circ \theta}^{F^q}\left(f^k(x_\theta),\epsilon,\left[\frac{n-k}{q}\right]\right)\supset  B_{ \sigma \circ \theta}^{F}(x,\epsilon'_q,n)$$ such that $f^{mq+k}\circ   \sigma \circ \theta\circ \psi$ is $\epsilon$-bounded for $\psi\in \Psi_{\theta,k}$ and integers $m$ with  $0\leq mq+k\leq n$.
  Then $\Theta_k=\{\theta\circ \psi\circ \theta', \ \psi\in  \Psi_{\theta, k} \text{ and } (\theta, \theta')\in \Theta^2\}$ satisfies the two first items of the conclusion. Moreover by letting $m_k=\left[\frac{n-k}{q}\right]$ we have: 
 \begin{align*}
 \sharp \Theta_k& \leq C_r^{m_k} \sharp \Theta^2 e^{w^{m_k}_{f^q}(F^k\hat x)}. 
 \end{align*}
 But for some constant $A_q$ depending only on $q$,  we have 
 $$\min_{0\leq k<q}e^{w^{m_k}_{f^q}(F^k\hat x)}\leq \left(\prod_{0\leq k<q} e^{w^{m_k}_{f^q}(F^k\hat x)}\right)^{1/q}\leq  A_q e^{ \omega_q^n(\hat x)}.$$
This concludes the proof of the lemma, as $\sharp \Theta$ depends only on $q$.

\end{proof}

\section{Existence of SRB measures}\label{srbb}

\subsection{Entropy formula}
By Ruelle's inequality \cite{Ruel}, for any $C^1$ system, the entropy of an invariant measure is less than or equal to the integral of the sum of its positive Lyapunov exponents. For $C^{1+}$ systems, the following entropy characterization of SRB measures was obtained by Ledrappier and Young :
\begin{theorem}\cite{led}\label{leddd}
An invariant   measure  of a $C^{1+}$ diffeomorphism on a compact manifold is a SRB measure if and only it has  a positive Lyapunov exponent almost everywhere and the  entropy is equal to the integral of the sum of its positive Lyapunov exponents. 
\end{theorem}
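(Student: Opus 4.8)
The plan is to prove the two implications separately: the ``only if'' direction (SRB $\Rightarrow$ entropy formula) goes back to Pesin's formula and its extension by Ledrappier--Strelcyn, while the ``if'' direction (entropy formula $\Rightarrow$ SRB) is the substantive part, due to Ledrappier--Young. Throughout I would work inside the Oseledec--Pesin apparatus: the $\mu$-a.e.\ defined splitting $T_xM=\bigoplus_i E_i(x)$ with exponents $\lambda_i(x)$, the unstable subspace $E^u(x)=\bigoplus_{\lambda_i>0}E_i(x)$, the local unstable manifolds $W^u_{loc}(x)$ tangent to $E^u(x)$ (defined on a set exhausted by Pesin blocks on which all estimates are uniform), and the unstable Jacobian $J^u(x)=|\det(d_xf|_{E^u(x)})|$. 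Two preliminary remarks serve both directions: by the subadditive ergodic theorem applied to $\log\|\Lambda^{\mathrm{top}}d_xf^n|_{E^u}\|$ together with Oseledec's theorem, $\int\log J^u\,d\mu=\int\sum_{\lambda_i>0}\lambda_i\,d\mu$; and Ruelle's inequality (quoted above, \cite{Ruel}) already gives $h(\mu)\le\int\sum\lambda_i^+\,d\mu$, so in each direction only the reverse inequality, or the geometric statement, is genuinely at stake.

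For the ``only if'' direction, assume the conditional measures of $\mu$ on the $W^u_{loc}$ are absolutely continuous with respect to the induced Riemannian volume. I would fix an increasing measurable partition $\xi$ subordinate to the unstable lamination in the Ledrappier--Strelcyn sense: $\xi(x)$ is a relatively open neighbourhood of $x$ in $W^u_{loc}(x)$, $f^{-1}\xi$ refines $\xi$, $\bigvee_{n\ge0}f^{-n}\xi$ is the point partition and $\bigwedge_{n\ge0}f^n\xi$ is the $\sigma$-algebra of whole unstable leaves. Then $h_\mu(f,\xi)=H_\mu(f^{-1}\xi\mid\xi)$, and, using absolute continuity of the conditionals $\mu^\xi_x$ together with the change-of-variables formula for $f|_{W^u}$, one computes $H_\mu(f^{-1}\xi\mid\xi)=\int\log J^u\,d\mu=\int\sum\lambda_i^+\,d\mu$. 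Since $h_\mu(f,\xi)\le h(\mu)\le\int\sum\lambda_i^+\,d\mu$ by the previous paragraph, all inequalities become equalities and the entropy formula follows.

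For the ``if'' direction, assume $h(\mu)=\int\sum\lambda_i^+\,d\mu$; the goal is to show the conditionals $\mu^\xi_x$ are absolutely continuous. The engine is a comparison of two exponential rates along the refining partitions $\xi_n:=\bigvee_{k=0}^{n-1}f^{-k}\xi$. On one hand the Shannon--McMillan--Breiman theorem for the increasing partition $\xi$ gives $-\tfrac1n\log\mu^\xi_x(\xi_n(x))\to h_\mu(f,\xi)$ for $\mu$-a.e.\ $x$. On the other hand $\xi_n(x)$ is, up to bounded distortion inside a Pesin block, the $f^{-n}$-image of a piece of $W^u$ of definite size, so Birkhoff's theorem for $\log J^u$ gives $-\tfrac1n\log\mathrm{Leb}_{W^u_{loc}(x)}(\xi_n(x))\to\int\log J^u\,d\mu$. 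One also needs that the hypothesis forces $h_\mu(f,\xi)=h(\mu)$, i.e.\ that under Pesin's formula all the entropy is carried by the unstable direction; granting this, the two limiting rates coincide $\mu$-a.e. A Besicovitch covering / maximal-function argument on each $W^u_{loc}(x)$ then upgrades equal exponential decay of the $\mu^\xi_x$-mass and the Lebesgue mass of the nested atoms to the existence, finiteness and positivity $\mu$-a.e.\ of the density $d\mu^\xi_x/d\mathrm{Leb}_{W^u_{loc}(x)}$, which is exactly absolute continuity; hence $\mu$ is SRB.

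The main obstacle is the ``if'' direction, and inside it two points. First, the identity $h_\mu(f)=h_\mu(f,\xi)$: proving that the stable and neutral directions contribute no entropy is delicate and is where the $C^{1+}$ (H\"older) hypothesis on $df$ really enters, through the tempered control of the distortion of $df$ along orbits inside Pesin blocks. Second, the passage from equality of exponential rates to genuine absolute continuity is not formal --- it is precisely the density argument of Ledrappier--Young, resting on the uniform geometry of $W^u_{loc}$ on Pesin blocks, the absolute continuity of holonomies between unstable leaves, and a Borel--Cantelli-type control of the non-tempered ``bad'' sets. The ``only if'' direction is by contrast comparatively soft once the Ledrappier--Strelcyn partition and the absolute-continuity machinery are available. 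A complete proof is substantial, and in the body of the paper one simply invokes \cite{led}; the above is only the skeleton of that argument.
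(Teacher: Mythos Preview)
The paper does not prove this theorem at all: it is stated with the citation \cite{led} and used as a black box (the entropy characterization of SRB measures due to Ledrappier and Young), exactly as you yourself note in your last sentence. Your sketch is a faithful outline of the original Ledrappier--Young argument, and the obstacles you flag (that $h_\mu(f)=h_\mu(f,\xi)$, and the passage from equal exponential rates to genuine absolute continuity via a density/maximal argument on Pesin blocks) are indeed the substantive points of \cite{led}; there is nothing to compare against in the present paper.
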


As the entropy is harmonic (i.e. preserves the ergodic decomposition), the ergodic components of a SRB measures are also SRB measures. 

%For a surface diffeomorphism we recall also that any ergodic measure with positive entropy has one negative and one positive exponent by applying Ruelle inequality  to $f$ and $f^{-1}$.

\subsection{Lyapunov exponents}
We consider in this susection  a $C^1$ diffeomorphism $f:M\circlearrowleft$. Let $\|\|$ be a Riemaninan structure on $M$. The (forward upper) Lyapunov exponent of $(x,v)$ for  $x\in M$ and $v\in T_xM$ is defined as follows (see \cite{pes} for an introduction to Lyapunov exponents):
\[\chi(x,v):=\limsup_{n\rightarrow +\infty}\frac{1}{n}\log \|d_xf^n(v)\|.\]
%For any positive integer $m$ we have $\chi(f^m,\cdot,\cdot)=m\chi(f,\cdot,\cdot)$.
The function $\chi(x,\cdot)$ admits only finitely many values $\chi_1(x)>...>\chi_{p(x)}(x)$ on 
$T_xM\setminus \{0\}$ and  generates a flag $ 0\subsetneq V_{p(x)}(x) \subsetneq \cdots
\subsetneq V_{1}=T_xM$ with  $V_i(x)=\{ v\in T_xM, \ \chi(x,v)\leq \chi_i(x)\}$. In particular, $\chi(x,v)=\chi_i(x)$ for $v\in V_i(x)\setminus V_{i+1}(x)$.  The function $p$  as well the functions $\chi_i$ and  the vector spaces $V_i(x)$, $i=1,...,p(x)$ are invariant and depend Borel measurably on $x$. One  can  show the maximal Lyapunov exponent $\chi$ introduced in the introduction coincides with $\chi_1$ (see Appendix A). 

A point $x$ is said \textbf{regular} when there exists a decomposition  $$T_xM=\bigoplus_{i=1}^{p(x)} H_i(x)$$ such that $$\forall v\in H_i(x)\setminus \{0\}, \ \lim_{n\rightarrow \pm\ \infty }\frac{1}{|n|}\log \|d_xf^n(v)\|=\chi_i(x)$$
with uniform convergence in $\{v\in H_i(x), \ \|v\|=1\}$ and $$\lim_{n\rightarrow \pm\ \infty }\frac{1}{|n|}\log \Jac \left(d_xf^n\right)=\sum_i \dim(H_i(x))\chi_i(x).$$
In particular we have $V_i(x)=\bigoplus_{j=1}^{i} H_j(x)$ for all $i$. The set $\mathcal R$ of regular points  is an invariant  measurable set of  full measure for any invariant measure  \cite{Os}.  The invariant subbundles $H_i$ are called the Oseledec's bundles.  We also let $\mathcal R^*:=\{x\in \mathcal R, \ \forall i \ \chi_i(x)\neq 0 \}$. For $x\in \mathcal R^*$ we put $E_u(x)= \bigoplus_{i, \ \chi_i(x)>0} H_i(x)$ and $E_s(x)= \bigoplus_{i, \ \chi_i(x)<0} H_i(x)$.\\ 

In the following we only consider surface diffeomorphisms. Therefore we always have $p(x)\leq 2$ and when $p(x)$ is equal to $1$, we let $\chi_2(x)=\chi_1(x)$.  %Let $\mathcal R^+$ be the invariant subset of regular points $x$ of $(M,f)$ with $\chi_1(x)>0\geq\chi_2(x)$. 
When $\nu$ is $f$-invariant we let $\chi_i(\nu)=\int \chi_i\, d\nu$. % and $\chi^+(\nu)$ be the sum of the positive Lyapunov exponents of $\nu$.
%  By Ruelle's inequality, when  an ergodic measure $\nu$ has positive entropy, then $\chi_1(\nu)=\chi_f(\nu)\geq h_f(\nu)>0$ and $-\chi_2(\nu)=\chi_{f^-1}(\nu)\geq h_{f^{-1}}(\nu)=h_f(\nu)>0$ and therefore  $\nu(\mathcal R^+)=1$.  

%In a recent work \cite{BCS}, Buzzi, Crovisier and Sarig showed the following criterion for $C^\infty$ surface diffeomorphism :%\begin{theorem}\cite{BCS}\label{cro}Let $f$ be a $C^\infty$ surface diffeomophism. Assume there is $b>0$ and invariant ergodic  measures $(\mu_\delta)_{\delta>0}$ satisfying $$\forall \delta>0,\ h(\mu_\delta)+\delta\geq \chi^+(\mu_\delta)>b.$$Then $f$ admits a SRB measure. \end{theorem}Observe that in the hypothesis we may remove the assumption of ergodicity. Indeed, let $\mu$ be invariant with $h(\mu)+\delta\geq \chi^+(\mu)>b$. Denote by $\mu=\int \nu \, dM(\nu)$ the ergodic  decomposition of $\mu$. Then we get $M\left(\chi^+\geq b/2\right)\geq c:=\frac{b}{2R(f)-c}$ and $M\left(\chi^+-h>\frac{\delta}{1-c}\right)<1-c$. Therefore there is $\nu$ ergodic with $h(\nu)+\frac{\delta}{1-c}\geq \chi^+(\nu)>b/2$ for all $\delta$.

\subsection{Building SRB measures}\label{building}

 Assume $f$ is a $C^r$, $r>1$, surface diffeomorphism and $\limsup_n\frac{1}{n}\log \|d_xf^n\|>b>\frac{R(f)}{r}$ on a set of positive Lebesgue measure as in the Main Theorem. Take $\epsilon>0$ as in Proposition \ref{lebgeo} (it depends only on $b-\frac{R(f)}{r}>0$). 
By using Fubini's theorem  as in \cite{bur} there is a $C^r$ smooth embedded curve  $\sigma:I\rightarrow M$, which can be assumed to be $\epsilon$-bounded, 
and  a subset $A$ of $\sigma_*$ with $\Leb_{\sigma_*}(A)>0$, such that we have $\limsup_n\frac{1}{n}\log \|d_xf^n(v_x)\|>b$ for all $x\in A$. Here $\Leb_{\sigma_*}$ denotes the  Lebesgue measure on $\sigma_*$ induced by its inherited Riemannian structure as a submanifold of $M$. This a finite measure with $\Leb_{\sigma_*}(M)=|\sigma_*|$.  We can also assume that the countable set of periodic sources has an empty intersection with  $\sigma_*$.

  Let $\mathfrak m$ be the measurable sequence given by the set $\mathfrak m(x):=\{n, \ \|d_xf^n(v_x)\|\geq e^{nb}\}\in \mathfrak N $ for $x\in A$. It follows from Proposition \ref{lebgeo} that for $x$ in a subset of $ A$ of positive $\Leb_{\sigma_*}$ measure we have $d_n\left(E(x)\right)\geq \beta>0$  for $n\in \mathfrak m(x)$ large enough, i.e. we have  by denoting again this subset by $A$: 
$$\forall x\in A, \ \overline{d}(E(x))\geq  \overline{d}^{\mathfrak m}(E(x))\geq \beta.$$

For any $q\in \mathbb N^*$ we let 
$$\psi^q=\phi-\frac{\omega_q}{r-1}.$$

\begin{prop}\label{coeur}
There exists an infinite sequence of positive real numbers $(\delta_q)_{q}$  with $\delta_q\xrightarrow{q\rightarrow \infty} 0$ such that  the property $(H)$ holds with respect  to the additive cocycle  on $\mathbb PTM$ associated to the observable $\psi^q+\delta_q$.
\end{prop}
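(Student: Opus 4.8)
\emph{Proof plan.}
Fix $q\in\mathbb N^*$ and let $\epsilon'_q$ be the scale of Lemma~\ref{annens}; I would first shrink it so that $\epsilon'_q\leq\epsilon$ (the scale fixed after Proposition~\ref{lebgeo}), so that the two continuity requirements preceding Corollary~\ref{local} hold, and so that the oscillation $\eta_q$ of $\omega_q$ and of $\phi$ on $\hat{\mathrm d}$-balls of radius $\epsilon'_q$ tends to $0$ as $q\to+\infty$ (this is legitimate, since the constraint forcing $\epsilon'_q$ small is essentially of order $\|df\|_\infty^{-Cq}$). I then verify $(H)$ with this $\epsilon=\epsilon'_q$: fix a partition $P$ of $\mathbb P TM$ with $\mathrm{diam}(P)<\epsilon'_q$, a large $n\in\mathfrak n$ and $x\in A_n$, and bound $\Leb_{\sigma_*}\bigl(P^{F_n}(\hat x)\cap A_n\bigr)$ from above by $e^{-(\psi^q+\delta_q)^{F_n}(\hat x)}$ for a suitable sequence $\delta_q\downarrow 0$.

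Because $\partial F_n\subset E(x)$, one cuts $F_n$ into its maximal runs $\llbracket c_1,d_1\rrbracket,\dots,\llbracket c_J,d_J\rrbracket$, whose endpoints lie in $E(x)$ and inside each of which the consecutive elements of $E(x)$ split the run into the $E(x)$-irreducible blocks forming $F_n^-$. Since $\mathcal F$ is F\"olner with $\underline d^{\mathfrak n}(\mathcal F)\geq\beta$, one has $J\leq\sharp\partial F_n=n\cdot o(1)$ and $\sharp F_n^-\geq(\beta-o(1))n$ uniformly in $x$. As $\mathrm{diam}(P)<\epsilon'_q$, every $y\in P^{F_n}(\hat x)\cap\sigma_*$ satisfies $\hat{\mathrm d}(F^k\hat y,F^k\hat x)<\epsilon'_q$ for $k\in F_n$, so at each geometric time $c_j$ or $d_j$ its image lies in the bounded curve $D_{c_j}(x)$ resp.\ $D_{d_j}(x)$, up to the bounded multiplicity furnished by the cone estimate \eqref{oscill}. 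Running over $j$, applying Lemma~\ref{annens} with the fixed $q$ inside each run $\llbracket c_j,d_j\rrbracket$ to a strongly $\epsilon'_q$-bounded reparametrization of the relevant sub-arc of $D_{c_j}(x)$ over time $d_j-c_j$, and at each passage restricting to $B(f^{c_{j+1}}x,\epsilon'_q)\subset D_{c_{j+1}}(x)$, I obtain a family $(\sigma\circ\theta_i)_{i\in I}$ of curves which are bounded at every time of every run and cover $P^{F_n}(\hat x)\cap\sigma_*$, with
\[
\sharp I\ \leq\ (C B_q)^{J}\,C_r^{\sharp F_n^-/q}\,\exp\!\Bigl(\tfrac1{r-1}\!\!\sum_{k\in F_n^-}\!\omega_q(F^k\hat x)\ +\ \eta_q\,\sharp F_n^-\Bigr),
\]
since the counting factors of Lemma~\ref{annens} over the runs multiply and $\sum_j\omega_q^{\,d_j-c_j}(F^{c_j}\hat x)=\sum_{k\in F_n^-}\omega_q(F^k\hat x)$.

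Next I would establish a uniform lower bound $\|d_yf^{d_J}(v_y)\|\geq c_0\,(81/16)^{-2J}\,e^{\phi^{F_n}(\hat x)}$ valid for every $y$ lying on one of the pieces. On each run the piece is a bounded curve at every time, so \eqref{distor} controls $\|d_yf^{d_j}(v_y)\|/\|d_yf^{c_j}(v_y)\|$ from below by $\tfrac49\,e^{\phi_{d_j-c_j}(F^{c_j}\hat x)}$; on the complementary stretches $\llbracket d_j,c_{j+1}\rrbracket$ and $\llbracket 0,c_1\rrbracket$, where no a priori bounded geometry is available, one compares $\|d_yf^{c_{j+1}}(v_y)\|/\|d_yf^{d_j}(v_y)\|$ with the corresponding ratio at $x$ using the bounded distortion of $df^{-c_{j+1}}$ and $df^{-d_j}$ on $D_{c_{j+1}}(x)$ and $D_{d_j}(x)$, and then bounds the latter ratio below by $1$ from the $\tau$-largeness of $E(x)$ (chaining through all geometric times of $x$ inside the stretch). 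Since $f^{d_J}(\sigma\circ\theta_i)$ is then a bounded curve contained in $B(f^{d_J}x,\epsilon'_q)$, it follows that
\[
\Leb_{\sigma_*}\bigl((\sigma\circ\theta_i)_*\bigr)\ \leq\ \int_{f^{d_J}(\sigma\circ\theta_i)_*}\!\!\|df^{d_J}\|^{-1}\ \leq\ 2\epsilon'_q\,c_0^{-1}(81/16)^{2J}\,e^{-\phi^{F_n}(\hat x)}.
\]

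Summing over $i\in I$ and collecting exponents gives $\Leb_{\sigma_*}(P^{F_n}(\hat x)\cap A_n)\leq e^{-(\psi^q)^{F_n}(\hat x)}\cdot e^{\,\eta_q\sharp F_n^-+J\log(CB_q(81/16)^2)+\frac1q\sharp F_n^-\log C_r+O(1)}$; comparing with $e^{-(\psi^q+\delta_q)^{F_n}(\hat x)}$ and distinguishing according to whether the term $\omega_q^{F_n}(\hat x)/(r-1)$ provides ample slack or the covering estimate of $\sharp I$ is the binding one, one checks that all the correction terms beyond $-(\psi^q)^{F_n}(\hat x)$ are either $o(\sharp F_n^-)$ (the $J$-terms and the $O(1)$, as $J=o(n)$ and $\sharp F_n^-\asymp n$) or arbitrarily small per unit of $\sharp F_n^-$ when $q\to+\infty$ (the terms $\eta_q$ and $\tfrac1q\log C_r$), so that for a suitable $\delta_q\downarrow 0$ and all $n$ beyond some $N$ the claimed inequality holds; this is precisely $(H)$ for the additive cocycle of $\psi^q+\delta_q$. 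I expect the genuinely delicate point to be the treatment of the non-geometric stretches $\llbracket d_j,c_{j+1}\rrbracket$, for which the bounded distortion at the geometric-time checkpoints together with the $\tau$-largeness of $E(x)$ are exactly what must be used to transfer the expansion estimate from $x$ to the covering pieces; everything else is routine bookkeeping of error terms.
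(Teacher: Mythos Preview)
Your overall architecture---cover $P^{F_n}(\hat x)\cap A_n$ by bounded pieces through the geometric-time checkpoints via Lemma~\ref{annens}, then bound each piece by a derivative estimate---is close in spirit to the paper's, but there is a real gap at the first step, and it propagates.

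You assert that for $y\in P^{F_n}(\hat x)\cap\sigma_*$ the image $f^{c_j}y$ lies in $D_{c_j}(x)$ ``up to the bounded multiplicity furnished by the cone estimate~\eqref{oscill}''. This is not true. The cone estimate only shows that each bounded piece $D_{c_j}(y)$ crosses the ball $B(f^{c_j}x,\epsilon'_q)$ and that two such crossings are either equal or disjoint (this is precisely Lemma~\ref{first}); it does \emph{not} bound how many distinct branches of $f^{c_j}\sigma$ pass through the ball, and that number can be exponential in $c_j$. Consequently your family $(\sigma\circ\theta_i)_{i\in I}$, built from the single branch $D_{c_j}(x)$ at each checkpoint, does not cover $P^{F_n}(\hat x)\cap A_n$, and if you enlarge it to include all branches your bound on $\sharp I$ explodes. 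The same wrong assumption underlies your derivative estimate on the gaps $\llbracket 0,c_1\rrbracket$ and $\llbracket d_j,c_{j+1}\rrbracket$: bounded distortion on $D_{c_{j+1}}(x)$ compares points \emph{on that curve} with $x$, and $f^{c_{j+1}}y$ need not lie there.

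The paper does not try to bound the multiplicity. Instead it restricts to $y\in A^{\partial F_n}\supset A_n$ (so that every $k\in\partial F_n$ is a geometric time for $y$), obtains the pairwise disjoint family $\bigl(D_k(y_j)\bigr)_{j\in J}$ of Lemma~\ref{first}, and uses that the preimages $H_k(y_j)\subset\sigma_*$ are disjoint with $|H_k(y_j)|\gtrsim \alpha\epsilon\, e^{-\phi_k(\hat y_j)}$ to get the key weighted bound $\sum_{j\in J}e^{-\phi_k(\hat y_j)}\leq \tfrac{9}{4\alpha}$ (inequality~\eqref{mar}). The proof then runs by induction on the number of connected components of $F_n$: for the first component $\llbracket k,l\rrbracket$ one applies Lemma~\ref{annens} on each branch $D_k(y_j)$ separately (packaged as Lemma~\ref{third}), uses bounded distortion on that branch and the partition constraint $|\phi_{l-k}(F^k\hat y_i)-\phi_{l-k}(F^k\hat x)|<(l-k)\delta/3$ to pass from $\hat y_i$ to $\hat x$, and invokes the induction hypothesis on the remaining components; the sum over $j\in J$ is then controlled by~\eqref{mar}, not by any multiplicity bound. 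This weighted-sum mechanism is exactly the missing idea in your argument.
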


 We prove now  the existence of  a SRB measure  assuming  Proposition \ref{coeur}, whose proof is given in the next section.   This is  a first step in the proof of the Main Theorem. We will  apply the results of the first  sections  to the projective action $F:\mathbb PTM\circlearrowleft$ induced by $f$, where we consider:
\begin{itemize}
\item the additive derivative cocycle $\Phi=(\phi_k)_k$ given by $\phi_k(x,v)=\log \|d_xf^k(v)\|$,
\item   the measure $\lambda=\lambda_\sigma$ on $\mathbb PTM$ given by $ \mathfrak s^*\Leb_{\sigma_*}$ with $\mathfrak s:x\in \sigma_*\mapsto (x,v_x)$, 
%\item the measurable sequence $\mathfrak m(x)=\{n, \ \|d_xf^n(v_x)\|\geq e^{nb}\}$,
\item  the  geometric set   $E$,  which is $\tau$-large with respect to $\Phi$, 
\item  the additive cocycles $\Psi^q$ associated to $\psi^q+\delta_q$.
\end{itemize}

 The topological extension   $\pi:(\mathbb P TM,F)\rightarrow (M,f)$ is principal\footnote{i.e. $h_f(\pi\mu)=h_F(\mu)$ for all $F$-invariant measure $\mu$.}    by a straightforward application of Ledappier-Walters variational principle \cite{led} and Lemma 3.3 in \cite{shub}. In fact this holds in any dimension and more generally for any finite dimensional vector bundle morphism instead of $df:TM\circlearrowleft$.

Let $\mathcal F=(F_n)_{n\in \mathfrak n}$ and $(A_n)_{n\in \mathfrak n}$ be the sequences associated to $E$ given by Lemma \ref{measurable}. Rigorously $E$ should be defined on the projective tangent bundle, but as 
$\pi$ is one-to-one on  $\mathbb PT\sigma_*$ there is no confusion. In the same way we see the sets $A_n$, $n\in \mathbb N$, as subsets of $A\subset\sigma_*$. 
Any weak-$*$ limit $\mu$ of $\mu_n^{F_n}$ is invariant under $F$ and thus supported by Oseledec's bundles. Let $\nu=\pi\mu$. By Lemma \ref{large}, $\mu$ is supported by the unstable bundle $E_u$ and $\phi_*(\hat x)\geq \tau>0$ for $\mu$ a.e. $\hat x\in \mathbb PTM$.  
Note also that $\phi_*(\hat x)\in \{\chi_1(\pi \hat x), \chi_2(\pi \hat x)\}$ for $\mu$-almost every $\hat x$. We claim that $\phi_*(\hat x)=\chi_1(\pi \hat x)$. If not $\nu$ would have an ergodic component with two positive exponents. It is well known such a measure is necessarily a periodic measure associated to a periodic source $S$. But there is an open neighborhood $U$ of the orbit of $S$ with $f^{-1}U\subset U$ and $\sigma_*\cap U=\emptyset$. In particular we have  $\pi\mu_n^{F_n}(U)=0$ for all $n$ because  $\pi\mu_n^{F_n}\left(\bigcup_{N\in \mathbb N}f^N\sigma_*\right)=1$ and $f^N\sigma_*\cap U=f^N(\sigma_*\cap f^{-N}U)\subset f^N(\sigma_*\cap U)=\emptyset$. By taking the limit in $n$ we would obtain $\nu(S)=0$. Therefore $\phi_*(\hat x)=\chi_1(\pi \hat x)>\tau$ for $\mu$-almost every $x$ and   $\chi_1(x)>\tau >0\geq \chi_2(x)$ for $\nu$-almost every $x$.

 Then by Proposition \ref{por} and Proposition \ref{coeur}  we obtain:
\begin{align*}\label{pa}h(\nu)=h(\mu)&\geq \int \psi^q\, d\mu +\delta_q,\nonumber\\
&\geq \int \phi\, d\mu- \frac{1}{r-1}\int \omega_q\, d\mu+ \delta_q,\\
& \geq \chi_1(\nu) -\frac{1}{r-1}\left( \frac{1}{q}\sum_{k=0}^{q-1}\int \log\|d_{f^kx}f^q\|\, d\nu(x)- \chi_1(\nu)\right)+\delta_q, \\
&\geq \chi_1(\nu)-\frac{1}{r-1}\left(\frac{1}{q}\int \log\|d_{x}f^q\|\, d\nu(x)-\chi_1(\nu)\right)+\delta_q.
\end{align*}
By a standard application of the subadditive ergodic theorem, we have 
$$\frac{1}{q}\int \log\|d_{x}f^q\|\, d\nu(x)\xrightarrow{q\rightarrow +\infty}\int \chi_1(x) \, d\nu(x)=\chi_1(\nu).$$
Therefore $h(\nu)\geq \chi_1(\nu)$, since $\delta_q\xrightarrow{q\rightarrow \infty}0$. Then Ruelle's inequality implies $h(\mu)= \chi_1(\nu)$. According to Ledrappier-Young characterization (Theorem \ref{leddd}), the measure 
 $\nu$ is a SRB measure of  $f$. Note also that any ergodic component $\xi$ of $\nu$ is also a SRB measure, therefore $h(\xi)=\chi_1(\xi)>\tau$. But by Ruelle inequality applied to $f^{-1}$, we get also $h(\xi)\leq -\chi_2(\xi)$. In particular we have $\chi_1(x)>\tau>0>-\tau>\chi_2(x)$ for $\nu$-almost every $x$. 

\begin{rem}
We have assumed that $r\geq 2$ is an integer. The proof can be adapted without difficulties  to the non integer case $r>1$. 
\end{rem}

\subsection{Proof of the F\"olner Gibbs property (H)}
In this subsection we prove Proposition \ref{coeur}.  We will show that for any $\delta>0$ 
there is $q$ arbitrarily large and $\epsilon'_q>0$  
such that we have for any partition $P$ of $\mathbb PTM$ with  diameter less than $\epsilon'_q$:
\begin{equation}\label{fred}\exists n_* \ \forall x\in A_n\subset \sigma_* \textrm{ with } n_*<n\in \mathfrak{n}, \  \ \  \frac{1}{\lambda_\sigma\left(P^{F_n}(\hat x)\cap \pi^{-1}A_n\right)}\geq e^{\delta \sharp F_n}e^{\psi_q^{F_n}(\hat x)}, \end{equation}
where we denote  $\psi_q^{F_n}(\hat x):=\sum_{k\in F_n}\psi^q(F^k\hat x)$ to simplify the notations.

For $G\subset \N$ we let $A^G$ be the set of points $x\in A$  with $G\subset E(x)$. When $G=\{ k\}$ or $\{k,l\}$ with $k,l\in \mathbb N$, we just let $A^G= A^k$ or $A^{k,l}$. We recall that  $\partial F_n\subset E(x)$ for all $x\in A_n$, in others terms $A_n\subset A^{\partial F_n}$.   We will show (\ref{ire}) for $A^{\partial F_n}$ in place  of $A_n$.

Fix the error term $\delta>0$. Let $q$ be so large that $C_r^{1/q}<e^{\delta/3}$ and $\epsilon'_q$ as in Lemma \ref{annens}.
 Without loss of generality we may  assume $\epsilon'_q<\frac{\alpha\epsilon}{4}$. Recall $\epsilon, \alpha>0$ corresponds to the fixed  scales in the definition of the geometric set $E$. 
  We can also ensure that\begin{equation}\label{ire}\forall  \hat x, \hat y \in \mathbb PTM \text{ with } \hat{\mathrm{d}}(\hat x, \hat y)<\epsilon'_q, \ \  |\phi(\hat x)-\phi(\hat y)|<\delta/3. \end{equation}  
 
  In the next triplets lemmas we consider a strongly  $\epsilon$-bounded  curve $\sigma$.

\begin{lem}\label{first}
For any subset $N$ of $M$, any $k\in \N$ and any ball $B_k$ of radius less than $\epsilon'_q$, there exists a finite family  $(y_j)_{j\in J}$ of 
$\sigma_*\cap A^k\cap f^{-k}B_k\cap N$ such that :
\begin{itemize}
\item $B_k\cap f^{k}(\sigma_*\cap A^k\cap N)\subset \bigcup_{j\in J}D_k(y_j)$, 
\item $B_k\cap D_k(y_j)$, $j\in J$, are pairwise disjoint.
\end{itemize}
\end{lem}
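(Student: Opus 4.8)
The plan is a Vitali-type maximal selection inside the embedded curve $\Gamma:=f^k\sigma_*$ (recall $\sigma$ is embedded and $f$ a diffeomorphism, so $\Gamma$ is an embedded $C^r$ arc). Write $Y:=\sigma_*\cap A^k\cap f^{-k}B_k\cap N$. For $y\in Y$ one has $k\in E(y)$, so $k$ is an $(\alpha,\epsilon)$-geometric time of $y$ and $D_k(y)$ is a bounded sub-arc of $\Gamma$ on which $f^ky$ lies at semi-length $>\alpha\epsilon$ from either extremity; in particular $D_k(y)$ contains every point of $\Gamma$ lying at arc-length $\leq\alpha\epsilon$ from $f^ky$ measured along $\Gamma$.

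The one elementary estimate to record is: if $w\in D_k(y)\cap B_k$, then the arc-length along $\Gamma$ between $f^ky$ and $w$ is $<\alpha\epsilon/2$, provided $\epsilon'_q$ (chosen in Lemma \ref{annens}, which we are free to shrink) is small enough compared with $\alpha\epsilon$. Indeed the sub-arc of $\Gamma$ joining $f^ky$ and $w$ lies inside the bounded curve $D_k(y)$; by (\ref{oscill}) a bounded curve stays in a cone of half-opening $\leq\pi/6$, so its Euclidean displacement is at least $\tfrac{\sqrt3}{2}$ times its arc-length; since $\|w-f^ky\|\leq\diam B_k<2\epsilon'_q$, the arc-length between $f^ky$ and $w$ is at most $\tfrac{4}{\sqrt3}\epsilon'_q<\alpha\epsilon/2$.

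Now I would run the selection: well-order $Y$ and keep $y\in Y$ whenever $B_k\cap D_k(y)$ is disjoint from $B_k\cap D_k(y')$ for every previously kept $y'$; call $(y_j)_{j\in J}$ the kept points, all of them in $\sigma_*\cap A^k\cap f^{-k}B_k\cap N$. Pairwise disjointness of $\big(B_k\cap D_k(y_j)\big)_{j\in J}$ holds by construction. For the covering, take $z=f^ky\in B_k\cap f^k(\sigma_*\cap A^k\cap N)$, so $y\in Y$: either $y$ was kept, and then $z\in D_k(y)$; or $B_k\cap D_k(y)$ meets $B_k\cap D_k(y_j)$ at some $w$ for a kept $y_j$, and applying the estimate twice, the $\Gamma$-arc-lengths from $f^ky_j$ to $w$ and from $w$ to $z$ are each $<\alpha\epsilon/2$, so $z$ lies at $\Gamma$-arc-length $<\alpha\epsilon$ from $f^ky_j$, whence $z\in D_k(y_j)$. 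Finally $J$ is finite: for $j\neq j'$ the $\Gamma$-arc-length between $f^ky_j$ and $f^ky_{j'}$ must be $\geq\alpha\epsilon$, for otherwise $f^ky_{j'}\in D_k(y_j)$ and then $f^ky_{j'}\in\big(B_k\cap D_k(y_j)\big)\cap\big(B_k\cap D_k(y_{j'})\big)$, contradicting disjointness; since $\Gamma$ has finite length, at most $|\Gamma|/(\alpha\epsilon)+1$ points can be pairwise $\geq\alpha\epsilon$ apart along it.

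I expect no serious obstacle here: the only point that needs attention is the elementary estimate above, where one must keep track of the numerical constants so that the fixed $\epsilon'_q$ is small enough relative to $\alpha\epsilon$, together with the repeated use of the fact that $D_k(y)$ is a sub-arc of the embedded curve $f^k\sigma_*$, so that the $\Gamma$-arc joining any two of its points stays inside it. Everything else is routine bookkeeping of the maximal-selection argument.
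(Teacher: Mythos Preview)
Your argument is correct. The Vitali--type greedy selection together with the arc--length estimate (which does require $\epsilon'_q$ slightly smaller than the paper's $\alpha\epsilon/4$, namely $\epsilon'_q<\tfrac{\sqrt3}{8}\alpha\epsilon$) gives both the covering and the disjointness, and the $\alpha\epsilon$--separation along $\Gamma$ yields finiteness; the triangle inequality for arc--length on an embedded arc is legitimate since arc--length is just $|t-t'|$ in the arc--length parametrization. The paper, however, proceeds more geometrically and avoids both the greedy selection and the numerical constants: it observes that since $D_k(y)$ has semi--length $>\alpha\epsilon>4\epsilon'_q$ and lies in a cone of opening $\pi/6$, the set $D_k(y)\cap 2B_k$ is a single sub--arc of $\Gamma$ \emph{crossing} the enlarged ball $2B_k$ (both endpoints on $\partial(2B_k)$), hence it coincides with the connected component of $\Gamma\cap 2B_k$ containing $f^ky$. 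Two such crossing sub--arcs of the embedded curve $\Gamma$ are therefore either disjoint or equal, which immediately partitions $Y$ into finitely many classes; one representative per class suffices. Your approach trades this structural observation for an explicit metric argument; it is slightly longer but perhaps more transparent for readers unfamiliar with the ``crossing arc'' picture, while the paper's version is shorter and needs no adjustment of $\epsilon'_q$.
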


\begin{minipage}{.85\textwidth}%
\begin{proof}Let $y\in \sigma_*\cap A^k\cap N$ with $f^ky\in B=B_k$. Let $2B$ be the ball of same center as $B$ with twice radius. The curve $D_k(y)$ lies in  a  cone with opening angle $\pi/6$ by (\ref{oscill}).  Moreover its length is  larger than $\alpha\epsilon>4\epsilon'_q$. By elementary euclidean geometric arguments,   the set    $D_k(y)\cap 2B$ is a curve crossing  $2B$, i.e. its two endpoints lies in the boundary of $2B$. Two such  subcurves of $f^k\circ \sigma$ if not disjoint are necessarily equal.
\end{proof}
\end{minipage}%
\hfill
\begin{minipage}{.35\textwidth}%
\includegraphics[scale=0.2]{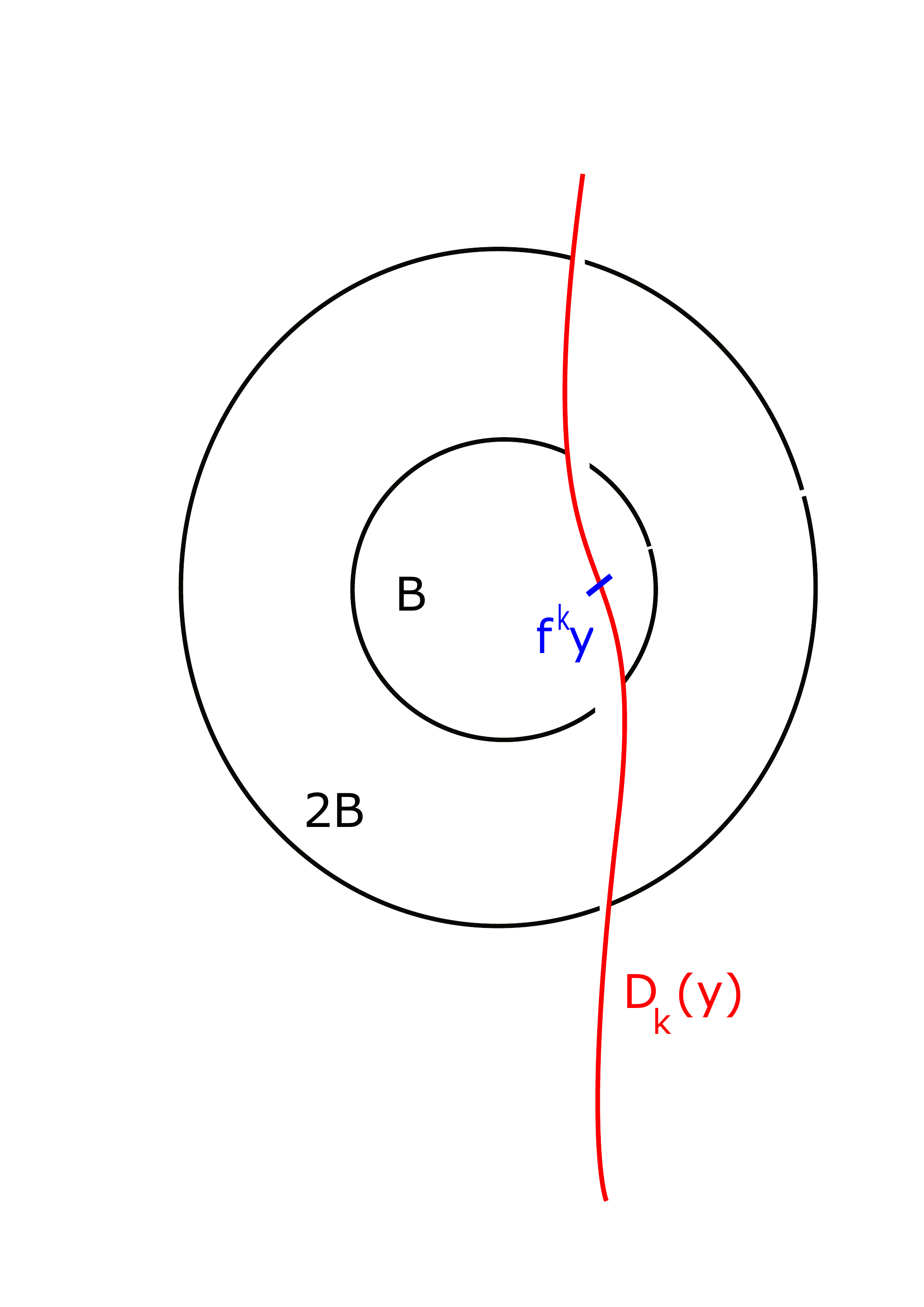}
%\captionsetup{width=0.8\textwidth}
\end{minipage}%

%To simplify the notations, when $x$ belongs to $\sigma_*$ we just let $\phi_k(x)=\log \|d_xf^k(v_x)\|$ for any positive integer $k$.   % and $\Jac f^k(x)=e^{\phi_k(x)}$ for all $k$.
  As the distorsion is bounded on $D_k(y_i)$ by (\ref{distor}) and the semi-length of $D_k(y_i)$ is larger than $\alpha \epsilon$ (because  $y_i$ belongs to $A^k$), we have 
 \begin{align}\label{mar}
 \sum_{i\in I}\frac{4}{9}e^{-\phi_k (\hat y_i)}|D_k(y_i)|&\leq \sum_{i\in I}\left|f^{-k}D_k(y_i)\right|,\nonumber \\
 &\leq |\sigma_*|\leq 2\epsilon,\nonumber \\
  \sum_{i\in I}2\alpha\epsilon e^{-\phi_k (\hat y_i)} &\leq \frac{9}{2}\epsilon,\nonumber \\
 \sum_{i\in I}e^{-\phi_k (\hat y_i)} &\leq \frac{9}{4\alpha}.
 \end{align}

\begin{lem}\label{second}
For any subset $N$ of $M$ and any dynamical ball $B_{\G 0,k\R}:=B_\sigma^F(x,\epsilon'_q,k+1)$,  there exists a finite family  $(z_i)_{i\in I}$ of 
$\sigma_*\cap A^{k}\cap B_{\G 0,k\R}\cap N$ such that 
\begin{itemize}
\item $f^{k}\left(\sigma_*\cap A^{k}\cap B_{\G 0,k\R}\cap N\right)\subset \bigcup_{i\in I}D_k(z_i)$, 
\item $B(f^kx, \epsilon'_q)\cap D_k(z_i)$, $i\in I$, are pairwise disjoint, 
\item $\sharp I \leq B_qe^{\delta k/3}e^{\frac{\omega_q^k(\hat x)}{r-1}}$ for some constant $B_q$ depending only on $q$.
\end{itemize}
\end{lem}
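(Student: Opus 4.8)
The plan is to combine the covering of the $F$-dynamical ball $B_\sigma^F(x,\epsilon'_q,k+1)$ by $(k,\epsilon'_q)$-bounded curves coming from Lemma \ref{annens} with the disjointification argument already used in Lemma \ref{first}. First I would apply Lemma \ref{annens} with the chosen integer $q$ (and the corresponding scale $\epsilon'_q$, which we have shrunk so that $\epsilon'_q<\alpha\epsilon/4$ and so that \eqref{ire} holds), to the strongly $\epsilon$-bounded curve $\sigma$, the point $x$ and the integer $k$: this yields a family $(\theta_i)_{i\in I_k}$ of affine maps of $[-1,1]$ such that $B_\sigma^F(x,\epsilon'_q,k)\subset\bigcup_{i\in I_k}(\sigma\circ\theta_i)_*$, each $\sigma\circ\theta_i$ is $(k,\epsilon'_q)$-bounded, and $\sharp I_k\le B_q C_r^{k/q} e^{\omega_q^k(\hat x)/(r-1)}$. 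Since $q$ was taken large enough that $C_r^{1/q}<e^{\delta/3}$, we have $\sharp I_k\le B_q e^{\delta k/3} e^{\omega_q^k(\hat x)/(r-1)}$, which is exactly the cardinality bound claimed (note $B_\sigma^F(x,\epsilon'_q,k+1)\subset B_\sigma^F(x,\epsilon'_q,k)$, so it suffices to cover the latter).

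Next I would turn each curve $\sigma\circ\theta_i$ into a geometric-time datum at a point of $A^k$. For each $i\in I_k$ such that $(\sigma\circ\theta_i)_*$ meets $\sigma_*\cap A^{k}\cap B_{\G 0,k\R}\cap N$, pick a point $z_i$ in this intersection lying on $(\sigma\circ\theta_i)_*$; since $z_i\in A^{k}$, the integer $k$ is a $(\alpha,\epsilon)$-geometric time of $z_i$, so the curve $D_k(z_i)=f^k\circ\gamma_k$ (of maximal length, as in Section \ref{curvee}) is $\epsilon$-bounded, has semi-length at least $\alpha\epsilon$, and stays in a cone of opening angle $\pi/6$ around $f^k z_i$ by \eqref{oscill}. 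Because $z_i\in B_\sigma^F(x,\epsilon'_q,k+1)$ we have $\hat{\mathrm d}(G^k\hat x,G^k\hat z_i)<\epsilon'_q$, hence $\mathrm d(f^k x,f^k z_i)<\epsilon'_q$, so $f^k z_i$ lies in the ball $B(f^k x,\epsilon'_q)$. Every point of $f^k(\sigma_*\cap A^{k}\cap B_{\G 0,k\R}\cap N)$ lies on some $f^k(\sigma\circ\theta_i)_*$, and a short argument (the curve through that point reparametrized by the maximal bounded piece) shows it lies on the corresponding $D_k(z_i)$; this gives the first item.

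Finally, for the disjointness I would repeat verbatim the Euclidean geometry argument of Lemma \ref{first}, now with the fixed ball $B=B(f^k x,\epsilon'_q)$ and $2B=B(f^k x,2\epsilon'_q)$: each $D_k(z_i)$ has length $\geq 2\cdot\alpha\epsilon\geq 8\epsilon'_q>$ twice the radius of $2B$, it lies in a cone of opening $\pi/6$, and its center $f^k z_i$ is within $\epsilon'_q$ of the center of $B$, so $D_k(z_i)\cap 2B$ is a subcurve crossing $2B$ with both endpoints on $\partial(2B)$; two such crossing subcurves of the (immersed, with controlled curvature $\le 1/(\alpha\epsilon)$) curve $f^k\circ\sigma$ are either equal or disjoint, and after discarding repetitions we may assume the $D_k(z_i)$ — and a fortiori the $B(f^k x,\epsilon'_q)\cap D_k(z_i)$ — are pairwise disjoint, which is the second item. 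The cardinality bound survives discarding. The main obstacle I anticipate is the bookkeeping in the middle step: matching a $(k,\epsilon'_q)$-bounded reparametrizing curve to the \emph{maximal-length} bounded curve $\gamma_k$ defining $D_k(z_i)$, and checking that the image $f^k(\sigma_*\cap A^k\cap B_{\G 0,k\R}\cap N)$ is genuinely covered by the $D_k(z_i)$ rather than merely by the $f^k(\sigma\circ\theta_i)_*$ — this uses that $k$ being a geometric time of $z_i$ forces any bounded piece of $f^k\circ\sigma$ through $f^k z_i$ of length up to $\epsilon'_q\ll\alpha\epsilon$ to sit inside $D_k(z_i)$.
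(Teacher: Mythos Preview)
Your proposal is correct and follows essentially the same approach as the paper: apply Lemma \ref{annens} to bound the number of $(k,\epsilon'_q)$-bounded reparametrizing curves covering the dynamical ball, observe that each such curve (or rather its $f^k$-image) sits inside some $D_k(z)$ since $D_k(z)$ is defined via the \emph{maximal} bounded piece, and invoke the crossing argument of Lemma \ref{first} at the ball $B(f^kx,\epsilon'_q)$ to get disjointness. The paper's proof is terser---it starts from the family of all $D_k(z)$ (which gives the first two items as in Lemma \ref{first}) and then bounds their number by noting that each reparametrizing curve from Lemma \ref{annens} lands in exactly one of them---but the ingredients and the logic are the same as yours.
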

\begin{proof}
As in the previous lemma we consider the subcurves $D_k(z)$ for $z \in \sigma_*\cap A^{k}\cap B_{\G 0,k\R}\cap N$. By Lemma \ref{annens} we can reparametrize $B_{\G 0,k\R}$ by a family of  strongly  $(k, \epsilon'_q)$-bounded curves with cardinality less than $B_qC_r^{\frac{k}{q}}e^{\omega_q^k(\hat x)}$. Each of this curve is contained in some $D_k(z)$. But as already mentioned,  the sets $B(f^kx,\epsilon'_q)\cap D_k(z)$, $z\in  \sigma_*\cap A^{k}\cap B_{\G 0,k\R}$,  are either disjoint or equal. 
\end{proof}

\begin{lem}\label{third}
For any dynamical ball $B_{\G k,l\R}:=f^{-k}B_{\sigma}^F(f^kx,\epsilon'_q,l-k+1)$,  there exists a finite family  $(y_i)_{i\in I}$ of 
$\sigma_*\cap A^{k,l}\cap B_{\G k,l\R}$ and a partition $I=\coprod_{j\in J}I_j$ of $I$ with $j\in I_j$ for all $j\in J\subset I$  such that  
\begin{itemize}
\item $ f^{l}(\sigma_*\cap A^{k,l}\cap B_{\G k,l\R})\subset \bigcup_{i\in I}D_l(y_i)$, 
\item $B(f^lx,\epsilon'_q)\cap D_l(y_i)$, $i\in I$, are pairwise disjoint, 
\item $\forall j\in J \ \forall i,i'\in I_j, \ D_k(y_i)\cap B(f^kx,\epsilon'_q)=D_k(y_{i'})\cap B(f^kx,\epsilon'_q)$,
\item $\forall j\in J, \ \sharp I_j \leq B_qe^{\delta (l-k)/3}e^{\frac{\omega_q^{l-k}(F^k\hat x)}{r-1}}$  for some constant $B_q$ depending only on $q$.
\end{itemize}
\end{lem}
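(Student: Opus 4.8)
The plan is to obtain Lemma \ref{third} by first partitioning at time $k$ exactly as in Lemma \ref{first}, and then running the counting argument of Lemma \ref{second} on the dynamics between times $k$ and $l$, restarted from the curve pieces produced at time $k$.

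First I would split $S:=\sigma_*\cap A^{k,l}\cap B_{\G k,l\R}$ at time $k$. For $y\in S$ one has $k\in E(y)$, so $D_k(y)$ is a bounded curve of semi-length $>\alpha\epsilon>4\epsilon'_q$ through $f^ky$, and $f^ky\in B(f^kx,\epsilon'_q)$ since $y\in B_{\G k,l\R}$. By the cone estimate (\ref{oscill}), as in the proof of Lemma \ref{first}, the arcs $D_k(y)\cap B(f^kx,\epsilon'_q)$, $y\in S$, cross the ball $B(f^kx,\epsilon'_q)$, so any two of them are equal or disjoint. Declaring $y\sim y'$ when these arcs coincide, I pick a representative $y_j$, $j\in J$, in each class $S_j$; note that $f^{-k}\!\big(D_k(y_j)\cap B(f^kx,\epsilon'_q)\big)$ is a sub-arc of $\sigma_*$ containing all of $S_j$, since $f^{-k}$ is injective and $f^ky\in D_k(y_j)\cap B(f^kx,\epsilon'_q)$ for every $y\in S_j$.

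Next, I would fix $j\in J$ and cover $f^l(S_j)$. I cover $D_k(y_j)\cap B(f^kx,\epsilon'_q)$ by a uniformly bounded number of strongly $\epsilon$-bounded curves (via Lemma \ref{tech}) and apply to each the argument of Lemma \ref{second}, now with time horizon $l-k$, with the dynamical ball of radius $\epsilon'_q$ and length $l-k+1$ centered at $f^ky_j$, and with the role of $A^k$ played by the set of points at which $l-k$ is an $(\alpha,\epsilon)$-geometric time of the restarted curve for $f$. The decisive point is that for $y\in S_j$ one has $l\in E(y)$, hence $l$ is an $(\alpha,\epsilon)$-geometric time of $y$ for $f$: writing $\gamma_l=\sigma\circ\theta_l$ for the associated strongly $(l,\epsilon)$-bounded reparametrization, the curve $f^k\circ\gamma_l$ is strongly $(l-k,\epsilon)$-bounded, passes through $f^ky$, and satisfies $\|d(f^{l-k}\circ(f^k\circ\gamma_l))(0)\|=\|d(f^l\circ\gamma_l)(0)\|\ge\frac{3}{2}\alpha\epsilon$; since $(\gamma_l)_*$ and $f^{-k}\!\big(D_k(y_j)\cap B(f^kx,\epsilon'_q)\big)$ are nested sub-arcs of the simple curve $\sigma_*$ through $y$, this transfers (using the near $\epsilon$-independence of the geometric-time notion recorded just after its definition) to the reparametrizing curves actually used, so that $f^k(S_j)$ does lie in the relevant "$A^k$''. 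This produces, for each $j$, a finite family $D_l(y_i)$, $i\in I_j$, whose images cover $f^l(S_j)$, with the arcs $D_l(y_i)\cap B(f^lx,\epsilon'_q)$ equal or disjoint (the cone estimate once more; keep one $y_i\in S_j$ per arc), and with
$$\sharp I_j\ \le\ B_q\,C_r^{(l-k)/q}\,e^{\frac{\omega_q^{l-k}(F^k\hat x)}{r-1}}\ \le\ B_q\,e^{\delta(l-k)/3}\,e^{\frac{\omega_q^{l-k}(F^k\hat x)}{r-1}},$$
where $C_r^{1/q}<e^{\delta/3}$, and the constant from Lemma \ref{tech} together with the $e^{O(\delta(l-k))}$ factor coming from replacing the orbit of $y_j$ by that of $x$ inside $\omega_q$ are absorbed into $B_q$.

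Then, setting $I:=\coprod_{j\in J}I_j$ relabelled so that $j\in I_j$, and taking $(y_i)_{i\in I}$ to be the representatives chosen above, the first bullet holds because $S=\coprod_jS_j$, the second by the disjointness obtained at time $l$, the third by construction (all $y_i$ with $i\in I_j$ share the arc $D_k(y_i)\cap B(f^kx,\epsilon'_q)=D_k(y_j)\cap B(f^kx,\epsilon'_q)$), and the fourth is the displayed estimate. The main obstacle I foresee is making the "restart at time $k$'' rigorous — matching the bounded-curve and geometric-time structure of $\sigma$ at times $\ge k$ with that of the restarted curve (which is not strongly bounded), and checking that the geometric constants $\alpha$, $\epsilon'_q$ and the constant $B_q$ can be chosen independently of $k$, $l$, $x$, so that they depend only on $q$ as required.
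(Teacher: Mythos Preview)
Your two-step plan --- first apply Lemma~\ref{first} at time $k$ to obtain the classes $S_j$ and the curves $D_k(y_j)$, then apply Lemma~\ref{second} on each $D_k(y_j)$ for the dynamics between times $k$ and $l$ --- is exactly the paper's proof. The paper simply writes: apply Lemma~\ref{first} to $\sigma$ with $N=A^{k,l}\cap B_{\G k,l\R}$, then apply Lemma~\ref{second} to each $D_k(y_j)$ with $N=f^k(B_{\G k,l\R}\cap A^k\cap\sigma_*)$, and set $y_i=f^{-k}z_i$.

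Your worry in the last paragraph is largely unfounded, and the extra use of Lemma~\ref{tech} is unnecessary. The restarted curve $D_k(y_j)$ \emph{is} strongly $\epsilon$-bounded: by definition of a $(\alpha,\epsilon)$-geometric time, $D_k(y_j)$ is the image of $f^k\circ\gamma_k$ with $\gamma_k$ strongly $(k,\epsilon)$-bounded, hence $f^k\circ\gamma_k$ is itself strongly $\epsilon$-bounded. So Lemma~\ref{second} applies to it directly (the passage from $\epsilon$ to $\epsilon'_q$ required by Lemma~\ref{annens} costs only a bounded factor absorbed in $B_q$). Likewise, your transfer of the geometric-time property is the right mechanism but simpler than you fear: since $l\in E(y)$ gives a strongly $(l,\epsilon)$-bounded $\gamma_l$ through $y$, the curve $f^k\circ\gamma_l$ is strongly $(l-k,\epsilon)$-bounded through $f^ky$ with $\|d(f^l\circ\gamma_l)(0)\|\ge\frac{3}{2}\alpha\epsilon$, which is precisely what Lemma~\ref{second} needs to produce $D_{l-k}(f^ky)$ with semi-length $\ge\alpha\epsilon$ crossing $B(f^lx,\epsilon'_q)$; no matching of tree structures is required, only this direct geometric fact, and all constants are uniform in $k,l,x$.
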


\begin{proof}We first apply Lemma \ref{first} to $\sigma$ and $N=A^{k,l}\cap B_{\G k,l\R}$ to get the collection of 
strongly $\epsilon$-bounded curve $\left(D_k(y_j)\right)_{j\in J}$.   Then we apply Lemma \ref{second} to each $ D_k(y_j)$ for $j\in J$ and $N=f^{k}(B_{\G k,l\R}\cap A_k\cap \sigma_*)$ to get a family $(z_i)_{i\in I_j}$ of  $ D_k(y_j)\cap A^{l-k}\cap f^{k}( B_{\G k,l\R}\cap A_k\cap \sigma_*)$  satisfying:\begin{figure}[!ht]
\includegraphics[scale=0.4]{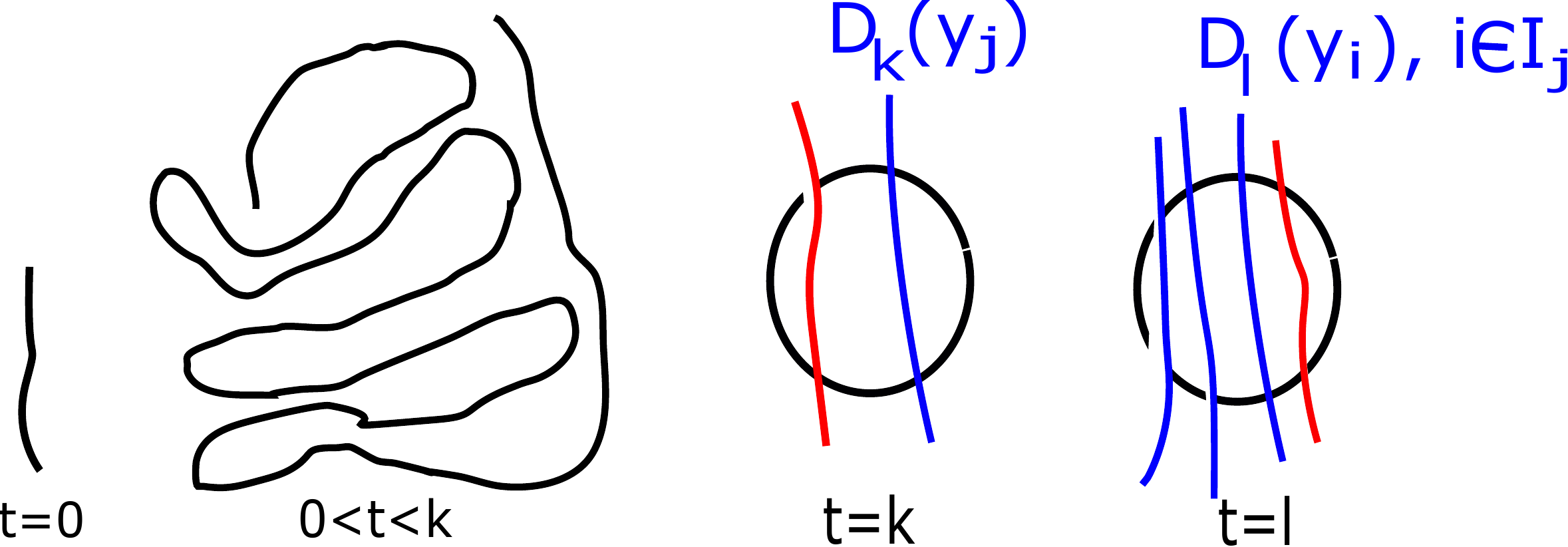}
%\captionsetup{width=0.8\textwidth}
\centering
\caption{\label{bl}\textbf{} For $0\leq t<k$ the image of $f^t\circ \sigma$ in black may be large  and the disks $D_t(y_i)$ are scattered  through the surface. For $t=k$, 
the sets $D_k(y_j)$ for $j\in J$ are covering $(f^t\circ \sigma)_*\cap B_k$. 
For $t=l$, we drew in blue the sets $D_l(y_i)\subset f^{l-k}D_k(y_j)$ for $i\in I_j$.}
\end{figure}  

\begin{itemize}
\item $f^{l-k}\left(D_k(y_j)\cap A^{l-k}\cap f^{k}(B_{\G k,l\R}\cap A_k\cap \sigma_*)\right)\subset \bigcup_{j\in J}D_k(z_i)$, 
\item $B_l\cap D_{l-k}(z_i)$, $i\in I_j$, are pairwise disjoint, 
\item $\sharp I_j \leq B_q e^{\delta (l-k)/3}e^{\frac{\omega_q^{l-k}(F^k\hat x)}{r-1}}$.
\end{itemize}

For all $j\in J$ we can take $j\in I_j$ and $z_j=f^k(y_j)$. 
We conclude the proof by letting $y_i=f^{-k}z_i\in \sigma_*\cap A^{k,l}\cap B_{\G k,l\R}$ for all $i\in I:=\coprod_{j\in J}I_j$.

\end{proof}

We prove now $(H)$.  Recall that $\lambda=\lambda_\sigma$ is the push-forward on $\mathbb PTM$ of the Lebesgue measure on $\sigma_*$. As $\sharp \partial F_n=o(n)$ it  is enough to show  there is a constant $C$ such that for any strongly $\epsilon$-bounded curve $\sigma$ we have 
\begin{equation}\label{fdf}\lambda_\sigma\left(P^{F_n}(\hat x)\cap \pi^{-1}A^{\partial F_n}\right)\leq C^{\sharp \partial F_n}e^{2\delta \sharp F_n/3}e^{-\psi_q^{F_n}(\hat x)}.
\end{equation}
To prove (\ref{fdf}) we argue by induction on the number of connected components of $F_n$.   Let $\G k,l\R$,  $0\leq k\leq l$, be the first  connected component of $F_n$ and write $G_{n-l}= \N^*\cap (F_n-l)$. 
Then with the notations of Lemma \ref{third} we get 
\begin{align*}
\lambda_{\sigma}\left(P^{F_n}(\hat x)\cap \pi^{-1}A^{\partial F_n}\right)&\leq  \lambda_\sigma\left( \coprod_{i\in I}F^{-l}\left(\pi^{-1}A^{\partial G_{n-l}}\cap P^{G_{n-l}}(F^l\hat x)\cap D_l(y_i)\right)\right), \\
&\leq \lambda_{\sigma}\left( \coprod_{j\in J} F^{-k} \left(\coprod_{i\in I_j}F^{-(l-k)}\left(\pi^{-1}A^{\partial G_{n-l}}\cap P^{G_{n-l}}(F^l\hat x)\cap D_l(y_i)\right)\right)\right)
\end{align*}

For $j\in J$ we let  $\sigma_j^k$ be the strongly $\epsilon$-bounded curve $\sigma$ given by $D_k(y_j)$. By the bounded distorsion property (\ref{distor}) we get 
\begin{align*}
\lambda_{\sigma}\left(P^{F_n}(\hat x)\cap \pi^{-1} A^{\partial F_n}\right)&\leq 3\sum_{j\in  J}e^{-\phi_k(\hat y_j)}\lambda_{\sigma_j^k}\left(\coprod_{i\in I_j}F^{-(l-k)}\left(\pi^{-1} A^{\partial G_{n-l}}\cap P^{G_{n-l}}(F^l\hat x)\cap D_l(y_i)\right)\right).
\end{align*}
By using again the bounded distorsion property (now between the times $k$ and $l$) we get 
with $\sigma_i^l$ being the curve associated to $D_l(y_i)$ :
\begin{align*}
\lambda_{\sigma}\left(P^{F_n}(\hat x)\cap \pi^{-1}A^{\partial F_n}\right)\leq & 9\sum_{j\in  J}e^{-\phi_k(\hat y_j)}\sum_{i\in I_j} e^{-\phi_{l-k}(F^k\hat y_i)} \lambda_{\sigma_i^l}\left(\pi^{-1}A^{\partial G_{n-l}}\cap P^{G_{n-l}}(F^l\hat x)\right).
\end{align*}

We may assume that  any $\hat y_i$, $i\in I$, lies in $P^{F_n}(\hat x)$. In particular we have  $|\phi_{l-k}(F^k \hat y_i)-\phi_{l-k}(F^k\hat x)|<(l-k)\delta/3$ by (\ref{ire}). Then 
\begin{align*}
\lambda_{\sigma}\left(P^{F_n}(\hat x)\cap \pi^{-1}A^{\partial F_n}\right)\leq &9\left(\sum_{j\in  J}e^{-\phi_k(\hat y_j)}\right) e^{\delta(l-k)/3} e^{-\phi_{l-k}(F^k\hat x)}\sup_j \sharp I_j \\
&\times \sup_{i\in I}\lambda_{\sigma_i^l}\left(\pi^{-1}A^{\partial G_{n-l}}\cap P^{G_{n-l}}(F^l\hat x)\right).
\end{align*}

By (\ref{mar}) and the last item of Lemma \ref{third} we obtain 
\begin{align*}
\lambda_{\sigma}\left(P^{F_n}(\hat x)\cap \pi^{-1}\partial A^{F_n}\right)&\leq \frac{50B_qe^{2\delta (l-k)/3}}{\alpha}e^{-\phi_{l-k}(F^k\hat x)+\frac{\omega_q^{l-k}(F^k\hat x)}{r-1}}\sup_{i\in I}\lambda_{\sigma_i^l}\left(\pi^{-1}A^{\partial G_{n-l}}\cap P^{G_{n-l}}(F^l\hat x)\right), \\
& \leq \frac{50B_qe^{2\delta (l-k)/3}}{\alpha}e^{-\psi_q^{\G k,l\R}(\hat x)}\sup_{i\in I}\lambda_{\sigma_i^l}\left(\pi^{-1}A^{\partial G_{n-l}}\cap P^{G_{n-l}}(F^l\hat x)\right).
\end{align*}

By induction hypothesis (\ref{fdf}) applied to $G_{n-l}$ for each $\sigma_i^l$,  we have for all $i\in I$:
$$\lambda_{\sigma_i^l}\left(\pi^{-1}A^{\partial G_{n-l}}\cap P^{G_{n-l}}(F^l\hat x)\right)\leq C^{\sharp \partial G_{n-l}} e^{2\delta\sharp \partial G_{n-l}/3}e^{-\psi_q^{G_{n-l}}(F^l\hat x)}.$$
Note that $\sharp \partial F_n=\sharp \partial G_{n-l}+2$. We conclude by taking $C=\sqrt{ \frac{50B_q}{\alpha}}$  that 
\begin{align*}
\lambda_{\sigma}\left(P^{F_n}(\hat x)\cap \pi^{-1}A^{\partial F_n}\right)&\leq \frac{50B_qe^{2\delta \sharp F_n /3}}{\alpha}C^{\sharp \partial G_{n-l}}e^{-\psi_q^{F_n}(\hat x)},\\
&\leq C^{\sharp \partial F_n} e^{2\delta  \sharp F_n/3}e^{-\psi_q^{F_n}(\hat x)}.
\end{align*}
This completes the proof of (\ref{fred}).

\section{End of the proof of the Main Theorem}
\subsection{The covering property of the basins} 
For $x\in M$  the stable/unstable manifold $W^{s/u}(x)$ at $x$ are defined as follows :
$$W^s(x):=\{y\in M, \ \limsup_{n\rightarrow +\infty}\frac{1}{n}\log d(f^n x,f^n y)<0\},$$
$$W^u(x):=\{y\in M, \ \limsup_{n\rightarrow +\infty}\frac{1}{n}\log d(f^{-n} x,f^{-n} y)<0\}.$$

For a subset $\Gamma$ of $M$ we let $W^s(\Gamma)=\bigcup_{x\in \Gamma}W^s(x)$.
According to Pesin's theory, there are a nondecreasing sequence of  compact, a priori non-invariant, sets $(K_n)_n$ (called the Pesin blocks) with $\mathcal R^*=\bigcup_n K_n$ and two families of embedded $C^\infty$ discs $(W^s_{loc}(x))_{x\in K}$ and $(W^u_{loc}(x))_{x\in K}$ (called the local stable and unstable manifolds) such that :
\begin{itemize}
\item  $W^{s/u}_{loc}(x)$ are tangent to $E_{s/u}$ at $x$,
\item the splitting $E_u(x)\oplus E_s(x)$ and the discs $W^{s/u}_{loc}(x)$ are continuous on $x\in K_n$ for each $n$.
\end{itemize}

For $\gamma>0$ and $x\in K$ we let $W^{s/u}_\gamma(x)$ be the connected component of $B(x,\gamma)\cap W^{s/u}_{loc}(x)$ containing $x$.

\begin{prop}\label{bass}
The set $\left\{\chi>\frac{R(f)}{r}\right\}$ is covered by the basins of ergodic SRB measures $\mu_i$, $i\in I$, up to a set of zero Lebesgue measure. 
\end{prop}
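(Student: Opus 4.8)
The plan is to argue by contradiction, following the classical scheme: a hyperbolic ergodic SRB measure is physical because its basin is saturated (up to a $\Leb$-null set) along the Pesin stable manifolds of a positive-measure subset of a Pesin block, and one shows that if a positive-Lebesgue-measure part of $\{\chi>\frac{R(f)}{r}\}$ escaped all these basins, it would itself be covered, via absolute continuity, by such basins. First I would fix the family: by the spectral decomposition of $C^r$ surface diffeomorphisms \cite{BCS} there are finitely many ergodic SRB measures with $\chi_1\ge b$ for each $b>\frac{R(f)}{r}$, so those with $\chi_1>\frac{R(f)}{r}$ form a countable family $(\mu_i)_{i\in I}$; each is hyperbolic, $\chi_1(\mu_i)>0>\chi_2(\mu_i)$, as in Section \ref{building}. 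I would then record the standard Pesin input: nondecreasing Pesin blocks $(K_n)_n$ with $\mathcal R^*=\bigcup_nK_n$, continuous families $W^{s/u}_{loc}$ over each $K_n$, and absolute continuity of the stable (and unstable) holonomies; together with the fact that for each $i$ and $\mu_i$-a.e.\ $z$, $\Leb_{W^u_{loc}(z)}$-a.e.\ point of $W^u_{loc}(z)$ is generic for $\mu_i$ (absolute continuity of the unstable conditionals of the SRB measure $\mu_i$ plus Birkhoff), whence, by invariance of basins and absolute continuity of the stable foliation, $\Leb(\mathcal B(\mu_i))>0$.

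Suppose the statement fails: the set $Z$ of points of $\{\chi>\frac{R(f)}{r}\}$ lying in no $\mathcal B(\mu_i)$ has $\Leb(Z)>0$. Since the $\mathcal B(\mu_i)$ are fully invariant, $Z$ is invariant mod $0$; pick $b>\frac{R(f)}{r}$ with $\Leb(Z\cap\{\chi>b\})>0$ and run the construction of Section \ref{building} on $Z\cap\{\chi>b\}$. By the Fubini argument of \cite{bur} this produces a $C^r$, $\epsilon$-bounded, embedded curve $\sigma$ and a subset $A\subset\sigma_*\cap Z$ with $\Leb_{\sigma_*}(A)>0$ and $\chi(x,v_x)>b$ for all $x\in A$ (avoiding periodic sources), together with $\mathfrak n\in\mathfrak N$, the sets $(A_n)_{n\in\mathfrak n}$, the F\"olner sequence $\mathcal F=(F_n)_{n\in\mathfrak n}$, and a weak-$*$ limit $\mu$ of $(\mu_n^{F_n})_{n\in\mathfrak n}$ whose projection $\nu=\pi\mu$ is an SRB measure with hyperbolic ergodic components $\xi$, each an SRB measure.

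The heart of the matter is to transfer genericity from $\nu$ back to $A$. Fix an ergodic component $\xi$ of $\nu$ of positive $\nu$-weight and a Pesin block $K$ with $\xi(K)>0$, so that $\Leb_{W^u_{loc}(z)}$-a.e.\ point of $W^u_{loc}(z)$ is $\xi$-generic for $z\in K$. Because $\mu$ is a limit of averages of forward pushforwards of $\Leb_{\sigma_*}|_{A_n}$ and $\nu$ charges every neighbourhood of $K$, for a set of times $k\in F_n$ of positive $d_n$-density and a subset of $x\in A_n$ of positive $\Leb_{\sigma_*}$-measure the point $f^kx$ lies on the bounded curve $D_k(x)$ — a subcurve of $f^k\sigma$ of semi-length $\ge\alpha\epsilon$, of bounded curvature and with bounded distortion of $df^{-k}$ by (\ref{distor}) — which crosses a fixed neighbourhood of $K$ along a subcurve $C^1$-close to $W^u_{loc}(z)$ for some $z\in K$. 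Sliding along the Pesin stable manifolds, the stable holonomy from $W^u_{loc}(z)$ onto $D_k(x)$ is absolutely continuous, so $\Leb_{D_k(x)}$-a.e.\ point of that subcurve is again $\xi$-generic, hence lies in $\mathcal B(\xi)$; pulling back by $f^{-k}$ with bounded distortion, $\Leb_{\sigma_*}$-a.e.\ point of the corresponding subcurve of $\sigma$ lies in $\mathcal B(\xi)$. Letting $(k,x)$ range over all such good pairs, and over all ergodic components of $\nu$, one covers a subset $A'\subset A$ with $\Leb_{\sigma_*}(A')>0$ by basins of ergodic SRB measures. Finally, since each $y\in A'\subset A$ has $\chi(y,v_y)>b$ and $\chi(y,v_y)$ coincides with the corresponding Lyapunov exponent of the relevant component $\xi$ along the orbit of $y$ (after pulling back the unstable-manifold structure), one gets $\chi_1(\xi)>b>\frac{R(f)}{r}$, so $\xi$ is one of the $\mu_i$; this contradicts $A'\subset Z$.

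I expect the main obstacle to be this transfer step: one must verify that a definite proportion of the orbit pieces $f^kA$ genuinely accumulate on local unstable manifolds of $\nu$ over a Pesin block with enough uniform geometric control — definite length, bounded curvature, $C^1$-closeness to a leaf — for the absolute continuity of the Pesin stable foliation to be legitimately invoked to transport Lebesgue-null sets, and that this happens for a set of $(k,x)$ that is simultaneously of positive density in $k\in F_n$ and of positive $\Leb_{\sigma_*}$-measure in $x$; here the bounded-curve and geometric-time technology of Section 4, the distortion estimate (\ref{distor}) and the F\"olner Gibbs structure of the construction all come into play. A secondary point is to make rigorous that the ergodic components of the measure $\nu$ produced from $Z\cap\{\chi>b\}$ have Lyapunov exponent larger than $\frac{R(f)}{r}$, so that they indeed belong to the family $(\mu_i)_{i\in I}$.
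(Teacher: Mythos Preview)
Your overall strategy coincides with the paper's: argue by contradiction, run the construction of Section~\ref{building} starting from the ``bad'' set, obtain an SRB measure $\nu$, pick a Pesin block of large $\nu$-measure, and use absolute continuity of the Pesin stable holonomy between a local unstable leaf and an image curve $D_k(x)$ to carry generic points back into the curve. This is exactly what the paper does. However, there is a genuine gap in your transfer step, and it is not the one you flag.

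The step that fails is the passage from ``$\Leb_{\sigma_*}$-a.e.\ point of the corresponding subcurve $H_k(x)\subset\sigma_*$ lies in $\mathcal B(\xi)$'' to ``one covers a subset $A'\subset A$ with $\Leb_{\sigma_*}(A')>0$ by basins''. The subcurve $H_k(x)$ is just a piece of $\sigma_*$; a priori $A\cap H_k(x)$ can be a $\Leb_{\sigma_*}$-null set (it always contains $x$, but $x$ may fall in the exceptional null set where genericity fails). Ranging over many pairs $(k,x)$ does not help: you only learn that a large open piece of $\sigma_*$ is a.e.\ covered by basins, not that $A$ itself meets the basins in positive measure. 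The paper closes this gap by a Lebesgue density argument that you omit: before building $(A_n)_n$ and $(F_n)_n$, it restricts to the set of $\Leb_{\sigma_*}$-density points of the bad set, and then to a subset $A$ on which $\Leb_{\sigma_*}(H_k(x)\cap B)/\Leb_{\sigma_*}(H_k(x))\to 1$ \emph{uniformly}. Pushed forward with bounded distortion~(\ref{distor}), this gives $\Leb_{D_n(x^n)}(D_n(x^n)\setminus f^nB)\to 0$, while the stable holonomy yields $\Leb_{D_n(x^n)}(W^s(\Gamma\cap P))\ge c'>0$; for $n$ large these two are incompatible with $B\cap W^s(\Gamma)=\emptyset$. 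Without this density-point step your argument does not conclude.

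Two secondary remarks. First, to guarantee that the image curve $D_k(x)$ has the required uniform length and geometry you need $k\in E(x)$; the paper secures such times by introducing the auxiliary measures $\zeta_n=\int \frac{1}{\sharp F_n}\sum_{k\in E(x)\cap F_n}\delta_{F^k\hat x}\,d\mu_n$, using the last item of Lemma~\ref{measurable} to get $\zeta(\mathbb PTM)\ge\beta$, and then comparing with a Pesin block of $\nu$-measure $>1-\beta$. Your sketch gestures at this but does not supply it. Second, your worry about proving $\chi_1(\xi)>\frac{R(f)}{r}$ is unnecessary here: in the paper, the family $(\mu_i)_{i\in I}$ is taken large enough that every ergodic component of any $\nu$ produced by Section~\ref{building} is already among the $\mu_i$, so $\nu(\Gamma)=1$ is automatic and the contradiction is simply $B\cap W^s(\Gamma)\neq\emptyset$. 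The refinement to $\chi(\mu_i)>\frac{R(f)}{r}$ is obtained afterwards via Lemma~\ref{fini}, not inside the proof of Proposition~\ref{bass}.
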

 
In fact we prove a  stronger statement by showing that $\left\{\chi>\frac{R(f)}{r}\right\}$  is contained Lebesgue a.e. in $W^s(\Gamma)$ where  $\Gamma$ is any $f$-invariant subset of $\bigcup_{i\in I}\mathcal B(\mu_i)_{i\in I}$ with $\mu_i(\Gamma)=1$ for all $i$. \\

So far we only have used the characterization of SRB measure in terms of entropy (Theorem \ref{leddd}). In the proof of Proposition \ref{bass} we will use the absolutely continuity property of SRB measures. Let $\mu$ be a Borel measure on $M$. We recall a measurable partition $\xi$ in the sense of Rokhlin \cite{rok} is said $\mu$-subordinate to $W^u$ when $\xi(x)\subset W^u(x)$ and $\xi(x)$ contains an open neighborhood of $x$ in the topology of $W^u(x)$  for 
$\mu$-almost every $x$. The measure $\mu$ is said to have \textbf{absolutely continuous conditional measures on unstable manifolds} if for every measurable partition $\xi$ $\mu$-subordinate to $W^u$, the conditional measures $\mu_x^\xi$  of $\mu$ with respect to $\xi$ satisfy   $\mu_x^\xi \ll  Leb_{W^u(x)}$ for $\mu$-almost every  $x$.

\begin{proof}
We argue by contradiction. Take $\Gamma$ as above. Assume there is a Borel set $B$ with positive Lebesgue measure contained in the complement of $W^s(\Gamma)$ such that we have  $\chi(x)>b>\frac{R(f)}{r}$  for all $x\in B$. Then we follow the approach of Section \ref{building}. We  consider  a $C^r$ smooth disc $\sigma$ with $\chi(x,v_x)>b$ for $x\in B'\subset B$, $\Leb_{\sigma_*}(B')>0$. 
One can then  define the geometric set $E$ on a subset $B''$ of $B$ with $\Leb_{\sigma_*}(B'')>0$.  We also let $\tau$, $\beta$, $\alpha$ and $\epsilon$ be the  parameters associated to $E$. Recall that :
\begin{itemize}
\item  $E$ is $\tau$-large with respect to the derivative cocycle $\Phi$,
\item $\overline{d}(E(x))\geq \beta >0$ for $x\in B''$, 
 \item $D_k(y)=f^k(H_k(y))$ has semi-length larger than $\alpha\epsilon$ when $k\in E(y)$, $y\in B''$.
 \end{itemize}

Let $B'''$ be the subset of $B''$ given by density points of $B''$ with respect to $\Leb_{\sigma_*}$. In particular, we have  
$$\forall x\in B''',  \ \ \frac{\textrm{Leb}_{\sigma_*}\left( H_k(x)\cap B''\right) }{\textrm{Leb}_{\sigma_*}(H_k(x))}\xrightarrow{k\rightarrow +\infty}1.$$

We choose  a subset $A$  of $B'''$  with $\Leb_{\sigma_*}(A)>0$ such that the above convergence is uniform in $x\in A$.  Then from this set $A$ and  the geometric set $E$ on $A$  we may  build $\mathfrak n$,  $(F_n)_{n\in \mathfrak n}$ and $(\mu_n^{F_n})_{n\in \mathfrak n}$  as in Sections \ref{drei}  and \ref{zwei}.  
   As proved in Section \ref{building} any limit measure $\mu$ of $\mu_n^{F_n}$ is supported on the unstable bundle and projects  to a SRB measure $\nu$  with $\chi_1(x)\geq\tau>0>-\tau\geq \chi_2(x)$ for $\nu$ a.e. $x$.  The measure  $\nu$ is a barycenter of   ergodic SRB measures with such  exponents.  Take $P=K_N$ a Pesin block with $\nu(P)\sim 1>1-\beta$. We let $\theta$ and $l$ be respectively  the minimal angle between $E_u$ and $E_s$  and   the minimal  length of the  local stable and unstable manifolds  on $P$.    %Without loss of generality we may assume $\epsilon<\min (\theta,l)/100$. Then  we let $\gamma<\alpha\epsilon$ such that for all $y,z\in P$ with $d(y,z)<\gamma$ the local stable manifold   $W_\gamma^{s}(z)$ lies in  a cone around $E_s(y) $ of width less than $\frac{\theta}{100}$. 

Let $\xi$ be a measurable partition subordinate to $W^u$ with diameter less then $\gamma>0$. We have $\nu(\Gamma\cap P)=\int \nu_x^\xi(\Gamma\cap P)\, d\nu(x)\sim 1$ and $\nu_x^\xi \ll  Leb_{W_\gamma^u(x)}$ for $\nu$ a.e. $x$. Therefore we get   for some $c>0$ 
$$\nu\left(x, \ \textrm{Leb}_{W_\gamma^u(x)}(\Gamma\cap P)>c\right)\sim 1 .$$

We let $F=\{x\in \Gamma\cap P, \ \Leb_{W_\gamma^u(x)}(\Gamma\cap P)>c\}$. Observe that we have again $\nu(F)\sim 1$.  For $x\in \sigma_*$ and $y\in P$ we use the following notations :
$$\hat x_\sigma=(x,v_x)\in \mathbb PT\sigma_* \ \ \ \ \ \hat y_u=(y,v_y^u)\in \mathbb PTM,$$ where $v_y^u$ is the element of $\mathbb PTM$ representing the line $E_u(y)$. Let $\hat F_u^\gamma$ be the open $\gamma/8$-neighborhood of $\hat F_u:=\{\hat y_u, \ y \in F \}$ in $\mathbb PTM$. Recall $E(x)$ denotes the set of geometric times of $x$.  We let for $n\in \mathfrak n$: $$\zeta_n:=\int \frac{1}{\sharp F_n }\sum_{k\in  E(x)\cap F_n}\delta_{F^k \hat x_\sigma } \, d\mu_n(\hat x_\sigma ).$$ Observe that $
\zeta_n(\mathbb PTM)\geq \inf_{x\in A_n} d_n(E(x)\cap F_n) $. By the last item in Lemma \ref{measurable}, we have $\liminf_{n\in \mathfrak n}\inf_{ x\in A_n} d_n(E(x)\cap F_n)\geq\beta$. Therefore there is a weak limit $\zeta=\lim_k\zeta_{p_k}$ with $\zeta \leq \mu$ and $\zeta(\mathbb PTM) \geq \beta$. From $\mu(\hat F_u^\gamma)\sim 1>1-\beta$ we get $0<\zeta(\hat F_u^\gamma)\leq \lim_k \zeta_{p_k}(\hat F_u^\gamma)$. Note also $\hat A_\sigma:=\{\hat y_\sigma, \ y\in A \} $ has full $\mu_n$-measure for all $n$. In particular,  for infinitely many $n\in \mathbb N$ there is $(x^n,v_{x^n})=\hat x^n_\sigma 
\in \hat A_\sigma $ with $F^n\hat x^n_\sigma\in \hat F_u^{\gamma}$ and $n\in E(x^n)$.  Let $\hat y^n_u=(y^n,v^u_{y^n})\in \hat F_u$ which is $\gamma/
8$-close to $F^n\hat x^n_\sigma$.  Then for $\gamma\ll \delta\ll \min(\theta, l)$ independent of $n$, the curve $D_n^\delta(x^n):=D_n(x^n)\cap B(f^nx^n,\delta)$ is transverse to $W^s(P\cap \Gamma\cap W^u_{\gamma}
(y^n))$ and  may be written as the graph of a $C^r$ smooth function $\psi:E\subset E_u(y^n)\rightarrow E_s(y^n)$ with $\|d\psi\|<L$ for a universal constant $L$.

\begin{figure}[!ht]
\includegraphics[scale=0.4]{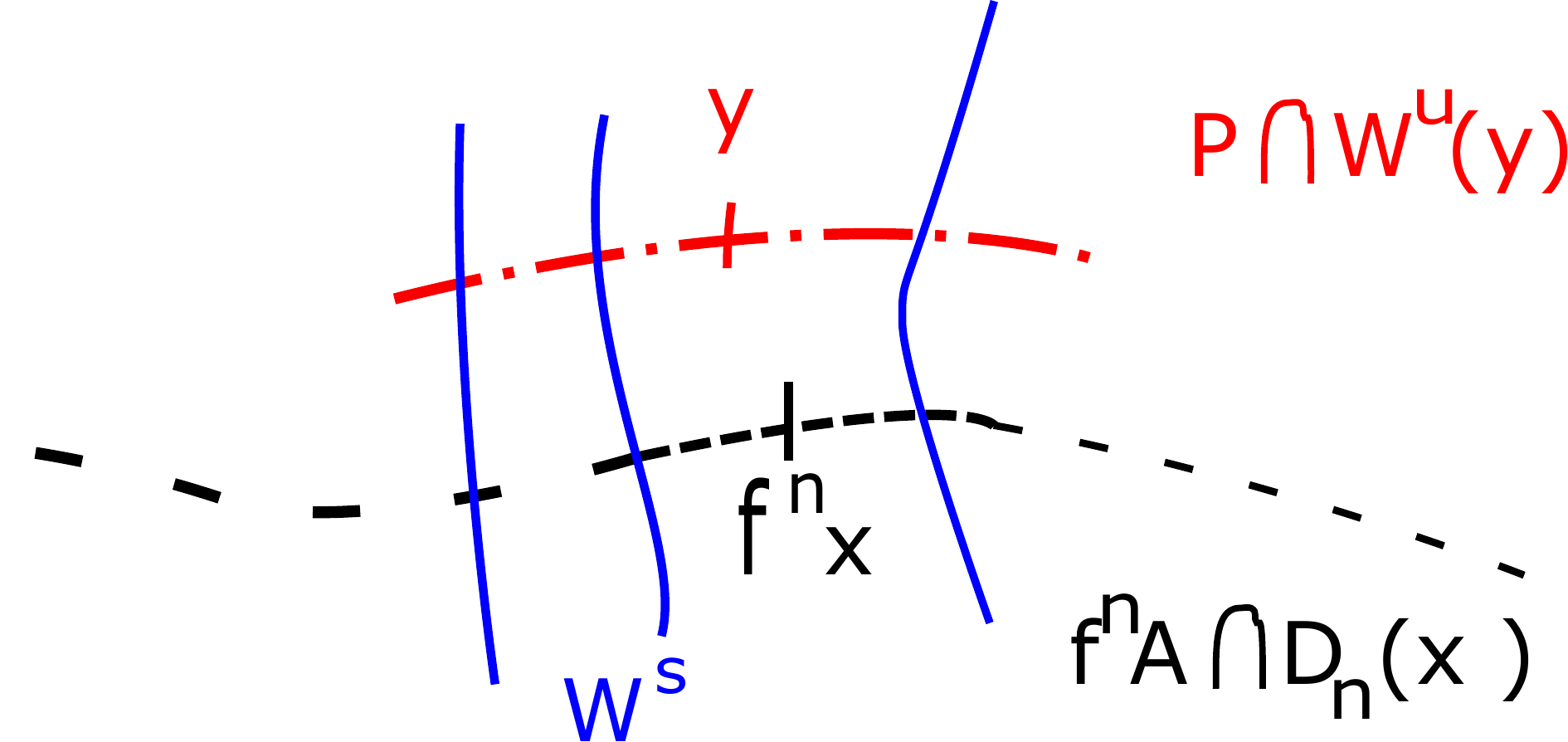}
%\captionsetup{width=0.8\textwidth}
\centering
\caption{\label{bl}\textbf{Holonomy of the local stable foliation between the transversals $D_n(x)$ and $W^u_\gamma(y)$.} }
\end{figure}

By Theorem 8.6.1 in \cite{pes} the  associated holonomy map  $h: W^{u}_\gamma(y)\rightarrow D_n^\delta(x^n)$ is absolutely continuous and its Jacobian is  bounded from below by a positive constant  depending only on the Pesin block $P=K_N$ (not on $x^n$ and $y^n$). In particular $\Leb_{D_n(x^n)}\left(W^s(\Gamma \cap P) \right)\geq c'$ for some constant $c'$ independent of $n$. 
The distorsion of $df^n$ on $H_n(x^n)$ being  bounded by $3$, 
we get (recall $f^nH_n(x^n)=D_n(x^n)$):
$$ (\alpha\epsilon)^{-1}\Leb\left(D_n(x^n)\setminus f^n B\right)\leq \frac{\Leb\left(D_n(x^n)\setminus f^n B\right)}{\Leb\left(D_n(x^n)\right)}\leq 9 \frac{\Leb\left(H_n(x^n)\setminus B \right) }{\Leb\left(H_n(x^n)\right)} \xrightarrow{n\rightarrow \infty}0.$$

 Therefore for $n$ large enough, there exists $x\in f^{n}B\cap W^s(\Gamma \cap P)$, in particular 
 $B\cap  f^{-n} W^s(\Gamma)=B\cap  W^s(\Gamma)\neq \emptyset$. This contradicts the definition of $B$.

\end{proof}

\subsection{The maximal exponent}

Let $\mathcal R^{+*}$ denote the invariant subset of Lyapunov regular points $x$ of $(M,f)$ with $\chi_1(x)>0>\chi_2(x)$. Such a  point admits  so called regular neighborhoods (or $\epsilon$-Pesin charts):

\begin{lem}\cite{ppes}\label{chart}
For a fixed $\epsilon>0$ there exists a measurable function $q=q_\epsilon:\mathcal R^{+*}\rightarrow (0,1]$ with $e^{-\epsilon}<q(fx)/q(x)<e^{\epsilon}$ and a collection of embeddings $\Psi_x:B(0,q(x))\subset T_xM=E_u(x)\oplus E_s(x)\sim \mathbb R^2\rightarrow M$ with $\Psi_x(0)=x$  such that $f_x=\Psi_{fx}^{-1}\circ f\circ \Psi_x$ satisfies the following properties :
\begin{itemize}
\item  \begin{equation*}d_0f_x =
\begin{pmatrix} 
a_\epsilon^1(x) &0\\
0& a_\epsilon^2(x),
\end{pmatrix}
\end{equation*}
with $e^{-\epsilon}e^{\chi_i(x)}<a_\epsilon^i(x) <e^{\epsilon}e^{\chi_i(x)}$  for $i=1,2$,
\item the $C^1$ distance between $f_x$ and $d_0f_x$ is less than $\epsilon$, 
\item there exists a constant $K$ and a measurable function $A=A_\epsilon:\mathcal R^{+*}\rightarrow \mathbb R$ such that for all $y,z\in B(0,q(x))$ 
$$Kd(\Psi_x(y),\Psi_x(z))\leq \|y-z\|\leq A(x)d(\Psi_x(y), \Psi_x(z)),$$
with $e^{-\epsilon}A(fx)/A(x)<e^{\epsilon}$.
\end{itemize}
\end{lem}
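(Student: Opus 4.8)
The plan is to carry out the classical construction of $\epsilon$-regular (Lyapunov--Pesin) charts; I will only describe the skeleton, as every step reduces to a routine estimate once the appropriate metric is introduced (compare \cite{pes,ppes}). Fix $\epsilon>0$, reserving the right to shrink it by a fixed factor at the end. For $x\in\mathcal R^{+*}$ one first records from Oseledec's theorem \cite{Os} the invariant measurable splitting $T_xM=E_u(x)\oplus E_s(x)$ together with the uniform-on-the-unit-sphere convergence $\tfrac1n\log\|d_xf^nv\|\to\chi_i(x)$ on $E_i(x)$, $i=1,2$.

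First I would introduce the Lyapunov inner product $\langle\cdot,\cdot\rangle'_x$: on $E_u(x)$ and on $E_s(x)$ set $\|w\|'_x:=\bigl(\sum_{n\in\mathbb Z}e^{-2n\chi_i(x)}e^{-2\epsilon|n|}\|d_xf^nw\|^2\bigr)^{1/2}$, which converges by the uniform Oseledec limits, and extend it to $T_xM$ by declaring $E_u(x)\perp E_s(x)$. A one-line reindexing $n\mapsto n+1$ (using $\chi_i(fx)=\chi_i(x)$ and $|n-1|=|n|\pm1$) then yields the almost-multiplicativity $e^{\chi_i(x)-\epsilon}\le\|d_xf\,w\|'_{fx}/\|w\|'_x\le e^{\chi_i(x)+\epsilon}$ for $w\in E_i(x)$; the $n=0$ term gives $\|w\|\le\|w\|'_x$, and uniform convergence on the unit spheres produces a measurable $A_0:\mathcal R^{+*}\to[1,\infty)$ with $\|w\|'_x\le A_0(x)\|w\|$, which one checks is tempered, i.e.\ $\tfrac1n\log A_0(f^{\pm n}x)\to0$. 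Applying the Tempering Kernel Lemma (\cite{pes}), I would replace $A_0$ by a measurable $A\ge A_0$ with $e^{-\epsilon}<A(fx)/A(x)<e^{\epsilon}$, and put $q(x):=c\,\epsilon\,A(x)^{-3}$ for a suitable uniform constant $c$, so that $e^{-3\epsilon}<q(fx)/q(x)<e^{3\epsilon}$.

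Next I would define $\Psi_x:=\exp_x\circ L_x$ on $B(0,q(x))$, where $L_x:(\mathbb R^2,\mathrm{eucl})\to(T_xM,\langle\cdot,\cdot\rangle'_x)$ is a measurably chosen linear isometry sending the coordinate axes onto $E_u(x)$ and $E_s(x)$; then $\Psi_x(0)=x$, and since $\exp_x$ is uniformly bi-Lipschitz onto small balls (compactness of $M$) while $\|\cdot\|\le\|\cdot\|'_x\le A(x)\|\cdot\|$ forces $A(x)^{-1}\|w\|_{\mathrm{eucl}}\le\|L_xw\|\le\|w\|_{\mathrm{eucl}}$, the comparison $Kd(\Psi_xy,\Psi_xz)\le\|y-z\|\le A(x)\,d(\Psi_xy,\Psi_xz)$ with $K$ uniform follows at once. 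For the normal form: $d_0f_x=L_{fx}^{-1}\circ d_xf\circ L_x$ is block-diagonal because $L_x,L_{fx}$ match the coordinate axes with the invariant subspaces and $d_xf$ preserves the splitting; in dimension two this makes it the diagonal matrix $\mathrm{diag}(a^1_\epsilon(x),a^2_\epsilon(x))$, and since the $L$'s are $\langle\cdot,\cdot\rangle'$-isometries its entries are precisely the Lyapunov operator norms of $d_xf$ on $E_u(x),E_s(x)$, whence $e^{-\epsilon}e^{\chi_i(x)}<a^i_\epsilon(x)<e^{\epsilon}e^{\chi_i(x)}$ by the bound established above. Finally, since $Df$ has a uniform modulus of continuity on the compact $M$, transporting $f$ through the charts and using the distortion bounds above bounds $\|f_x-d_0f_x\|_{C^1(B(0,q(x)))}$ by a quantity that tends to $0$ with $q(x)$ (with a factor polynomial in $A(x)$); the choice of $q$ then makes this $<\epsilon$ and at the same time ensures $f(\Psi_x(B(0,q(x))))\subset\Psi_{fx}(B(0,q(fx)))$, so $f_x$ is well defined there. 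A concluding rescaling $\epsilon\mapsto\epsilon/3$ turns all the $e^{\pm3\epsilon}$ into $e^{\pm\epsilon}$.

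I expect the main obstacle to be the bookkeeping of tempered functions in the middle step: the chart datum $A$ (and with it $q$) must be made $\epsilon$-slowly varying along orbits, and the several independent smallness requirements on $q$ --- large enough charts for $\exp_x$ to be an embedding, for $f_x$ to be defined, and for the $C^1$-closeness to hold --- must all be met by one and the same tempered choice. This is exactly what the Tempering Kernel Lemma delivers; once it is in hand, the rest is a finite list of the explicit estimates indicated above.
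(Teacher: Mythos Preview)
The paper does not prove this lemma: it is quoted verbatim as a known result from Pesin's original work \cite{ppes} (see also \cite{pes}) and is used as a black box in the proof of Lemma~\ref{fini}. Your sketch is a correct outline of the classical construction of $\epsilon$-Pesin charts via the Lyapunov inner product and the Tempering Kernel Lemma, and there is nothing to compare it against in the paper itself.
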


For any $i\in I$ we let 
$$E_i:=\{x, \ \chi(x)=\chi(\mu_i)\}.$$ 
The set $E_i$ has  full $\mu_i$-measure by the subadditive ergodic theorem. Put $\Gamma_i=\mathcal B(\mu_i)\cap E_i\cap \mathcal R^{+*}$ and $\Gamma=\bigcup_{i}\Gamma_i$. Clearly $\Gamma$ is $f$-invariant.  We finally check that 	$\chi(x)=\chi(\mu_i)$ for $x\in W^s(\Gamma_i)$. 

For uniformly hyperbolic systems, we have $$\Sigma \chi(x)=\lim_n\frac{1}{n}\log \Jac(d_xf^n_{E_u})=\lim_n \int \log \Jac(d_yf_{E_u})\, d\delta_x^n.$$ As the geometric potential $y\mapsto \log \Jac d_yf_{E_u}$ is continuous in this case, any point in the basin of a SRB measure $\mu$ satisfies $\Sigma \chi(x)=\int \Sigma \chi(y)\, d\mu(y)$.

 \begin{lem}\label{fini} If $y\in W^s(x)$ with $x\in \mathcal R^{+*}$, then $\chi(y)=\chi(x)$.
\end{lem}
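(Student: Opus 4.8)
The plan is to run the orbit of $y$ through the regular (Pesin) charts of Lemma~\ref{chart} centered along the forward orbit of $x$. Since $\chi$ is a forward upper Lyapunov exponent, replacing $(x,y)$ by $(f^{n_0}x,f^{n_0}y)$ changes neither $\chi(x)$ nor $\chi(y)$, and $\mathcal R^{+*}$ is invariant, so for each small $\epsilon>0$ I first reduce to the situation where $f^{n}y\in\Psi_{f^{n}x}(B(0,q_\epsilon(f^{n}x)))$ for all $n\ge 0$. This reduction is possible because $q_\epsilon(f^{n}x)\ge q_\epsilon(x)e^{-n\epsilon}$, whereas $d(f^{n}x,f^{n}y)\le e^{-n\delta'}$ eventually for some $\delta'>\epsilon$ (as $y\in W^s(x)$), so after enough forward iterates the entire forward orbit of $y$ is trapped inside the regular neighborhoods of the orbit of $x$. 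Then, writing $\tilde y_n:=\Psi_{f^{n}x}^{-1}(f^{n}y)$, one has $\tilde y_{n+1}=f_{f^{n}x}(\tilde y_n)$ and $f^{n}=\Psi_{f^{n}x}\circ(f_{f^{n-1}x}\circ\cdots\circ f_{f^{0}x})\circ\Psi_x^{-1}$ in a neighborhood of $y$.

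The upper bound $\chi(y)\le\chi(x)$ is then immediate: by Lemma~\ref{chart} the $C^1$ distance of $f_{f^{n}x}$ to $\mathrm{diag}(a^1_\epsilon(f^{n}x),a^2_\epsilon(f^{n}x))$ is less than $\epsilon$, with $|a^i_\epsilon(f^{n}x)|<e^{\epsilon}e^{\chi_i(x)}$, while $\Vert d\Psi_{f^{n}x}\Vert\le 1/K$ and $\Vert d_y\Psi_x^{-1}\Vert\le A_\epsilon(x)$; composing, $\Vert d_yf^{n}\Vert\le K^{-1}A_\epsilon(x)(e^{\epsilon}e^{\chi_1(x)}+\epsilon)^{n}$, hence $\chi(y)\le\log(e^{\epsilon}e^{\chi_1(x)}+\epsilon)$, and letting $\epsilon\to0$ and using $\chi=\chi_1$ (Appendix~A) gives $\chi(y)\le\chi(x)$. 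For the lower bound I use a cone argument: since $\chi_1(x)>0>\chi_2(x)$, for $\epsilon$ small the cone $\mathcal C=\{(u_1,u_2):|u_2|\le|u_1|\}$ is invariant under every $d_{\tilde y_n}f_{f^{n}x}$, and for $v\in\mathcal C$ the first coordinate satisfies $|(d_{\tilde y_n}f_{f^{n}x}v)_1|\ge(e^{-\epsilon}e^{\chi_1(x)}-\sqrt2\,\epsilon)|v_1|$. Propagating the vector $e_1$, pushing the result back to $T_yM$ through the $\Psi$'s (using the other inequality of Lemma~\ref{chart}, $\Vert d_{\tilde y_n}\Psi_{f^{n}x}(w)\Vert\ge\Vert w\Vert/A_\epsilon(f^{n}x)$ with $A_\epsilon(f^{n}x)\le A_\epsilon(x)e^{n\epsilon}$), yields $\Vert d_yf^{n}\Vert\ge c\,(e^{-2\epsilon}e^{\chi_1(x)}-\sqrt2\,\epsilon e^{-\epsilon})^{n}$ for a constant $c>0$, so $\chi(y)\ge\log(e^{-2\epsilon}e^{\chi_1(x)}-\sqrt2\,\epsilon e^{-\epsilon})$, and again $\epsilon\to0$ gives $\chi(y)\ge\chi(x)$.

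The delicate point is the lower bound. An $\epsilon$-perturbation of a diagonal matrix in which both $e^{\chi_1(x)}$ and $e^{\chi_2(x)}$ may be close to $1$ need not expand any vector, so one genuinely needs the full hyperbolicity hypothesis $\chi_1(x)>0>\chi_2(x)$ (not just $\chi_1(x)>0$) together with a cone/invariant-direction argument; moreover one must estimate the growth of the coordinate $|v_1|$ along the orbit rather than the Euclidean norm directly, since a naive norm comparison loses a $\sqrt2$ per step and destroys the exponential rate. The remaining bookkeeping — the slow variation of $q_\epsilon$ and $A_\epsilon$ along the orbit, and the $\epsilon$-to-$0$ passage — is routine.
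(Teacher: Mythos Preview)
Your proof is correct and follows essentially the same approach as the paper: both use the Pesin charts of Lemma~\ref{chart} along the orbit of $x$, trap the forward orbit of $y$ in these charts using the slow variation of $q_\epsilon$, obtain the upper bound $\chi(y)\le\chi_1(x)$ by multiplying operator norms, and obtain the lower bound via an invariant cone around the unstable direction. The only cosmetic differences are that the paper works with a narrow cone $\mathcal C_\alpha$ and claims directly $\|d_zf_{f^kx}(v)\|\ge e^{\chi_1(x)-\delta}$ on the cone, whereas you use the unit cone and track the first coordinate; both are valid and lead to the same conclusion after letting $\epsilon\to0$.
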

\begin{proof}  Fix $x\in \mathcal R^{+*}$ and $\delta>0$. 
 We  apply Lemma \ref{chart} with $\epsilon\ll \chi_1(x) $.  For $\alpha>0$ we let $\mathcal C_\alpha$ be the cone
$\mathcal C_\alpha=\{(u,v)\in \mathbb R^2, \ \alpha\|u\|\geq \|v\|\}$. We may choose $\alpha>0$ and $\epsilon>0$ so small that for all $k\in \mathbb N$ we have   $d_zf_{f^kx}(\mathcal C_\alpha)\subset \mathcal C_\alpha$ and $\|d_zf_{f^kx}(v)\|\geq e^{\chi_1(x)-\delta}$ for all $v\in\mathcal C_\alpha$ and all $z\in B(0, q_\epsilon(f^kx))$. 

Let $y\in W^s(x)$. There is $C>0$ and $\lambda$ such that $d(f^nx,f^ny)<C\lambda^n$ holds for all $n\in \mathbb N$. We can choose $\epsilon\ll \lambda$. In particular there is  $N>0$ such that $f^ny$ belongs to $\Psi_{f^nx} B(0, q(f^nx))$ for $n\geq N$ since we have  $A(f^nx)<e^{\epsilon n}A(x)$ and $q(f^nx)>e^{\epsilon n}q(x)$. Let $z \in B(0,q(f^Nx))$ with $\Psi_{f^Nx}(z)=y$. Then for all $v\in \mathcal C_\alpha$ and for all $n\geq N$ we have  
 $\|d_z\left(\Psi_{f^{n-N}x}^{-1}\circ f^{n-N}\circ \Psi_{f^Nx} \right)(v)\|\geq e^{(n-N)(\chi_1(x)-\delta)}.$
 As the conorm  of $d_{f^{n-N}y}\psi_{f^nx}$ is bounded from above by  $A(f^n x)^{-1}$ for all $n$ we get 
\begin{align*}
\chi(y)&= \limsup_n\frac{1}{n}\log \|d_{y} f^{n-N}\|,\\
&=  \limsup_n\frac{1}{n}\log \|d_z \left(f^{n-N}\circ \Psi_{f^Nx}\right)\|,\\
& \geq \limsup_{n\rightarrow +\infty}\frac{1}{n}\log \left(  A(f^nx)^{-1}\left\|d_z\left(\Psi_{f^nx}^{-1}\circ f^n\circ \Psi_{f^Nx} \right) \right\|\right),\\
&\geq \chi_1(x)-\delta-\epsilon. 
\end{align*}
On the other hand  we have 
 \begin{align*}
 \left\|d_z\left(\Psi_{f^nx}^{-1}\circ f^n\circ \Psi_{f^Nx} \right) \right\|&\leq \prod_{k=N}^{n-1}\sup_{t\in B(0, q(f^kx))}\|d_{t}f_{f^kx}\|,\\
 &\leq \left(e^{\chi_1(x)+\epsilon}+\epsilon\right)^{n-N},\\
 &\leq e^{(n-N)(\chi_1(x)+2\epsilon)}. 
 \end{align*} 
Then it follows from  $\|d_{f^{n-N}y}\psi_{f^nx}\|\leq K$:
 
\begin{align*}
 \chi(y)&\leq  \limsup_{n\rightarrow +\infty}\frac{1}{n}\log \left(\left\|d_z\left(\Psi_{f^nx}^{-1}\circ f^n\circ \Psi_{f^Nx} \right) \right\|\right), \\
 &\leq \chi_1(x)+2\epsilon.
 \end{align*}
 
As it holds for arbitrarily small  $\epsilon$ and $\delta$ we get $\chi(y)=\chi_1(x)=\chi(x)$.

\end{proof}

We  conclude with $\Lambda=\{\chi(\mu_i), \ i\in I\}$ that for Lebesgue a.e. point $x$, we have $\chi(x)\in \left]-\infty, \frac{R(f)}{r}\right]\cup \Lambda$ and that $\{\chi=\lambda\}\stackrel{o}{\subset}\bigcup_{i\in I, \, \chi(\mu_i)=\lambda}\mathcal B(\mu_i)$ for all $\lambda\in \Lambda$. The proof of the Main Theorem is now complete. It follows also from Lemma \ref{fini}, that the converse statement of Corollary \ref{coco} holds : if $(f,M)$ admits a SRB measure then $\Leb(\chi>0)>0$.\\

In the  proof of Lemma \ref{fini} we can choose the cone $\mathcal C_\alpha$ to be contracting, so that any vector in a small cone at $y$ will converge to the unstable direction $E_u(x)$. In other terms if we endow the smooth manifold $\mathbb PTM$ with a smooth Riemanian structure, then the lift $\mu$ to the unstable bundle of an ergodic SRB measure $\nu$ is a physical measure of $(\mathbb PTM, F)$. Conversely if $\mu$ is a physical measure of $(\mathbb PTM, F)$ supported on the unstable bundle above $\mathcal R^{+*}$, we can reproduce the scheme of the proof of the Main Theorem to show $\mu$ projects to a SRB measure $\nu$. Indeed we may consider a $C^\infty$ smooth curve  $\sigma$ such that $\hat x=(x,v_x)$ lies in the basin of $\mu$ for $\hat x$ in a positive Lebesgue measure set $A$ of $\hat \sigma_*$. Then by following the above  construction  of SRB measures, we obtain that $\mu=\lim_{n}\frac{1}{n}\sum_{k=0}^{n-1}F_*^k\Leb_A$ project to  a SRB measure (or one can directly use the approach of \cite{bure}). This converse statement is very similar to a result of Tsujii (Theorem A in \cite{tsu}) which states in dimension two  that for $C^{1+}$ surface diffeomorphism  an ergodic hyperbolic measure $\nu$, such that the set of \textbf{regular} points $x\in \mathcal B(\nu)$ with $\chi(x)=\int \chi\, d\nu$ has positive Lebesgue measure, is a SRB measure.  Indeed if $\mu$ is a physical measure of $(\mathbb PTM, F)$ supported on the unstable bundle above $\mathcal R^{+*}$, its projection $\nu$ satisfies $\chi(\nu)=\chi(x)$ for any $x\in \pi(\mathcal B(\nu))$. In the present paper we are working  with the stronger $C^\infty$ assumption, but, in return, points in the basin are not supposed to be regular contrarily to Tsujii's theorem. 

From the above discussion, we may restate  the Main Theorem as follows:
\begin{theorem}
Let $f:M\circlearrowleft$ be a $C^\infty$ surface diffeomorphism and $F:\mathbb PTM$ be the induced map on the projective tangent bundle. Then the basins of the physical measures of $(\mathbb PTM,F)$ are covering Lebesgue almost everywhere the set $\{(x,v)\in \mathbb PTM, \ \chi(x,v)>0\}$.  
\end{theorem}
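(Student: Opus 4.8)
The plan is to deduce this restatement from the Main Theorem (in the strong form of Proposition~\ref{bass}) together with the contracting‑cone refinement of Lemma~\ref{fini} sketched just above. First I would pass from $\mathbb PTM$ to $M$. The Lebesgue measure on $\mathbb PTM$ is locally the product of $\Leb$ on $M$ with the arclength on the circle fibres; since $\chi(x,v)\le\chi_1(x)=\chi(x)$ with equality as soon as $v\notin V_2(x)$, and $\{v\in\mathbb PT_xM:\chi(x,v)<\chi_1(x)\}$ is at most one point of the fibre $\mathbb PT_xM\cong\mathbb P^1$ (it is a line of $T_xM$ when $p(x)=2$ and empty when $p(x)=1$), Fubini's theorem yields $\{(x,v):\chi(x,v)>0\}\stackrel{o}{=}\pi^{-1}\{\chi>0\}$. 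By the Main Theorem there are countably many ergodic SRB measures $(\mu_i)_{i\in I}$, and, writing $\Gamma_i=\mathcal B(\mu_i)\cap E_i\cap\mathcal R^{+*}$ and $\Gamma=\bigcup_i\Gamma_i$, Proposition~\ref{bass} gives $\{\chi>0\}\stackrel{o}{\subset}W^s(\Gamma)=\bigcup_i W^s(\Gamma_i)$. Hence it suffices to produce physical measures of $F$ whose basins cover $\bigcup_i\pi^{-1}(W^s(\Gamma_i))$ up to a Lebesgue null set of $\mathbb PTM$.

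For each $i$, let $\widehat\mu_i$ be the lift of $\mu_i$ to the unstable bundle, i.e. the pushforward of $\mu_i$ under the ($\mu_i$-a.e. defined) section $y\mapsto(y,E_u(y))$; since $E_u$ is $df$-invariant, $\widehat\mu_i\in\mathcal M(\mathbb PTM,F)$ and $\pi\widehat\mu_i=\mu_i$. The core of the proof is the claim that $\pi^{-1}(W^s(\Gamma_i))\stackrel{o}{\subset}\mathcal B(\widehat\mu_i)$ for each $i$. Fix $x\in W^s(x')$ with $x'\in\Gamma_i\subset\mathcal R^{+*}$. Running the argument of Lemma~\ref{fini}, but now choosing the cone $\mathcal C_\alpha$ around $E_u$ in the regular neighbourhoods $\Psi_{f^nx'}$ of Lemma~\ref{chart} strictly $F$-contracting, one checks that every line $v\in\mathbb PT_xM$ other than the single line $T_xW^s(x')$ eventually enters the image of $\mathcal C_\alpha$ and is then attracted to it, so that $\hat{\mathrm{d}}\!\left(F^n(x,v),(f^nx',E_u(f^nx'))\right)\to 0$. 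Since $x'\in\mathcal B(\mu_i)$, the empirical measures $\frac{1}{n}\sum_{k<n}\delta_{f^kx'}$ converge to $\mu_i$; a Pesin‑block argument — the proportion of $k<n$ with $f^kx'\notin K_N$ is eventually at most $\mu_i(M\setminus K_N)+o(1)$, and $y\mapsto(y,E_u(y))$ is continuous on each $K_N$ — then promotes this to $\frac{1}{n}\sum_{k<n}\delta_{F^k(x,v)}\to\widehat\mu_i$, i.e. $(x,v)\in\mathcal B(\widehat\mu_i)$. Thus $\mathcal B(\widehat\mu_i)$ contains the fibre of $\pi^{-1}(W^s(\Gamma_i))$ over each of its base points minus one line, and Fubini gives the claimed inclusion.

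Putting the pieces together, $\{(x,v):\chi(x,v)>0\}\stackrel{o}{=}\pi^{-1}\{\chi>0\}\stackrel{o}{\subset}\bigcup_i\pi^{-1}(W^s(\Gamma_i))\stackrel{o}{\subset}\bigcup_i\mathcal B(\widehat\mu_i)$. It remains to see that each relevant $\widehat\mu_i$ is a physical measure of $(\mathbb PTM,F)$. Since $\mu_i$ has absolutely continuous conditional measures along unstable manifolds and $\mu_i(\Gamma_i)=1$, the set $\Gamma_i$ meets $\mu_i$-a.e. unstable plaque in positive unstable‑Lebesgue measure, so by absolute continuity of the Pesin stable foliation $W^s(\Gamma_i)$ has positive Lebesgue measure in $M$; hence $\mathcal B(\widehat\mu_i)\stackrel{o}{\supset}\pi^{-1}(W^s(\Gamma_i))$ has positive Lebesgue measure in $\mathbb PTM$ and $\widehat\mu_i$ is physical. (If $\Leb(W^s(\Gamma_i))=0$ for some $i$, that index contributes a Lebesgue null subset of $\mathbb PTM$ and may simply be discarded.) This proves the theorem.

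The step I expect to be the main obstacle is the second half of the core claim: transferring the convergence of Birkhoff averages from the base orbit of $x'$ to the projective orbit of $(x,v)$. The shadowing $\hat{\mathrm{d}}(F^n(x,v),(f^nx',E_u(f^nx')))\to0$ does not by itself suffice, because the section $y\mapsto(y,E_u(y))$ is only Borel — continuous merely on the Pesin blocks $K_N$ — so one must control, uniformly in $n$, how much time the orbit of $x'$ spends high in the Pesin filtration and ensure the resulting empirical measures on $\mathbb PTM$ do not drift off the unstable bundle; this is precisely where the hypotheses $x'\in\mathcal R^{+*}$ and the tempered distortion of the charts $\Psi_{f^nx'}$ are genuinely used.
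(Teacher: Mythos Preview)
Your approach is essentially the paper's own: the theorem is presented there as a restatement of the Main Theorem, justified in one paragraph by the contracting-cone refinement of Lemma~\ref{fini} and the observation that the lift $\widehat\mu_i$ of each ergodic SRB $\mu_i$ to the unstable bundle is physical for $F$. Your Fubini reduction to $\pi^{-1}\{\chi>0\}$ and the structure of the argument are exactly what is intended.

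On the obstacle you flag: it is real, but there is a cheaper bypass than your Pesin-block argument. The stronger form of Proposition~\ref{bass} holds for \emph{any} $f$-invariant $\Gamma=\bigcup_i\Gamma_i$ with $\mu_i(\Gamma_i)=1$, so simply enlarge the definition of $\Gamma_i$ to include the condition $(x',E_u(x'))\in\mathcal B(\widehat\mu_i)$. This condition is $f$-invariant and has full $\mu_i$-measure because $\widehat\mu_i$ is ergodic (pushforward of the ergodic $\mu_i$ by an invariant Borel section) and hence $\widehat\mu_i(\mathcal B(\widehat\mu_i))=1$ by Birkhoff. With this choice of $\Gamma_i$, the shadowing $\hat{\mathrm d}\bigl(F^n(x,v),F^n(x',E_u(x'))\bigr)\to0$ transfers genericity to $(x,v)$ by uniform continuity of test functions on $\mathbb PTM$, and the measurability of $y\mapsto E_u(y)$ never enters. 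Your Pesin-block route can also be completed (choose $K_N$ with $\mu_i(\partial K_N)=0$ so that the visit frequencies actually converge, then Tietze-extend $\psi(\cdot,E_u(\cdot))$ from $K_N$), but it is not needed.
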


\section{Nonpositive  exponent in  contracting sets}\label{gui}

In this last section  we show Theorem \ref{yoyo}. For a dynamical system $(M,f)$ a subset $U$ of $M$ is said \textbf{almost contracting} when for all $\epsilon>0$ the set $E_\epsilon=\{k\in \mathbb N, \ \diam(f^k U)>\epsilon\}$ satisfies $\overline{d}(E_\epsilon)=0$. 
In \cite{gui} the authors build subsets with historic behaviour and positive Lebesgue measure which are almost  contracting but not contracting.
We will show Theorem \ref{yoyo} for almost contracting sets. 
\\

We  borrow the next  lemma  from \cite{bure} (Lemma 4 therein), which may be stated with the notations of Section \ref{srbb} as follows :
\begin{lem}\label{dav} Let  $f:M\circlearrowleft$ be a $C^\infty$ diffeomorphism admitting and let    $U$ be a subset of $M$ with $\Leb\left(\left\{\chi>a\right\}\cap U\right)>0$ for some $a>0$.
 Then for all $\gamma>0$  there is a $C^\infty$ smooth embedded curve  $\sigma_*$ 
and $I\subset \mathbb N$ with $\sharp I=\infty$ such that
\[\forall n\in I, \ \left| \{x\in U\cap \sigma_*, \, \| d_xf^n(v_x)\|>e^{na} \} \right|>e^{-n\gamma}.\]
 \end{lem}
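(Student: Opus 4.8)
The plan is a soft argument in two steps: a Fubini reduction to a curve, then an elementary Borel--Cantelli estimate. For the first step, set $W:=\{\chi>a\}\cap U$, so $\Leb(W)>0$. Since $\chi=\chi_1$ and $\chi(x,v)=\chi_1(x)$ for every direction $v$ lying outside the proper subspace $V_2(x)\subsetneq T_xM$ (and for every $v$ when $x$ carries a single exponent), at each $x\in W$ one has $\chi(x,v)>a$ for Lebesgue-a.e.\ direction $v$. Fix a Lebesgue density point $x^*$ of $W$, work in a chart, and for a direction $\phi$ let $\mathcal L_\phi$ be the affine foliation of a small box $Q\ni x^*$ by segments of direction $\phi$. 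As $(x,v)\mapsto\chi(x,v)$ is Borel (a $\limsup$ of continuous functions), Fubini in the variables $(x,\phi)$ over $(W\cap Q)\times\{\text{directions}\}$ shows that for Lebesgue-a.e.\ $\phi$ the set $\{x\in W\cap Q:\chi(x,\phi)\le a\}$ is Lebesgue-null. Fixing such a $\phi_0$ and applying Fubini a second time along $\mathcal L_{\phi_0}$ — together with $\Leb(W\cap Q)>0$ — produces a leaf $\sigma$, an affine and hence $C^\infty$ embedded segment with tangent direction $v_x\equiv\phi_0$, such that $\Leb_{\sigma_*}\big(\{x\in W\cap\sigma_*:\chi(x,v_x)>a\}\big)>0$. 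Put $A:=\{x\in U\cap\sigma_*:\chi(x,v_x)>a\}$; then $\Leb_{\sigma_*}(A)>0$.

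For the second step, fix $\gamma>0$ and let $B_n:=\{x\in U\cap\sigma_*:\|d_xf^n(v_x)\|>e^{na}\}$, $b_n:=\Leb_{\sigma_*}(B_n)$. By the definition of $A$, for every $x\in A$ one has $\limsup_n\tfrac1n\log\|d_xf^n(v_x)\|>a$, so $x\in B_n$ for infinitely many $n$; hence $A\subset\limsup_n B_n=\bigcap_N\bigcup_{n\ge N}B_n$, and in particular $A\subset\bigcup_{n\ge N}B_n$ for every $N$, whence $\sum_{n\ge N}b_n\ge\Leb_{\sigma_*}\big(\bigcup_{n\ge N}B_n\big)\ge\Leb_{\sigma_*}(A)>0$ for all $N$. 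If $b_n\le e^{-n\gamma}$ held for all large $n$, the tails $\sum_{n\ge N}b_n$ would converge to $0$ as $N\to\infty$, contradicting the previous line. Therefore $b_n>e^{-n\gamma}$ for infinitely many $n$, and one takes $I$ to be this infinite set of integers; this is exactly the asserted inequality.

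The statement is essentially soft, and the $C^\infty$ hypothesis is used nowhere (it is retained only for coherence with the surrounding discussion). The single point calling for care is the pair of Fubini applications in the first step — the standard device of \cite{bur} — which rest on the Borel measurability of $(x,v)\mapsto\chi(x,v)$ and on the fact that $\{v:\chi(x,v)<\chi_1(x)\}=V_2(x)$ is a proper subspace, hence of zero angular measure, for $x\in W$.
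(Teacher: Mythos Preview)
Your proof is correct. The paper does not reprove this lemma but borrows it from \cite{bure} (and invokes the same Fubini device from \cite{bur} elsewhere, in Section~\ref{building}); your two-step argument --- Fubini over directions and then along an affine foliation to locate a curve $\sigma_*$ with $\Leb_{\sigma_*}(A)>0$, followed by the Borel--Cantelli estimate $\sum_{n\ge N}b_n\ge \Leb_{\sigma_*}(A)>0$ --- is exactly the standard proof those references carry, so there is nothing to add.
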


We are now in a position to prove Theorem \ref{yoyo} for almost contracting sets. 
\begin{proof}[Proof of Theorem \ref{yoyo}]
 We argue by contradiction by assuming $\Leb\left(\left\{\chi>a\right\}\cap U\right)>0$ for some $a>0$ with $U$ being a almost  contracting set. By Yomdin's Theorem  on one-dimensional local volume growth for $C^\infty$ dynamical systems \cite{yom} there is  $\epsilon>0$ so small that 
\begin{equation}\label{volu} v^*(f,\epsilon)=\sup_{\sigma}\limsup_{n\to\infty}\frac{1}{n}\sup_{x\in M}\log \left|f^n(B(x,\epsilon,n)\cap \sigma_*\right|<a/2,
\end{equation} where the supremum holds over all $C^{\infty}$ smooth embedded curves $\sigma:[0,1]\rightarrow M$. 
As $U$ is almost contracting, there are  subsets $(C_n)_{n\in \mathbb N}$ of $M$ with $\lim_n\frac{\log \sharp C_n}{n}=0$ satisfying for all $n$ 
\begin{equation}\label{rat}
U\subset \bigcup_{x\in C_n}B(x,\epsilon,n).
\end{equation}

Fix an error term  $\gamma\in ]0,\frac{a}{2}[$. Then  by Lemma \ref{dav} there is a $C^\infty$-smooth curve $\sigma_*\subset U$ and an infinite subset $I$  of $\mathbb N$ such that  for all $n\in I$
\begin{eqnarray*}
\sum_{x\in C_n}\left| f^n(B(x,\epsilon,n)\cap \sigma_*)\right|&\geq &\left| f^n(U\cap \sigma_*)\right|,\\ 
&\geq & e^{na}\left| \{x\in U\cap\sigma_*, \, \| d_xf^n(v_x)\|>e^{na} \}\right|,\\
&\geq & e^{n(a-\gamma)} \text{ by    (\ref{volu}),}\\
\sharp C_n \sup_{x\in M}\log \left|f^n(B(x,\epsilon,n)\cap \sigma_*)\right| & \geq & e^{n(a-\gamma)} \text{ by    (\ref{rat}).}\\
\end{eqnarray*}
Therefore we get the contradiction $v^*(f,\epsilon)>a-\gamma>a/2$.
\end{proof}

\appendix
\section{}
Let $\mathcal A=(A_n)_{n\in \mathbb N}$ be a sequence in $M_d(\mathbb R^d)$.  
For any $n\in \mathbb{N}$ we let $A^n=A_{n-1}\cdots A_1A_0$. We define the  Lyapunov exponent $\chi(\mathcal A)$ of $\mathcal{A}$ with respect to  $v\in \mathbb{R}^d\setminus \{0\}$ as 
$$\chi(\mathcal A,v):=\limsup_n\frac{1}{n}\log \|A^n(v)\|,$$

\begin{lem}
$$\sup_{v\in \mathbb{R}^d\setminus \{0\}}\chi(\mathcal A,v)=\limsup_n\frac{1}{n}\log \vertiii{A^n}.$$
\end{lem}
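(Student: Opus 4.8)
The plan is to prove the two inequalities separately. For the easy inequality $\sup_{v\neq 0}\chi(\mathcal A,v)\le \limsup_n\frac1n\log\vertiii{A^n}$, I would use only the operator norm bound $\|A^n(v)\|\le \vertiii{A^n}\,\|v\|$, valid for every $v\neq 0$ and every $n$. Dividing by $n$, taking logarithms and then the $\limsup$ in $n$ gives $\chi(\mathcal A,v)\le \limsup_n\frac1n\log\vertiii{A^n}$ for each fixed $v$ (the term $\frac1n\log\|v\|$ vanishes in the limit), and taking the supremum over $v$ yields the claim. This direction requires nothing beyond the definitions.

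For the reverse inequality, set $\Lambda:=\limsup_n\frac1n\log\vertiii{A^n}$. If $\Lambda=-\infty$ there is nothing to prove (the easy direction already forces $\sup_v\chi(\mathcal A,v)=-\infty$), so assume $\Lambda>-\infty$, possibly $\Lambda=+\infty$. I would first pick a subsequence $(n_k)_k$ along which $\frac1{n_k}\log\vertiii{A^{n_k}}\to\Lambda$; in particular $\vertiii{A^{n_k}}>0$ for $k$ large. Since $\vertiii{A^{n_k}}=\max_{\|w\|=1}\|A^{n_k}(w)\|$ is attained, choose a unit vector $v_k$ with $\|A^{n_k}(v_k)\|=\vertiii{A^{n_k}}$, and, using compactness of the unit sphere of $\mathbb R^d$, pass to a further subsequence so that $v_k\to v$ for some unit vector $v$. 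The point is that this single limiting vector $v$ already realizes the exponent $\Lambda$.

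To see this I would estimate, via the reverse triangle inequality and the operator norm bound,
\[
\|A^{n_k}(v)\|\ \ge\ \|A^{n_k}(v_k)\|-\|A^{n_k}(v_k-v)\|\ \ge\ \vertiii{A^{n_k}}\bigl(1-\|v_k-v\|\bigr).
\]
Since $\|v_k-v\|\to 0$, for all large $k$ we get $\|A^{n_k}(v)\|\ge \tfrac12\vertiii{A^{n_k}}$, hence
\[
\frac1{n_k}\log\|A^{n_k}(v)\|\ \ge\ \frac1{n_k}\log\vertiii{A^{n_k}}-\frac{\log 2}{n_k}\ \xrightarrow[k\to\infty]{}\ \Lambda .
\]
Therefore $\chi(\mathcal A,v)=\limsup_n\frac1n\log\|A^n(v)\|\ge\Lambda$, so $\sup_{v\neq 0}\chi(\mathcal A,v)\ge\Lambda$; combined with the easy direction this gives equality (and shows the supremum is in fact attained).

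I do not expect a serious obstacle. The only mildly delicate point is that the $\limsup$ defining $\chi(\mathcal A,v)$ runs over all $n$ while the lower bound is controlled only along the subsequence $(n_k)$, which is harmless since $\limsup_n a_n\ge\limsup_k a_{n_k}$. If one prefers to avoid the sphere-compactness step, an equivalent route is to fix a basis $e_1,\dots,e_d$, use $\vertiii{A^n}\le d\max_i\|A^n(e_i)\|$, and apply the pigeonhole principle along $(n_k)$ to extract a fixed basis vector $e_i$ with $\frac1{n_{k_j}}\log\|A^{n_{k_j}}(e_i)\|\to\Lambda$ along a sub-subsequence, concluding as above.
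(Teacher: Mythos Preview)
Your proof is correct and follows essentially the same approach as the paper: both dismiss the $\leq$ direction as obvious from the operator norm bound, then for $\geq$ pick unit vectors $v_k$ realizing $\vertiii{A^{n_k}}$ along a subsequence achieving the $\limsup$, extract a convergent subsequence $v_k\to v$ by compactness of the sphere, and use the reverse triangle inequality estimate $\|A^{n_k}(v)\|\ge\vertiii{A^{n_k}}(1-\|v_k-v\|)$ to conclude. Your version is slightly more careful (handling $\Lambda=-\infty$ and noting that a lower bound along a subsequence suffices for the $\limsup$), but the core argument is the same.
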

\begin{proof}The inequality $\leq $ is obvious. Let us show the other inequality. Let $v_n\in \mathbb{R}^d$ 
with $\|v_n\|=1$ and  $\|A^n(v_n)\|= \vertiii{A^n}$. Then take $v=\lim_{k}v_{n_k}$ with $\lim_{k}\frac{1}{n_k}\log\vertiii{A^{n_k}}=\limsup_n\frac{1}{n}\log \vertiii{A^n}$. We get 
\begin{align*}
\|A^{n_k}(v)\|&\geq \|A^{n_k}(v_k)\|- \|A^{n_k}(v-v_k)\|,\\
&\geq \vertiii{A^{n_k}}(1-\|v-v_k\|),\\
\limsup_k\frac{1}{n_k}\log\|A^{n_k}(v)\|&\geq \limsup_n\frac{1}{n}\log \vertiii{A^n}.
\end{align*}
\end{proof}
%\section{$C^r$ counterexample}

%\begin{rem}\label{last}In \cite{Ohn} the author proved that if $\chi(x)<0$ then there are positive  constants $R$, $C$ and $\alpha$ such that for any $y$ with $\|y-x\|<R$ and any $n\in \mathbb N$ we have $\|f^nx-f^ny\|\leq Ce^{-n\alpha}\|x-y\|$. Then either $x$ is a non wandering point or there is $N$ and a ball around $x$ such that $f^N$ is constracting on $B$ with $f^N(B)\subset B$. This implies $x$ lies in the basin of  a periodic sink $p$. As we assume $x$  nonwandering, we have necessarily $x=p$.\end{rem}

%Question cas $C^r$ avec supess de \chi plus grand que R(f)+R(f^{-1})/r-1

\end{large}

\end{document}